\newtheorem{remark}[theorem]{Remark}
\def\cP{\mathcal{P}}
\def\R{\mathbb R}
\renewcommand{\div}{{\rm div}}
\def\bsigma{\boldsymbol{\sigma}}
\def\cA{\mathcal {A}}
\def\mS{\mathbb S}
\def\mK{\mathbb K}
\def\btau{\boldsymbol{\tau}}
\def\cT{\mathcal{T}}
\def\cF{\mathcal{F}}
\def\bphi{\boldsymbol{\phi}}
\def\bzero{\boldsymbol{0}}
\def\mV{\mathbb V}
\def\mT{\mathbb T}
\def\mF{\mathbb F}
\def\btheta{\boldsymbol{\theta}}
\def\cO{\mathcal O}
\newcommand{\tr}{{\rm tr}}
\def\bI{\boldsymbol{I}}
\newcommand\re{{\mathrm{e}}}
\def\bepsilon{\boldsymbol{\epsilon}}
\newcommand{\vertiii}[1]{{\left\vert\kern-0.25ex\left\vert\kern-0.25ex\left\vert
#1 \right\vert\kern-0.25ex\right\vert\kern-0.25ex\right\vert}}
\title{Interior Penalty Mixed Finite Element Methods of Any Order in
Any Dimension for Linear Elasticity with Strongly Symmetric Stress
Tensor 
\thanks{
The work of the first and third authors was supported in part by the
DOE Grant DE-SC0009249 as part of the Collaboratory on Mathematics
for Mesoscopic Modeling of Materials and by DOE Grant DE-SC0014400
and NSF Grant DMS-1522615.
The work of the second and third authors was supported in part by
National Natural Science Foundation of China (NSFC) (Grant
No.91430215, 41390452) and by Beijing International Center for
Mathematical Research of Peking University, China. }
} 
\author{ 
Shuonan Wu\thanks{Department of Mathematics, The Pennsylvania
State University, University Park, PA, 16802, USA
(sxw58@psu.edu).} \and 
Shihua Gong\thanks{Beijing International Center for
Mathematical Research, Peking University, Beijing 100871, P. R.
China (gongshihua@pku.edu.cn).} \and  
Jinchao Xu\thanks{Corresponding author. Department of Mathematics, The
Pennsylvania State University, University Park, PA 16802, USA
(xu@math.psu.edu).}  
}
\begin{document}
\maketitle
\slugger{mms}{xxxx}{xx}{x}{x--x}

\begin{abstract}
We propose two classes of mixed finite elements for linear elasticity
of any order, with interior penalty for nonconforming symmetric stress
approximation. One key point of our method is to introduce some
appropriate nonconforming face-bubble spaces based on the local
decomposition of discrete symmetric tensors, with which the stability
can be easily established. We prove the optimal error estimate for
both displacement and stress by adding an interior penalty term. The
elements are easy to be implemented thanks to the explicit
formulations of its basis functions. Moreover, the methods can be
applied to arbitrary simplicial grids for any spatial dimension in a
unified fashion. Numerical tests for both 2D and 3D are provided to
validate our theoretical results.
\end{abstract}

\begin{keywords}
mixed method, elasticity, strongly symmetric tensor, interior penalty
\end{keywords}

\begin{AMS}
65N30, 65N15, 74B05
\end{AMS}

\pagestyle{myheadings}
\thispagestyle{plain}
\markboth{IP MIXED SYMMETRIC STRESS ELEMENTS FOR ELASTICITY}{S. WU, S.
GONG, AND J. Xu}

\section{Introduction} \label{sec:intro}

Mixed finite element methods for linear elasticity are popular methods
to approximate the stress-displacement system derived from
Hellinger-Reissner variational principle. However, it is more
difficult to develop the stable mixed finite element methods for
linear elasticity than that for scalar second-order elliptic problems,
as the stress tensor is required to be symmetric due to the
conservation of angular momentum. One way to circumvent this
difficulty is to use composite element techniques
\cite{johnson1978some, arnold1984family}.  Another approach is to use
some well-known $H(\div)$ elements to relax the symmetry. One main
technique is to introduce a Lagrange multiplier approximating the
non-symmetric part of the displacement gradient while enforcing stress
symmetry weakly \cite{amara1979equilibrium, arnold2007mixed,
boffi2009reduced, cockburn2010new, farhloul1997dual, qiu2009mixed,
gopalakrishnan2012second}.

The first stable non-composite finite element method for classical
mixed finite formulation of plane elasticity was proposed by Arnold
and Winther in 2002 \cite{arnold2002mixed}. In this class of
elements, the displacement is discretized by discontinuous piecewise
$\cP_k$ ($k\geq 1$) polynomial, while the stress is discretized by the
conforming $\cP_{k+2}$ tensors whose divergence is $\cP_k$ vector on
each triangle. The analogue of the results in 3D case were reported in
\cite{adams2005mixed, arnold2008finite}. All the results in this
series have some features in common: the degree of polynomial for the
displacement should satisfy $k\geq 1$. The similar idea can be applied
to the rectangular element, see \cite{arnold2005rectangular,
chen2011conforming, hu2014simple}.

Recently Hu and Zhang \cite{hu2014family, hu2015family} and Hu
\cite{hu2015finite} proposed a family of conforming mixed elements for
$\R^n$ that have fewer degrees of freedom than those in the earlier
literature.  For $k \geq n$, this class of elements are optimal in the
sense that the displacement is discretized by discontinuous piecewise
$\cP_k$ polynomial, while the stress is discretized by the conforming
$\cP_{k+1}$ tensors.  These elements also admit a unified theory and
a relatively easy implementation.  For the case that $k \leq n-1$, the
symmetric tensor spaces are enriched by proper high order $H(\div)$
bubble functions to stabilize the discretization \cite{hu2014finite}.
Similar mixed elements on rectangular and cuboid grids were
constructed in \cite{hu2015new}.

There have been also numerous works in the literature on nonconforming
mixed elements.  For rectangular or cuboid grids, we refer to
\cite{yi2005nonconforming, yi2006new, hu2007lower, awanou2009rotated,
man2009lower}.  For simplicial grids, we first refer Arnold and
Winther \cite{arnold2003nonconforming} (2D) and
\cite{arnold2014nonconforming} (3D).  These elements contain the
displacement space with $k=1$, but it is suboptimal as only the first
order accuracy can be proved for the displacement. In
\cite{gopalakrishnan2011symmetric}, Gopalakrishnan and Guzm\'{a}n
developed a family of simplicial elements for $k\geq 1$ in both two
and three dimensions. The optimal convergence order for the
displacement can be proved under the full elliptic regularity
assumption but the convergence order of $L^2$ error for stress is
still suboptimal.

All the aforementioned simplicial elements have the constraint that
$k\ge 1$.  For the lowest order case $k=0$ in 2D, Cai and Ye
\cite{cai2005mixed} used the Crouzeix-Raviart element to approximate
each component of the symmetric stress and piecewise constants for the
displacement. Their method was proved to be convergent by adding an
interior penalty term to weakly enforce the continuity of the stress.
As the authors claimed, their elements can be extended to higher
spatial dimensions, but it is not clear how the elements can be
extended to higher orders.

The purpose of this paper is to construct a family of mixed finite
elements ($k\geq 0$) for simplicial grids in any dimension. Precisely,
the piecewise $\cP_k$ vector space without interelement continuity is
applied to approximate the displacement.  To design the piecewise
$\cP_{k+1}$ spaces for the stress, the crucial point is to introduce
the conforming div-bubble spaces \cite{hu2015finite} and {\it
nonconforming face-bubble spaces}, with which the stability can easily
be established. We then add the spaces with two classes of spaces to
obtain the desired approximation property. The first class is locally
defined with elementwise degrees of freedom, while the second class
does not have local d.o.f.  but has a very small dimension. Any space
between these two classes can be proved to be convergent. Especially,
the finite element space proposed in \cite{cai2005mixed} in lowest
order lies in this case. Moreover, our first class of space is
precisely the space proposed in \cite{gopalakrishnan2011symmetric}
when $k\geq 1$, while the d.o.f are slightly different.

Due to the discontinuity of the normal stress on each interior face,
the stress-displacement mixed formulation is modified by adding an
interior penalty term to weakly enforce the continuity, which is a
standard technique for discontinuous Galerkin methods and also adopted
in \cite{cai2005mixed}. The convergence of our mixed finite element
method is studied according to the three ingredients step by step:
stability, approximation and consistency, with which a constructive
proof can be obtained naturally. More importantly, based on our
knowledge, our second class of spaces in lowest order has the smallest
dimension among all the mixed finite elements on simplicial grids
regardless whether the symmetry of stress is imposed strongly or
weakly.

The rest of the paper is organized as follows. In the next section, we
present the local decomposition of discrete symmetric tensors. In
section \ref{sec:fem}, we define two classes of finite element spaces
for symmetric tensors in any space dimension from the perspectives of
both stability and approximation property. In section
\ref{sec:IP-mixed}, the interior penalty mixed finite element method
is proposed, and its well-posedness and error analysis are given
subsequently. We then discuss the reduced elements in section
\ref{sec:reduced} and prove that the nonconforming elements have to be
applied in our framework when $k \leq n-1$. Numerical tests in both 2D
and 3D case will be given in section \ref{sec:numerical} and the
concluding remarks will then arrive to close the main text.

\section{Local Decomposition of Discrete Symmetric Tensors}
\label{sec:preliminary}

In this paper, we consider the following linear elasticity problem
with Dirichlet boundary condition  
\begin{equation} \label{equ:elasticity}
\left\{
\begin{aligned}
\cA \bsigma - \bepsilon(u) &= 0 \qquad \text{in~}\Omega, \\
\div \bsigma &= f \qquad \text{in~} \Omega, \\
u &= 0 \qquad \text{on~} \partial\Omega, 
\end{aligned}
\right.
\end{equation}
where $\Omega \subset \R^n$. The displacement and stress are denoted
by $u: \Omega \mapsto \R^n$ and $\bsigma: \Omega \mapsto \mS$,
respectively. Here, $\mS$ represents the space of real symmetric
matrices of order $n \times n$. The compliance tensor $\cA: \mS
\mapsto \mS$ is assumed to be bounded and symmetric positive definite.
The linearized strain tensor is denoted by $\bepsilon(u) = (\nabla u +
(\nabla u)^T)/2$.

The mixed formulation of \eqref{equ:elasticity} is to find $(\bsigma,
u) \in \Sigma \times V := H(\div, \Omega; \mS) \times L^2(\Omega;
\R^n)$, such that 
\begin{equation} \label{equ:mixed-formulation}
\left\{
\begin{aligned}
&(\cA \bsigma, \btau)_{\Omega} + (\div \btau, u)_{\Omega} &=&~ 0 \quad
&\forall \btau \in \Sigma, \\
&(\div \bsigma, v)_{\Omega} &=&~ (f, v)_{\Omega} \quad &\forall v \in
V.
\end{aligned}
\right.
\end{equation}
Here $H(\div, \Omega; \mS)$ consists of square-integrable symmetric
matrix fields with square-integrable divergence. The corresponding
$H(\div)$ norm is defined by 
\[
\|\btau\|_{\div, \Omega}^2 := \|\btau\|_{0, \Omega}^2 + \|\div
\btau\|_{0, \Omega}^2, \quad \forall \btau \in H(\div, \Omega; \mS).
\]
The $L^2(\Omega; \R^n)$ is the space of vector-valued functions which
are square-integrable with the standard $L^2$ norm.

Throughout this paper, we shall use letter $C$ to denote a generic
positive constant independent of $h$ which may stand for different
values at its different occurrences. The notation $x \lesssim y$ means
$x \leq Cy$ and $x \simeq y$ means $x \lesssim y \lesssim x$.

\subsection{Preliminaries}
Suppose that the domain $\Omega$ is subdivided by a family of shape
regular simplicial grids $\cT_h = \{K\}$. Let $h_K$ be the diameter of
element $K$, $h = \max_K h_K$ be the mesh diameter of $\cT_h$.
The set of all faces of $\cT_h$ is denoted by $\cF_h = \{F\}$ with
the diameter $h_F$ for face $F$. The set of faces can be divided into
two parts: the boundary faces set $\cF_h^{\partial} = \cF_h \cap
\partial\Omega$, and the interior faces set $\cF_h^i = \cF_h \setminus
\cF_h^{\partial}$. For any $F \in \cF_h$, the set of all elements
that share the face $F$ is denoted by $\cT_{h,F}$. The unit normal
vector with respect the face $F$ is represented by $\nu_F$.

Let $F \in \cF_h^i$ be the common face of two elements $K^+$ and
$K^-$, and $\nu_F^+$ and $\nu_F^-$ be the unit outward normal vectors
on $F$ with respect to $K^+$ and $K^-$, respectively. Then we define
the jump $[\cdot]$ on $F \in \cF_h^i$ for $\btau: \Omega \mapsto \mS$
by 
\begin{equation} \label{equ:jump}
[\btau] := \btau^+ \nu_F^+ + \btau^- \nu_F^-.
\end{equation}

For a given simplex $K$, its vertices are denoted by $a_1, \cdots,
a_{n+1}$. The face that does not contain the vertex $a_i$ is denoted
by $F_i$. The barycentric coordinates with respect to $K$ are
represented by $\lambda_1(x), \cdots, \lambda_{n+1}(x)$.  For any edge
$e_{ij} = a_j - a_i$ of element $K$, $i\neq j$, let $t_{ij}$ be the
unit tangent vectors along this edge, namely 
\[
t_{ij} := \frac{a_j - a_i}{|a_j - a_i|} = \frac{a_j -
  a_i}{|e_{ij}|}.
\]
Then we have the following important result describing the
relationship between the simplex $K$ and $\mS$. 

\begin{lemma} \label{lem:basis-S}
The symmetric tensors $\{t_{ij}t_{ij}^T, ~\forall i<j\}$ form a basis
of $\mS$.
\end{lemma}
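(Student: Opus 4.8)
The plan is to exploit the equality of dimensions. The space $\mS$ of real symmetric $n\times n$ matrices has dimension $n(n+1)/2$, whereas the number of edges $e_{ij}$ with $i<j$ on a simplex with $n+1$ vertices is $\binom{n+1}{2} = n(n+1)/2$. Since the cardinality of the proposed set matches $\dim\mS$ exactly, it suffices to prove that the rank-one symmetric matrices $\{t_{ij}t_{ij}^T : i<j\}$ are linearly independent; spanning then follows automatically.

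Rather than manipulating a vanishing linear combination directly, I would pass to the dual formulation via the Frobenius inner product on $\mS$, which is nondegenerate. For $\btau \in \mS$ one has $\langle t_{ij}t_{ij}^T, \btau\rangle = t_{ij}^T \btau t_{ij}$, so linear independence of $\{t_{ij}t_{ij}^T\}$ is equivalent to injectivity of the evaluation map $L : \mS \to \R^{n(n+1)/2}$, $L(\btau) := (t_{ij}^T \btau t_{ij})_{i<j}$ (both spaces having the same dimension, injectivity, surjectivity and bijectivity coincide). Thus the whole statement reduces to the implication: if the quadratic form $q(x) := x^T \btau x$ vanishes on every edge vector $e_{ij} = a_j - a_i$, then $\btau = \bzero$.

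To prove this implication, fix the vertex $a_1$ and set $v_k := a_{k+1} - a_1$ for $k = 1, \dots, n$; since $K$ is a nondegenerate simplex these vectors form a basis of $\R^n$. Writing $b(x,y) := x^T \btau y$ for the associated symmetric bilinear form, the hypothesis $q(e_{1,k+1}) = 0$ gives $b(v_k, v_k) = 0$. The key observation is that the remaining edge $a_{k+1}-a_{l+1}$ equals $v_k - v_l$, so $q(v_k - v_l) = 0$ as well; expanding by bilinearity and using $b(v_k,v_k)=b(v_l,v_l)=0$ yields $b(v_k, v_l) = 0$ for all $k \ne l$. Hence $b$ vanishes on every pair drawn from a basis of $\R^n$, so $b \equiv 0$ and therefore $\btau = \bzero$, which completes the argument.

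The computations here are entirely elementary, so there is no genuinely hard step; the only point requiring care is the choice of how to package the argument. The cleanest route is the reduction to the quadratic-form statement, together with the combinatorial remark that differences of the edge vectors $v_k$ emanating from a single vertex are themselves edge vectors of $K$ --- this is what lets the off-diagonal terms $b(v_k,v_l)$ be eliminated using only the hypothesis $q(e_{ij})=0$, and it is precisely where using all $\binom{n+1}{2}$ edges (rather than just the $n$ edges at $a_1$) becomes essential.
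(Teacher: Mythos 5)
Your proof is correct. The paper itself gives no argument here---it simply cites Lemma 2.1 of Hu (2015)---and the proof there is essentially the one you give: a dimension count ($\binom{n+1}{2}=\dim\mS$) reduces the claim to showing that a symmetric $\btau$ with $t_{ij}^T\btau t_{ij}=0$ for all $i<j$ must vanish, which follows by polarization since the edge vectors emanating from one vertex form a basis and their pairwise differences are again edge vectors. So your write-up supplies, correctly and in the standard way, the proof the paper outsources.
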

\begin{proof}
See Lemma 2.1 in \cite{hu2015finite}.
\end{proof}

These symmetric matrices $t_{ij}t_{ij}^T$ of rank one are the basis
ingredients when constructing the finite elements for the symmetric
stress tensors. One of the most commonly used properties of these
basis functions is 
\begin{equation} \label{equ:basis-normal}
t_{ij} t_{ij}^T \nu_{F_s} = 0 \iff \forall s \neq i, j.
\end{equation}

When applying the standard $\cP_{k+1}$ Lagrangian element for each
entry of the symmetric tensors, we obtain the following
$\cP_{k+1}(\mS)$ Lagrangian element:
\begin{equation} \label{equ:lagrangian}
\Sigma_{k+1,h}^c := \{\btau\in H^1(\Omega; \mS) ~|~ \btau|_K \in
\cP_{k+1}(K; \mS)\}.
\end{equation}
Collect all the face-bubble functions in $\Sigma_{k+1,h}^c$, we have
the following $H^1(\mS)$ face-bubble function space 
\begin{equation} \label{equ:H1-face-bubble}
\Sigma_{k+1,h,f}^c := \left\{ \btau \in H^1(\Omega; \mS) ~|~ \btau|_K
\in \sum_{i=1}^{n+1} (\prod_{j=1,j\neq i}^{n+1} \lambda_j)
  \cP_{k+1-n}(K;\mS) \right\}.
\end{equation}
Here we define $\cP_{m}(K) = \{0\}$ if $m<0$. Clearly, the $H^1(\mS)$
face-bubble function space is nonempty only when $k\geq n-1$.

\subsection{Local decomposition of polynomial spaces}
In this subsection, we introduce some polynomial spaces and discuss
their relationships. 

\subsubsection{Some polynomial spaces in simplex $K$}
We first give the following lemma that simplifies the reader's
understanding.
\begin{lemma}\label{lem:polynomial-basis}
Suppose $\{\psi_0, \cdots, \psi_q\}(q\geq 0)$ are linearly
independent, and $\{\psi_l^k, k=0,1, \cdots \}$ are independent for
$1\leq l \leq q$. Then for any $k \geq 0$, 
$$ 
\left\{ \prod_{l=0}^q \psi_l^{m_l}, \sum_{l=0}^q m_l = k 
\right\} \text{~are linearly independent functions}.
$$ 
\end{lemma}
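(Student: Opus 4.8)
The plan is to induct on the number of functions $q$. When $q=0$ the family in question reduces to the single function $\psi_0^k$; since $\{\psi_0\}$ is linearly independent we have $\psi_0\not\equiv 0$, hence $\psi_0^k\not\equiv 0$, and a one-element family consisting of a nonzero function is linearly independent. This settles the base case.

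For the inductive step I would assume the claim for any $q-1$ functions (and every degree) and start from a vanishing combination
\[
\sum_{m_0+\cdots+m_q=k} c_{m_0,\dots,m_q}\,\prod_{l=0}^{q}\psi_l^{m_l}=0 .
\]
Collecting terms by the exponent $j:=m_q$ of the last function rewrites this as $\sum_{j=0}^{k}\psi_q^{\,j}\,Q_j=0$, where
\[
Q_j:=\sum_{m_0+\cdots+m_{q-1}=k-j} c_{m_0,\dots,m_{q-1},j}\,\prod_{l=0}^{q-1}\psi_l^{m_l}
\]
is a homogeneous form of degree $k-j$ in $\psi_0,\dots,\psi_{q-1}$ alone and is free of $\psi_q$. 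The key step is to deduce $Q_j=0$ for every $j$ from $\sum_j\psi_q^{\,j}Q_j=0$, using the linear independence of the powers $\{\psi_q^{\,k}\}_{k\ge 0}$. Once this is granted, each $Q_j$ is a homogeneous form in the $q$ functions $\psi_0,\dots,\psi_{q-1}$, which are again linearly independent and whose individual powers are again independent, so the inductive hypothesis forces $c_{m_0,\dots,m_{q-1},j}=0$ for all admissible multi-indices. Letting $j$ run over $0,\dots,k$ exhausts every coefficient and closes the induction.

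I expect the separation step $\sum_j\psi_q^{\,j}Q_j=0\Rightarrow Q_j=0$ to be the genuine obstacle. Independence of the isolated powers of $\psi_q$ does not, by itself, permit treating $\psi_q$ as a free variable against the coefficients $Q_j$, since the $Q_j$ are built from functions living on the same simplex; what is really needed is that $\psi_q$ varies in a direction not already captured by $\psi_0,\dots,\psi_{q-1}$, which the standing hypotheses (full linear independence together with non-constancy of each $\psi_l$) are meant to supply. To make this precise I would either verify directly that $\psi_q$ is algebraically independent of $\psi_0,\dots,\psi_{q-1}$ before matching coefficients, or dehomogenize: dividing the original identity by $\psi_0^{\,k}$ on the set where $\psi_0\ne 0$ turns it into a relation
\[
\sum_{m_1+\cdots+m_q\le k} c_{m}\,\prod_{l=1}^{q}\Big(\frac{\psi_l}{\psi_0}\Big)^{m_l}=0 ,
\]
reducing the homogeneous claim in $q+1$ functions to linear independence of all monomials of degree at most $k$ in the $q$ ratios $\psi_l/\psi_0$, which is typically the cleaner formulation to verify.
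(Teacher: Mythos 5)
The paper states this lemma without proof, so there is nothing to compare your argument against; it must stand on its own. Your induction skeleton is fine and you have correctly located the crux: the implication $\sum_{j}\psi_q^{\,j}Q_j=0\Rightarrow Q_j=0$ for all $j$. But you leave that step open, and it cannot be closed from the stated hypotheses, because the lemma as written is false. Take $q=2$ on an interval, with $\psi_0=1$, $\psi_1=x$, $\psi_2=x^2+x$. These three functions are linearly independent, $\{\psi_1^k\}$ are independent, and $\{\psi_2^k\}$ are independent (their degrees $0,2,4,\dots$ are pairwise distinct), yet for $k=2$ one has the nontrivial relation $\psi_0\psi_2-\psi_1^2-\psi_0\psi_1=0$ among three distinct degree-$2$ monomials. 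In the language of your own proof: $Q_0=\psi_1^2+\psi_0\psi_1\neq0$ and $Q_1=-\psi_0\neq 0$, yet $Q_0+\psi_2 Q_1=0$. So linear independence of the family together with independence of each individual power sequence genuinely does not deliver the coefficient matching you need; your instinct that this is "the genuine obstacle" was exactly right, and no amount of care will remove it without a stronger hypothesis.

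What rescues the lemma in the paper is the extra structure of the intended application, where the $\psi_l$ are barycentric coordinates together with $\lambda_0=1$, i.e.\ \emph{affine} functions. A linearly independent family of affine functions $\{1,\psi_1,\dots,\psi_q\}$ can be completed to an affine coordinate system, so after an affine change of variables the products $\prod_l\psi_l^{m_l}$ with $\sum_l m_l=k$ become the ordinary monomials $x_1^{m_1}\cdots x_q^{m_q}$ of total degree at most $k$ (this is essentially your dehomogenization step with $\psi_0=1$), whose linear independence is standard. To salvage the write-up, either add the hypothesis that $\psi_0,\dots,\psi_q$ are algebraically independent (which is precisely what your separation step requires) or restrict to affine $\psi_l$ and argue via the change of variables; under the hypotheses as literally stated, the claim is not provable because it is not true.
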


Let $\lambda_0 = 1$. For a $n$-dimensional simplex $K$, it is
well-known that $\{\lambda_l, l=0, 1,\cdots, n+1\}$ is a set of
linearly dependent functions, which forms a basis of $\cP_1(K;\R)$ if
any one of them is removed. In light of Lemma
\ref{lem:polynomial-basis}, $\cP_k(K;\R)$ can be written as 
$$ 
\cP_k(K;\R) = \text{span} \left\{ 
\prod_{l=1}^{n+1} \lambda_l^{m_l}, \sum_{l=1}^{n+1} m_l = k 
\right\},
$$ 
or for $i\geq 1$, 
$$ 
\cP_k(K;\R) = \text{span} \left\{ \prod_{l=0, l \neq i}^{n+1}
\lambda_l^{m_l}, \sum_{l=0, l\neq i}^{n+1} m_l = k \right\} 
= \text{span} \left\{ \prod_{l=1, l \neq i}^{n+1} \lambda_l^{m_l},
\sum_{l=1, l\neq i}^{n+1} m_l \leq k \right\}.
$$ 

Now we introduce the spaces by removing two functions in $\{\lambda_l,
l=0,1, \cdots, n+1\}$. If $\lambda_0$ and $\lambda_i (1\leq i \leq n)$
are removed, we have the following space 
\begin{equation} \label{equ:hat-i}
\cP_k^{\hat{0},\hat{i}}(K;\R) := \text{span} \left\{ \prod_{l=1, l\neq
i}^{n+1} \lambda_l^{m_l}, \sum_{l=1, l\neq i}^{n+1} m_l = k
\right\}.
\end{equation}
If $\lambda_i, \lambda_j (1\leq i \neq j \leq n)$ are removed, we have 
\begin{equation} \label{equ:hat-ij}
\cP_k^{\hat{i}, \hat{j}}(K;\R) := \text{span} \left\{ \prod_{l=0,
l\neq i,j}^{n+1} \lambda_l^{m_l}, \sum_{l=0, l\neq i,j}^{n+1} m_l = k
\right\} = \text{span} \left\{ \prod_{l=1, l\neq i,j}^{n+1}
\lambda_l^{m_l}, \sum_{l=1, l\neq i,j}^{n+1} m_l \leq k \right\}.
\end{equation}

\subsubsection{Natural restriction and extension operators}
The restriction operator is defined as 
\begin{equation} \label{equ:restriction} 
\mathcal{R}_i: \cP_k(K;\R) \mapsto \cP_k(F_i;\R), \quad \mathcal{R}_i
p := p|_{F_i}, \quad \forall p\in \cP_k(K;\R).
\end{equation} 

For any $F_i \subset \partial K$, we have $\lambda_i|_{F_i} = 0$ and
for $l\neq i$, $\lambda_l^{F_i} = \lambda_l|_{F_i}$ are exactly the
barycentric coordinates on $F_i$.  For any $p\in \cP_k(F_i;\R)$, it
can be uniquely written under the basis $\{\lambda_l^{F_i}, l \neq
i,j\}$, i.e. 
$$
p = \sum_{|\boldsymbol{m}|=k} c_{\boldsymbol{m}} \prod_{l=0, l\neq
i,j}^{n+1} \lambda_{l}^{F_i, m_l}.
$$
Then the extension operator is denoted as  
\begin{equation} \label{equ:extension}
\begin{aligned}
\mathcal{E}_i^{\hat{j}}: \cP_k(F_i;\R) &\mapsto \cP_k(K;\R) \qquad 0
\leq j \neq i \leq n+1, \\
\mathcal{E}_i^{\hat{j}} p &:= \sum_{|\boldsymbol{m}|=k}
c_{\boldsymbol{m}} \prod_{l=0, l\neq i,j}^{n+1} \lambda_{l}^{m_l}.
\end{aligned}
\end{equation}
With the help of $\mathcal{R}_i$ and $\mathcal{E}_i^{\hat{j}}$, we
have the following properties:
\begin{lemma} \label{lem:spaces}
It holds that 
\begin{enumerate}
\item $\mathcal{R}_i \mathcal{E}_i^{\hat{j}} = id_{\cP_k(F_i;\R)},
  ~\forall j \neq i$.
\item $\ker(\mathcal{R}_i)\cap \cP_k(K;\R) = \lambda_i \cP_{k-1}(K;\R)$.
\item $\cP_k^{\hat{0},\hat{i}}(K;\R) =
  \mathrm{range}(\mathcal{E}_i^{\hat{0}}), \quad
  \cP_k^{\hat{i},\hat{j}}(K;\R) =
  \mathrm{range}(\mathcal{E}_i^{\hat{j}}) =
  \mathrm{range}(\mathcal{E}_j^{\hat{i}})$.
\item $\cP_k^{\hat{0},\hat{i}}(K;\R) \cong \cP_k(F_i;\R), ~\forall 1
  \leq i \leq n+1$.
\item $\cP_k^{\hat{i},\hat{j}}(K;\R)|_{F_i} \cong \cP_k(F_i;\R),
  ~\cP_k^{\hat{i},\hat{j}}(K;\R)|_{F_j} \cong \cP_k(F_j;\R), ~\forall
  1 \leq i < j \leq n+1$.
\item $\lambda_i \cP_k^{\hat{0},\hat{j}}(K;\R) \cap \lambda_j
  \cP_k^{\hat{0},\hat{i}}(K;\R) = \{0\}, ~\forall 1\leq i \neq j \leq
  n+1$.
\end{enumerate}
\end{lemma}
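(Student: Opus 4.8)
The plan is to treat the six items in order, noting that items 4 and 5 follow formally from items 1 and 3, so the only genuinely substantive point is item 6.

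First I would establish item 1 directly from the constructions. Given $p\in\cP_k(F_i;\R)$, write it in the basis $\{\prod_{l\neq i,j}(\lambda_l^{F_i})^{m_l}\}$ — this is exactly the unique representation displayed before \eqref{equ:extension}, obtained by using $\lambda_0^{F_i}=1$ and the relation $\sum_{l\neq i}\lambda_l^{F_i}=1$ on $F_i$ to eliminate $\lambda_j^{F_i}$. Applying $\mathcal{E}_i^{\hat{j}}$ replaces each $\lambda_l^{F_i}$ by $\lambda_l$, and restricting back via $\mathcal{R}_i$ recovers $p$, since $\lambda_l|_{F_i}=\lambda_l^{F_i}$ for every $l\neq i$. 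Item 2 is the standard divisibility fact: $F_i$ is the hyperplane $\{\lambda_i=0\}$ and $\lambda_i$ is a nonconstant affine function, so after an affine change of variables making $\lambda_i$ a coordinate, any $p\in\cP_k(K;\R)$ with $p|_{F_i}=0$ has every term divisible by $\lambda_i$; hence $p=\lambda_i q$ with $q\in\cP_{k-1}(K;\R)$, the reverse inclusion being trivial.

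Next I would verify item 3 by unwinding the definitions: $\mathcal{E}_i^{\hat{0}}$ produces precisely the monomials $\prod_{l=1,l\neq i}^{n+1}\lambda_l^{m_l}$ with $\sum m_l=k$, which is the spanning set \eqref{equ:hat-i} of $\cP_k^{\hat{0},\hat{i}}(K;\R)$, while for $i,j\geq1$ the operator $\mathcal{E}_i^{\hat{j}}$ produces $\prod_{l=0,l\neq i,j}^{n+1}\lambda_l^{m_l}$ with $\sum m_l=k$, matching \eqref{equ:hat-ij}; since this description is symmetric in $i$ and $j$, $\mathcal{E}_j^{\hat{i}}$ has the same range. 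Items 4 and 5 then follow formally: by item 1 each extension operator has $\mathcal{R}_i$ (resp. $\mathcal{R}_j$) as a left inverse, hence is injective, and combined with item 3 it is a bijection onto $\cP_k^{\hat{0},\hat{i}}(K;\R)$ (resp. $\cP_k^{\hat{i},\hat{j}}(K;\R)$), whose inverse is the restriction. This gives the claimed isomorphisms, the restriction to $F_i$ (or $F_j$) being exactly $\mathcal{R}_i$ (or $\mathcal{R}_j$) acting on the corresponding range.

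The crux is item 6, which I would prove by a monomial-degree argument. By Lemma \ref{lem:polynomial-basis}, exactly as for $\cP_k(K;\R)$, the degree-$(k+1)$ monomials $\{\prod_{l=1}^{n+1}\lambda_l^{p_l}:\sum p_l=k+1\}$ form a basis of $\cP_{k+1}(K;\R)$. Any element of $\lambda_i\cP_k^{\hat{0},\hat{j}}(K;\R)$ is a combination of such monomials each having $\lambda_i$-exponent $\geq1$ and $\lambda_j$-exponent $0$ (since $\cP_k^{\hat{0},\hat{j}}$ omits $\lambda_j$), while any element of $\lambda_j\cP_k^{\hat{0},\hat{i}}(K;\R)$ is a combination of monomials with $\lambda_j$-exponent $\geq1$ and $\lambda_i$-exponent $0$. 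If $P$ lies in the intersection, its unique expansion in this basis is simultaneously supported on monomials with $\lambda_i$-exponent $\geq1$ and on monomials with $\lambda_i$-exponent $0$; these two sets are disjoint, so every coefficient vanishes and $P=0$. I expect item 6 to be the only step needing real care: it hinges on the linear independence of the full degree-$(k+1)$ monomial family (where Lemma \ref{lem:polynomial-basis} enters) together with careful bookkeeping of the $\lambda_i$- and $\lambda_j$-exponents across the two representations.
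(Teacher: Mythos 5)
Your proof is correct. For items 1--5 you take essentially the same route as the paper, which simply states that these properties ``are derived from the definition of natural restriction and extension operators''; your expanded verifications (left-inverse property of $\mathcal{R}_i$ on the range of $\mathcal{E}_i^{\hat{j}}$, divisibility by $\lambda_i$ for polynomials vanishing on $F_i$, identification of the ranges with the spanning sets \eqref{equ:hat-i} and \eqref{equ:hat-ij}, and the formal deduction of items 4 and 5) are exactly what that remark compresses. For item 6, however, you take a genuinely different route. The paper argues via the operators: for $p$ in the intersection it notes $\mathcal{R}_i p=0$ (from the factor $\lambda_i$) and $p=\lambda_j\mathcal{E}_i^{\hat{0}}q$ (from the other description), restricts to $F_i$ to get $\lambda_j^{F_i}q=0$ using item 1, and concludes $q=0$ since $\lambda_j^{F_i}$ is a nonzero polynomial on $F_i$. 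You instead expand everything in the degree-$(k+1)$ barycentric monomial basis of $\cP_{k+1}(K;\R)$ furnished by Lemma \ref{lem:polynomial-basis} and observe that the two subspaces are spanned by disjoint subsets of that basis (exponent patterns $m_i\geq 1,\ m_j=0$ versus $m_j\geq 1,\ m_i=0$), so uniqueness of the expansion forces the intersection to be zero. The paper's argument stays inside the operator framework that is reused in Lemma \ref{lem:direct-decomposition} and Theorem \ref{thm:scalar-decomposition}; yours is more elementary and self-contained, and in fact exhibits the two subspaces as spans of complementary basis monomials, which makes the directness of the sum transparent without invoking the restriction operator at all. Both are fully rigorous.
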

\begin{proof}
The properties 1-5 are derived from the definition of natural
restriction and extension operators.  For any $p\in \lambda_i
\cP_k^{\hat{0},\hat{j}}(K;\R) \cap \lambda_j
\cP_k^{\hat{0},\hat{i}}(K;\R)$, we immediately have $\mathcal{R}_i p =
0$ and $p = \lambda_j \mathcal{E}_i^{\hat{0}}q$, where $q \in
\cP_k(F_i;\R)$. Then 
\[
0 = \mathcal{R}_i p = \lambda_j^{F_i} \mathcal{R}_i
\mathcal{E}_i^{\hat{0}} q = \lambda_j^{F_i}q,
\]
which implies $q = 0$ thus $p = 0$.
\end{proof}

Without loss of clarity in what follows, we will use same notation
$\lambda_l$ for barycentric coordinates of both $K$ and $F$.

\subsubsection{Local Decomposition of $\cP_{k+1}(K;\R)$}
We first give the following lemma.
\begin{lemma} \label{lem:direct-decomposition}
Let $\tilde{\mathcal{R}}:V\mapsto W$ and $\tilde{\mathcal{E}}:W\mapsto
V$ be bounded linear operators between Banach spaces. If
$\tilde{\mathcal{X}} = \tilde{\mathcal{R}}\tilde{\mathcal{E}}$ is an
isomorphism on $W$,
then 
\begin{equation} \label{equ:direct-decomposition}
V = \ker(\tilde{\mathcal{R}}) \oplus
\mathrm{range}(\mathcal{\tilde{E}}).
\end{equation}
\end{lemma}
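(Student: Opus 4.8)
The plan is to exhibit an explicit bounded projection of $V$ whose range equals $\mathrm{range}(\tilde{\mathcal{E}})$ and whose kernel equals $\ker(\tilde{\mathcal{R}})$; the asserted direct-sum decomposition then follows from the elementary fact that any projection splits its domain into its kernel and its range. Since $\tilde{\mathcal{X}} = \tilde{\mathcal{R}}\tilde{\mathcal{E}}$ is an isomorphism on $W$, its inverse $\tilde{\mathcal{X}}^{-1}: W \mapsto W$ is a well-defined bounded linear operator, so I would define
\[
P := \tilde{\mathcal{E}} \tilde{\mathcal{X}}^{-1} \tilde{\mathcal{R}} : V \mapsto V,
\]
which is bounded as a composition of bounded operators. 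The key computation is that $P$ is idempotent: using $\tilde{\mathcal{R}}\tilde{\mathcal{E}} = \tilde{\mathcal{X}}$,
\[
P^2 = \tilde{\mathcal{E}} \tilde{\mathcal{X}}^{-1} (\tilde{\mathcal{R}} \tilde{\mathcal{E}}) \tilde{\mathcal{X}}^{-1} \tilde{\mathcal{R}} = \tilde{\mathcal{E}} \tilde{\mathcal{X}}^{-1} \tilde{\mathcal{X}} \tilde{\mathcal{X}}^{-1} \tilde{\mathcal{R}} = \tilde{\mathcal{E}} \tilde{\mathcal{X}}^{-1} \tilde{\mathcal{R}} = P,
\]
so $P$ is a projection.

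Next I would identify the range and kernel of $P$. Clearly $\mathrm{range}(P) \subseteq \mathrm{range}(\tilde{\mathcal{E}})$; conversely, for any $w \in W$ one computes $P(\tilde{\mathcal{E}} w) = \tilde{\mathcal{E}} \tilde{\mathcal{X}}^{-1} \tilde{\mathcal{X}} w = \tilde{\mathcal{E}} w$, so $P$ restricts to the identity on $\mathrm{range}(\tilde{\mathcal{E}})$ and therefore $\mathrm{range}(P) = \mathrm{range}(\tilde{\mathcal{E}})$. For the kernel, the inclusion $\ker(\tilde{\mathcal{R}}) \subseteq \ker(P)$ is immediate from the factorization of $P$ through $\tilde{\mathcal{R}}$; for the reverse inclusion, if $Pv = 0$ then applying $\tilde{\mathcal{R}}$ and using $\tilde{\mathcal{R}}\tilde{\mathcal{E}} = \tilde{\mathcal{X}}$ gives $0 = \tilde{\mathcal{R}} P v = \tilde{\mathcal{X}} \tilde{\mathcal{X}}^{-1} \tilde{\mathcal{R}} v = \tilde{\mathcal{R}} v$, whence $v \in \ker(\tilde{\mathcal{R}})$. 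Thus $\ker(P) = \ker(\tilde{\mathcal{R}})$.

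Finally I would conclude via the standard splitting associated with a projection: writing $v = Pv + (v - Pv)$ shows $V = \mathrm{range}(P) + \ker(P)$, since $P(v - Pv) = Pv - P^2 v = 0$, while any $v \in \mathrm{range}(P)\cap\ker(P)$ satisfies $v = Pv = 0$, giving directness. Substituting the two identifications above yields $V = \ker(\tilde{\mathcal{R}}) \oplus \mathrm{range}(\tilde{\mathcal{E}})$. There is essentially no analytic difficulty here, so the only point requiring care is the correct bookkeeping of the operator identities and the order of composition; the Banach-space hypotheses guarantee that $P$ is bounded, so the decomposition is in fact topological (into closed subspaces), although only the algebraic direct sum is claimed in the statement.
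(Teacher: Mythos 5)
Your proof is correct: the projection $P=\tilde{\mathcal{E}}\tilde{\mathcal{X}}^{-1}\tilde{\mathcal{R}}$ is the standard device here, and your verification of idempotency, of $\mathrm{range}(P)=\mathrm{range}(\tilde{\mathcal{E}})$, and of $\ker(P)=\ker(\tilde{\mathcal{R}})$ is complete. The paper omits its own proof of this lemma, but your argument is exactly the canonical one it implicitly relies on, so there is nothing to add.
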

\begin{remark}
Take $\tilde{\mathcal{R}} = \mathcal{R}_i$ and $\tilde{\mathcal{E}} =
\mathcal{E}_i^{\hat{j}}$ in Lemma \ref{lem:direct-decomposition}, we
immediately have 
\begin{equation} \label{equ:decomposition-remark}
\cP_k(K;\R) = \lambda_i \cP_{k-1}(K;\R) \oplus
\cP_k^{\hat{i},\hat{j}}(K;\R) \qquad 0\leq j \neq i \leq n+1.
\end{equation}
\end{remark}

Let $\cP_k^\perp(F_j;\R) \subset \cP_{k+1}(F_j;\R)$ be the $L^2$
orthogonal complement $\cP_k(F_j;\R)$ in $\cP_{k+1}(F_j;\R)$, namely
for $1\leq j \leq n+1$, 
$$ 
\begin{aligned}
\cP_{k+1}(F_j;\R) &= \Pi_{k,F_j}^0 \cP_{k+1}(F_j;\R) \oplus
(I-\Pi_{k,F_j}^0)\cP_{k+1}(F_j;\R)  \\ 
&= \cP_k(F_j;\R) \oplus \cP_k^\perp(F_j;\R), 
\end{aligned}
$$ 
where $\Pi_{k,F_j}^0$ is the $L^2$ projection operator to
$\cP_k(F_j;\R)$. Now we present the local decomposition of
$\cP_{k+1}(K;\R)$ as follows.

\begin{theorem}\label{thm:scalar-decomposition}
For any given $1\leq i < j \leq n+1$, it holds that 
\begin{equation} \label{equ:scalar-decomposition}
\begin{aligned}
\cP_{k+1}(K;\R) = \lambda_i \lambda_j \cP_{k-1}(K;\R) &\oplus \lambda_j
\mathcal{E}_i^{\hat{0}}\cP_k(F_i;\R) \\ 
&\oplus \lambda_i \mathcal{E}_j^{\hat{0}}\cP_k(F_j;\R) \oplus
\mathcal{E}_j^{\hat{i}}\cP_k^\perp(F_j;\R).
\end{aligned}
\end{equation}
\end{theorem}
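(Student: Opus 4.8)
The plan is to show that each of the four summands is a subspace of $\cP_{k+1}(K;\R)$, that their sum is direct, and that the dimensions add up to $\dim\cP_{k+1}(K;\R)$; a direct sum of the correct total dimension must then exhaust the space. Abbreviate the summands as $A=\lambda_i\lambda_j\cP_{k-1}(K;\R)$, $B=\lambda_j\mathcal{E}_i^{\hat{0}}\cP_k(F_i;\R)$, $C=\lambda_i\mathcal{E}_j^{\hat{0}}\cP_k(F_j;\R)$, and $D=\mathcal{E}_j^{\hat{i}}\cP_k^\perp(F_j;\R)$. Containment in $\cP_{k+1}(K;\R)$ is a degree count: by Lemma \ref{lem:spaces}(3) one has $\mathcal{E}_i^{\hat{0}}\cP_k(F_i;\R)=\cP_k^{\hat{0},\hat{i}}(K;\R)\subset\cP_k(K;\R)$ (and likewise for $C$), so $A,B,C$ lie in $\cP_{k+1}(K;\R)$, while $D$ is the image of $\cP_k^\perp(F_j;\R)\subset\cP_{k+1}(F_j;\R)$ under the degree-preserving extension $\mathcal{E}_j^{\hat{i}}$.

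For the dimension bookkeeping I would use that multiplication by a fixed nonzero polynomial is injective and that $\mathcal{E}_i^{\hat{0}}$, $\mathcal{E}_j^{\hat{0}}$, $\mathcal{E}_j^{\hat{i}}$ each admit a left inverse ($\mathcal{R}_i$ resp.\ $\mathcal{R}_j$) by Lemma \ref{lem:spaces}(1), hence are injective. Thus $\dim A=\dim\cP_{k-1}(K;\R)$, $\dim B=\dim\cP_k(F_i;\R)$, $\dim C=\dim\cP_k(F_j;\R)$ and $\dim D=\dim\cP_{k+1}(F_j;\R)-\dim\cP_k(F_j;\R)$, and a short computation with Pascal's rule confirms $\dim A+\dim B+\dim C+\dim D=\dim\cP_{k+1}(K;\R)$.

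The heart of the argument is directness. Suppose $a+b+c+d=0$ with $a\in A$, $b\in B$, $c\in C$, $d\in D$, and apply the restriction $\mathcal{R}_j$ (setting $\lambda_j=0$). The factor $\lambda_j$ kills $a$ and $b$ on $F_j$; writing $c=\lambda_i\,\mathcal{E}_j^{\hat{0}}p$ and $d=\mathcal{E}_j^{\hat{i}}r$ with $p\in\cP_k(F_j;\R)$ and $r\in\cP_k^\perp(F_j;\R)$, Lemma \ref{lem:spaces}(1) gives $\mathcal{R}_jc=\lambda_i^{F_j}p$ and $\mathcal{R}_jd=r$. Hence $\lambda_i^{F_j}p+r=0$ on $F_j$; pairing with $p$ in $L^2(F_j)$ and using $r\perp\cP_k(F_j;\R)$ leaves $\int_{F_j}\lambda_i^{F_j}p^2=0$. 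Since $i\neq j$, $\lambda_i^{F_j}$ is a genuine barycentric coordinate on $F_j$, nonnegative and positive in the interior, so $p=0$, whence $c=0$ and then $r=0$, $d=0$. The surviving identity $a+b=0$ reads $\lambda_j(\lambda_i q_1+q_2)=0$ with $q_1\in\cP_{k-1}(K;\R)$ and $q_2\in\cP_k^{\hat{0},\hat{i}}(K;\R)$; cancelling $\lambda_j$ and invoking the splitting $\cP_k(K;\R)=\lambda_i\cP_{k-1}(K;\R)\oplus\cP_k^{\hat{0},\hat{i}}(K;\R)$ from \eqref{equ:decomposition-remark} (with second hat index $0$) forces $q_1=q_2=0$, so $a=b=0$.

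I expect the separation of $C$ from $D$ to be the only delicate step: both summands sit ``over'' the face $F_j$ and are distinguished not by polynomial degree but by the orthogonal splitting $\cP_{k+1}(F_j;\R)=\cP_k(F_j;\R)\oplus\cP_k^\perp(F_j;\R)$, so it is the weighted-$L^2$ positivity of $\lambda_i^{F_j}$ (rather than a restriction or degree argument) that makes them independent. Everything else is a direct application of the restriction–extension identities in Lemma \ref{lem:spaces} together with the scalar decomposition already recorded in \eqref{equ:decomposition-remark}.
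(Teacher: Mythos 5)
Your proof is correct. The difference from the paper's argument is mainly organizational: the paper obtains the spanning property constructively, by applying Lemma \ref{lem:direct-decomposition} with $\tilde{\mathcal{R}}=\mathcal{R}_j$ and the combined extension $\tilde{\mathcal{E}}=\lambda_i\mathcal{E}_j^{\hat{0}}\Pi_{k,F_j}^0+\mathcal{E}_j^{\hat{i}}(I-\Pi_{k,F_j}^0)$, which yields $\cP_{k+1}(K;\R)=\ker(\mathcal{R}_j)\oplus\mathrm{range}(\tilde{\mathcal{E}})$ at once and then splits each piece ($\ker(\mathcal{R}_j)=A\oplus B$ via \eqref{equ:decomposition-remark}, $\mathrm{range}(\tilde{\mathcal{E}})=C\oplus D$). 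You instead get spanning from a dimension count (which checks out: $\binom{k+n-1}{n}+2\binom{k+n-1}{n-1}+\binom{k+n}{n-1}-\binom{k+n-1}{n-1}=\binom{k+n+1}{n}$ by Pascal's rule, using injectivity of multiplication by barycentric coordinates and of the extensions) and prove directness of the four-fold sum head-on. The analytic heart is identical in both: after restricting to $F_j$, the $C$- and $D$-components are separated by pairing against $p$ and exploiting $\int_{F_j}\lambda_i^{F_j}p^2=0\Rightarrow p=0$ — this is exactly the computation the paper uses to show $\tilde{\mathcal{X}}$ is injective. Your version is slightly more elementary (pure linear algebra, no need for the abstract splitting lemma in this step), while the paper's version packages the argument so that the associated projections onto the summands are implicitly available, which is convenient for the later constructions of interpolation operators. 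One small point worth making explicit in your write-up: the cancellation of $\lambda_j$ in $\lambda_j(\lambda_iq_1+q_2)=0$ uses that the polynomial ring is an integral domain, and the identification $B=\lambda_j\cP_k^{\hat{0},\hat{i}}(K;\R)$ needed to invoke \eqref{equ:decomposition-remark} with hat indices $\hat{i},\hat{0}$ is Lemma \ref{lem:spaces}(3); both are immediate but should be cited.
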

\begin{proof}
Take $\tilde{\mathcal{R}} = \mathcal{R}_j$, $\tilde{\mathcal{E}} =
\lambda_i \mathcal{E}_j^{\hat{0}}\Pi_{k,F_j}^0 +
\mathcal{E}_j^{\hat{i}}(I-\Pi_{k,F_j}^0)$ in Lemma
\ref{lem:direct-decomposition}. A simple calculation shows that
$\tilde{\mathcal{X}} = \tilde{\mathcal{R}}\tilde{\mathcal{E}}:
\cP_{k+1}(F_j;\R)\mapsto \cP_{k+1}(F_j;\R)$. Since
$\dim(\cP_{k+1}(F_j;\R)) < \infty$, we only need to check that 
$\tilde{\mathcal{X}}$ is one-to-one to prove it isomorphism. For any
$p_j \in \ker(\tilde{\mathcal{X}})$, we have 
$$ 
0 = \tilde{\mathcal{X}}p_j
= \mathcal{R}_j\lambda_i \mathcal{E}_j^{\hat{0}} \Pi_{k,F_j}^0 p_j 
+ \mathcal{R}_j\mathcal{E}_j^{\hat{i}} (I-\Pi_{k,F_j}^0)p_j
= \lambda_i \Pi_{k,F_j}^0 p_j + (I-\Pi_{k,F_j}^0)p_j.
$$ 
Apply $\Pi_{k,F_j}^0$ on both sides, we have 
$$ 
\Pi_{k,F_j}^0 \left(\lambda_i \Pi_{k,F_j}^0 p_j \right) = 0 \quad
\text{or}\quad \int_{F_j} \lambda_i \Pi_{k,F_j}^0 p_j q = 0,~\forall
q\in \cP_k(F_j;\R),
$$ 
which implies $\Pi_{k,F_j}^0 p_j = 0$ by taking $q = \Pi_{k,F_j}p_j$.
Then $(I-\Pi_{k,F_j}^0)p_j = 0$ thus $p_j = 0$.

In light of Lemma \ref{lem:direct-decomposition}, we have 
\begin{equation} \label{equ:decom-RE} 
\cP_{k+1}(K;\R) = \ker(\mathcal{R}_j) \oplus
\mathrm{range}(\mathcal{\tilde{E}}).
\end{equation}
From Lemma \ref{lem:spaces} and \eqref{equ:decomposition-remark},
\begin{equation} \label{equ:decom-R} 
\begin{aligned}
\ker(\mathcal{R}_j)\cap \cP_{k+1}(K;\R) = \lambda_j \cP_k(K;\R) &=
\lambda_j \left(
    \lambda_i\cP_{k-1}(K;\R) \oplus
    \mathcal{E}_i^{\hat{0}}\cP_k(F_i;\R) \right) \\ 
   &= \lambda_i\lambda_j \cP_{k-1}(K;\R) \oplus \lambda_j
   \mathcal{E}_i^{\hat{0}}\cP_k(F_i;\R).
\end{aligned}
\end{equation} 
And 
\begin{equation} \label{equ:decom-E} 
\mathrm{range}(\tilde{\mathcal{E}}) = \lambda_i
\mathcal{E}_j^{\hat{0}}\cP_k(F_j;\R) +
\mathcal{E}_j^{\hat{i}}\cP_k^\perp(F_j;\R). 
\end{equation}
If $p \in \lambda_i \mathcal{E}_j^{\hat{0}}\cP_k(F_j;\R) \cap
\mathcal{E}_j^{\hat{i}}\cP_k^\perp(F_j;\R)$, then $\mathcal{R}_j p \in 
\lambda_i \cP_k(F_j;\R) \cap \cP_k^\perp(F_j;\R) = \{0\}$, which
implies $p \in \ker(\mathcal{R}_j)$. Then we have $p = 0$ in light of
\eqref{equ:decom-RE}, which means the sum in \eqref{equ:decom-E} is
direct. Take \eqref{equ:decom-R} and \eqref{equ:decom-E} into
\eqref{equ:decom-RE}, we obtain the local decomposition
\eqref{equ:scalar-decomposition}.
\end{proof}

For the last term in \eqref{equ:scalar-decomposition}, we will show
its symmetry with respect to $i$ and $j$.
\begin{lemma}
It holds that  
\begin{equation} \label{equ:symmetric-ij}
\mathcal{E}_j^{\hat{i}}\cP_k^\perp(F_j;\R) =
\mathcal{E}_i^{\hat{j}}\cP_k^\perp(F_i;\R).
\end{equation}
\end{lemma}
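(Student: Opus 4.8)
The plan is to reduce the claimed identity to a statement about restrictions to $F_j$, exploiting that both sides sit inside the common space $\cP_{k+1}^{\hat{i},\hat{j}}(K;\R)$. By part 3 of Lemma~\ref{lem:spaces} we have $\mathrm{range}(\mathcal{E}_j^{\hat{i}}) = \cP_{k+1}^{\hat{i},\hat{j}}(K;\R) = \mathrm{range}(\mathcal{E}_i^{\hat{j}})$, so both $\mathcal{E}_j^{\hat{i}}\cP_k^\perp(F_j;\R)$ and $\mathcal{E}_i^{\hat{j}}\cP_k^\perp(F_i;\R)$ are subspaces of $\cP_{k+1}^{\hat{i},\hat{j}}(K;\R)$, on which $\mathcal{R}_j$ is injective (part 5 of Lemma~\ref{lem:spaces}, at degree $k+1$). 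Hence it suffices to show the two subspaces have the same image under $\mathcal{R}_j$. On the left, part 1 of Lemma~\ref{lem:spaces} gives $\mathcal{R}_j\mathcal{E}_j^{\hat{i}} = \mathrm{id}$, so $\mathcal{R}_j(\mathcal{E}_j^{\hat{i}}\cP_k^\perp(F_j;\R)) = \cP_k^\perp(F_j;\R)$, and the goal becomes $\mathcal{R}_j(\mathcal{E}_i^{\hat{j}}\cP_k^\perp(F_i;\R)) = \cP_k^\perp(F_j;\R)$.

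Next I would identify the operator $\mathcal{R}_j\mathcal{E}_i^{\hat{j}}:\cP_{k+1}(F_i;\R)\to\cP_{k+1}(F_j;\R)$ geometrically. Let $\phi:F_j\to F_i$ be the affine isomorphism determined by $\phi(a_i) = a_j$ and $\phi(a_l) = a_l$ for $l\neq i,j$, which maps the vertex set of $F_j$ bijectively onto that of $F_i$; set $\phi^*q := q\circ\phi$. Tracking barycentric coordinates one checks $\phi^*\lambda_l^{F_i} = \lambda_l^{F_j}$ for every $l\in\{0,1,\dots,n+1\}\setminus\{i,j\}$ (with $\lambda_0\equiv 1$). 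Since $\mathcal{E}_i^{\hat{j}}$ expands in the monomials $\prod_{l\neq i,j}(\lambda_l^{F_i})^{m_l}$ and then replaces $\lambda_l^{F_i}$ by $\lambda_l$, while $\mathcal{R}_j$ replaces $\lambda_l$ by $\lambda_l^{F_j}$, this coordinate identity yields $\mathcal{R}_j\mathcal{E}_i^{\hat{j}} = \phi^*$. In particular $\phi^*$ carries $\cP_{k+1}(F_i;\R)$ onto $\cP_{k+1}(F_j;\R)$ and $\cP_k(F_i;\R)$ onto $\cP_k(F_j;\R)$, because composition with an affine isomorphism preserves polynomial degree.

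Then it remains to show $\phi^*\cP_k^\perp(F_i;\R) = \cP_k^\perp(F_j;\R)$, i.e. that $\phi^*$ preserves the $L^2$-orthogonal complement. This is the crux: for a general linear bijection $L^2$-orthogonality need not be preserved, but $\phi$ is affine, so its surface Jacobian $|J_\phi|$ is a nonzero constant, and a change of variables $x=\phi(y)$ gives, for all $p\in\cP_{k+1}(F_i;\R)$ and $s\in\cP_k(F_i;\R)$,
\[
\int_{F_j}(\phi^*p)(\phi^*s)\,dy = |J_\phi|^{-1}\int_{F_i}p\,s\,dx .
\]
Thus $p\perp_{L^2(F_i)}\cP_k(F_i;\R)$ if and only if $\phi^*p\perp_{L^2(F_j)}\phi^*\cP_k(F_i;\R)=\cP_k(F_j;\R)$, and since $\phi^*p\in\cP_{k+1}(F_j;\R)$ this says exactly $\phi^*p\in\cP_k^\perp(F_j;\R)$. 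Combining, $\mathcal{R}_j(\mathcal{E}_i^{\hat{j}}\cP_k^\perp(F_i;\R)) = \phi^*\cP_k^\perp(F_i;\R) = \cP_k^\perp(F_j;\R)$, and the injectivity of $\mathcal{R}_j$ on $\cP_{k+1}^{\hat{i},\hat{j}}(K;\R)$ concludes the argument. The main obstacle is this orthogonality-preservation step, which relies on the constancy of the affine Jacobian; the other point requiring care is correctly pinning down $\phi$ and verifying $\mathcal{R}_j\mathcal{E}_i^{\hat{j}}=\phi^*$.
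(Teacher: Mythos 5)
Your proof is correct and follows essentially the same route as the paper: both arguments hinge on the affine vertex-swap map between $F_i$ and $F_j$ (your $\phi$ is the paper's $A^{i,j}$ read in the opposite direction), the identification of $\mathcal{R}_j\mathcal{E}_i^{\hat j}$ with pullback along that map, and the constant-Jacobian change of variables showing that $L^2$-orthogonality to $\cP_k$ is preserved. Your packaging via injectivity of $\mathcal{R}_j$ on $\cP_{k+1}^{\hat i,\hat j}(K;\R)$ is just a cleaner way of stating the paper's step of writing $p=\mathcal{E}_j^{\hat i}p_j$ and proving $p_j\in\cP_k^\perp(F_j;\R)$.
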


\begin{proof}
Note that $\mathcal{E}_j^{\hat{i}}\cP_k(F_j;\R) =
\mathcal{E}_i^{\hat{j}}\cP_k(F_i;\R)$, then for any $p \in
\mathcal{E}_i^{\hat{j}}\cP_k^\perp(F_i;\R)$ and $q_j \in
\cP_k(F_j;\R)$, there exists $p_j \in \cP_{k+1}(F_j;\R)$ and $q_i\in
\cP_k(F_i;\R)$, such that 
$$ 
p = \mathcal{E}_j^{\hat{i}}p_j, \qquad q_i = \mathcal{R}_i
\mathcal{E}_j^{\hat{i}}q_j.
$$ 
Hence, $p \in \mathcal{E}_i^{\hat{j}}\cP_k^\perp(F_i;\R)$ implies that 
\begin{equation} \label{equ:perp-i} 
\int_{F_i} \mathcal{R}_i \mathcal{E}_j^{\hat{i}}p_j \cdot
\mathcal{R}_i \mathcal{E}_j^{\hat{i}} q_j \mathrm{d} x = 0.
\end{equation} 
Define the affine mapping $A^{i,j}: F_i \mapsto F_j$ by 
$$ 
A^{i,j}(a_s) = a_s, s \neq i,j \quad \text{and} \quad A^{i,j}(a_i) = a_j.
$$ 
It is straightforward that 
$$ 
\lambda_s^{F_i}(x) = \lambda_s^{F_j}(A^{i,j}(x)), s \neq i,j \quad
\text{and} \quad \lambda_j^{F_i}(x) = \lambda_i^{F_j}(A^{i,j}(x)), 
$$ 
and 
$$ 
(\mathcal{R}_i \mathcal{E}_j^{\hat{i}}f_j)(x) = f_j(A^{i,j}(x)) \qquad
\forall f_j \in \cP_k(F_j;\R).
$$ 
Then \eqref{equ:perp-i} implies 
$$ 
0 = \int_{F_i} p_j(A^{i,j}(x))\cdot q_j(A^{i,j}(x))\mathrm{d}x =
\det(DA^{i,j})^{-1} \int_{F_j} p_j(y) q_j(y) \mathrm{d}y,
$$ 
where $DA^{i,j}$ is the Jaboci of $A^{i,j}$. Then $p_j \in
\cP_k^\perp(F_j;\R)$ thus $p \in
\mathcal{E}_j^{\hat{i}}\cP_k^\perp(F_j;\R)$. Therefore,
$\mathcal{E}_i^{\hat{j}}\cP_k^\perp(F_j;\R) \subset
\mathcal{E}_j^{\hat{i}}\cP_k^\perp(F_i;\R)$.
\end{proof}

\subsection{Local decomposition of $\cP_{k+1}(K;\mS)$}

In light of Theorem \ref{thm:scalar-decomposition} and Lemma
\ref{lem:basis-S}, we immediately have the local decomposition of
$\cP_{k+1}(K;\mS)$ as 
\begin{equation} \label{equ:local-decomposition1}
\begin{aligned}
\cP_{k+1}(K;\mS) = \bigoplus_{1\leq i < j\leq n+1} \Big( 
    \lambda_i\lambda_j \cP_{k-1}(K;\R) &\oplus \lambda_j
    \cP_k^{\hat{0},\hat{i}}(K;\R)  \\
    &\oplus \lambda_i \cP_k^{\hat{0},\hat{j}}(K;\R) \oplus
    \mathcal{E}_j^{\hat{i}}\cP_k^\perp(F_j;\R)
    \Big) t_{ij}t_{ij}^T.
\end{aligned}
\end{equation}
Therefore, we can define the following three spaces: 
\begin{enumerate}
\item {\em local conforming div-bubble function spaces} (see also
\cite{hu2015finite}) 
\begin{equation} \label{equ:local-c-div-bubble}
\Sigma_{k+1,h,b}(K) := \bigoplus_{1\leq i<j\leq n+1} \lambda_i
\lambda_j \cP_{k-1}(K;\R)t_{ij}t_{ij}^T.
\end{equation}
\item {\em local face-bubble function spaces} 
\begin{equation} \label{equ:local-face-bubble}
\begin{aligned}
\tilde{\Sigma}_{k+1,h,f}(K) &:= \bigoplus_{1\leq i < j \leq n+1}
\left( \lambda_i \cP_k^{\hat{0}, \hat{j}}(K;\R) \oplus \lambda_j
    \cP_k^{\hat{0},\hat{i}}(K;\R) \right) t_{ij}t_{ij}^T, \\
    &:= \bigoplus_{i=1}^{n+1} \tilde{\Sigma}_{k+1,h,F_i}(K),
\end{aligned}
\end{equation}
where 
\begin{equation} \label{equ:1-face-bubble}
\tilde{\Sigma}_{k+1,h,F_i}(K) := \bigoplus_{j=1, j\neq
i}^{n+1}\lambda_j \cP_k^{\hat{0},\hat{i}}(K;\R)t_{ij}t_{ij}^T.
\end{equation}
\item {\em local nonconforming div-bubble function spaces}
\begin{equation} \label{equ:local-nc-div-bubble}
\tilde{\Sigma}_{k+1,h,b}(K) :=\bigoplus_{1\leq i < j \leq n+1}
\mathcal{E}_j^{\hat{i}}\cP_k^\perp(F_j;\R)t_{ij}t_{ij}^T.
\end{equation}
\end{enumerate}

The following local decomposition of $\cP_{k+1}(K;\mS)$ then follows
from the definition of spaces and \eqref{equ:local-decomposition1}
directly.
\begin{theorem} \label{thm:local-decomposition}
It holds that 
\begin{equation} \label{equ:local-decomposition}
\cP_{k+1}(K;\mS) = \Sigma_{k+1,h,b}(K) \oplus
\tilde{\Sigma}_{k+1,h,f}(K) \oplus \tilde{\Sigma}_{k+1,h,b}(K).
\end{equation}
\end{theorem}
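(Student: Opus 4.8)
The plan is to obtain the three-fold decomposition as a pure regrouping of the four-term sum already recorded in \eqref{equ:local-decomposition1}, so that almost all of the real content lives upstream in Theorem \ref{thm:scalar-decomposition} and Lemma \ref{lem:basis-S}. First I would reduce the matrix-valued statement to the scalar one. Because $\{t_{ij}t_{ij}^T : 1\le i<j\le n+1\}$ is a basis of $\mS$ by Lemma \ref{lem:basis-S}, every $\btau \in \cP_{k+1}(K;\mS)$ admits a unique pointwise expansion $\btau(x) = \sum_{i<j} p_{ij}(x)\, t_{ij}t_{ij}^T$, where the coefficient functions $p_{ij}$ are recovered from the entries of $\btau$ by a fixed invertible linear map and therefore satisfy $p_{ij} \in \cP_{k+1}(K;\R)$. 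This produces the coefficient-wise direct sum $\cP_{k+1}(K;\mS) = \bigoplus_{1\le i<j\le n+1} \cP_{k+1}(K;\R)\, t_{ij}t_{ij}^T$, which is the step that promotes the scalar theory to the tensor theory.

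Next I would insert Theorem \ref{thm:scalar-decomposition} into each scalar factor. Replacing $\cP_{k+1}(K;\R)$ by its four-term direct sum and using the identification $\mathcal{E}_i^{\hat{0}}\cP_k(F_i;\R) = \cP_k^{\hat{0},\hat{i}}(K;\R)$ from item 3 of Lemma \ref{lem:spaces}, the factor attached to each $t_{ij}t_{ij}^T$ splits as $\lambda_i\lambda_j\cP_{k-1}(K;\R) \oplus \lambda_j\cP_k^{\hat{0},\hat{i}}(K;\R) \oplus \lambda_i\cP_k^{\hat{0},\hat{j}}(K;\R) \oplus \mathcal{E}_j^{\hat{i}}\cP_k^\perp(F_j;\R)$, which is exactly \eqref{equ:local-decomposition1}. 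Since a direct sum remains direct under an arbitrary grouping of its summands, I would then collect the terms across all pairs $(i,j)$ by type: the first summands assemble into $\Sigma_{k+1,h,b}(K)$, the second and third into $\tilde{\Sigma}_{k+1,h,f}(K)$, and the fourth into $\tilde{\Sigma}_{k+1,h,b}(K)$, matching \eqref{equ:local-c-div-bubble}, \eqref{equ:local-face-bubble}, and \eqref{equ:local-nc-div-bubble} respectively. This yields \eqref{equ:local-decomposition}.

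The only genuine point to verify — and the step I would treat as the main obstacle, modest as it is — is the first reduction: that a basis of $\mS$ lifts to a \emph{direct}, degree-preserving decomposition of $\cP_{k+1}(K;\mS)$. Everything after that is the observation that regrouping the summands of a direct sum leaves it direct, together with the bookkeeping that the three named spaces are precisely the three groups. In particular, no new independence argument is required beyond those already built into \eqref{equ:local-decomposition1}: the inner directness within each $t_{ij}t_{ij}^T$-block is supplied by Theorem \ref{thm:scalar-decomposition}, while the directness across distinct blocks is supplied by the linear independence of the $t_{ij}t_{ij}^T$.
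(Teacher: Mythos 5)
Your proposal is correct and follows essentially the same route as the paper, which obtains \eqref{equ:local-decomposition1} from Theorem \ref{thm:scalar-decomposition} together with Lemma \ref{lem:basis-S} and then reads off \eqref{equ:local-decomposition} as a regrouping of the summands according to the definitions \eqref{equ:local-c-div-bubble}--\eqref{equ:local-nc-div-bubble}. Your explicit justification of the coefficient-wise direct sum $\cP_{k+1}(K;\mS)=\bigoplus_{i<j}\cP_{k+1}(K;\R)\,t_{ij}t_{ij}^T$ simply spells out a step the paper treats as immediate.
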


\subsection{Unisolvent set of degrees of freedom for local face-bubble
function spaces}
From \eqref{equ:1-face-bubble} and Lemma \ref{lem:spaces}, we have 

\[
\begin{aligned}
\tilde{\Sigma}_{k+1,h,F_i}(K)\nu_{F_i}|_{F_i} &= \sum_{j=1,j\neq i}^{n+1}
\lambda_j \mathcal{R}_i \left(\cP_k^{\hat{0},\hat{i}}(K;\R)\right)
t_{ij} (t_{ij}^T\nu_{F_i}) \\
  &= \sum_{j=1,j\neq i}^{n+1} \lambda_j \cP_k(F_i;\R) t_{ij} \\
  &= T^{\hat{i}}D_\lambda^{\hat{i}} \cP_k(F_i;\R^n),
\end{aligned}
\]
where $D_\lambda^{\hat{i}} = \diag(\lambda_1, \cdots, \lambda_{i-1},
\lambda_{i+1}, \cdots, \lambda_{n+1}), T^{\hat{i}} =
(t_{i1},\cdots, t_{i,i-1}, t_{i,i+1}, \cdots, t_{i,n+1}) \in
\R^{n\times n}$. It is apparent that $\det(T^{\hat{i}}) \neq 0$,
and one inner product of $\cP_k(F_i;\R^n)$ can be defined as  
\begin{equation} \label{equ:inner-product-F}
\langle \cdot, \cdot \rangle_{D_\lambda^{\hat{i}}} := \int_{F_i}
D_\lambda^{\hat{i}}p\cdot q \qquad \forall p, q\in \cP_k(F_i;\R^n).
\end{equation}
Therefore, the unisolvent set of d.o.f. for
$\tilde{\Sigma}_{k+1,h,F_i}(K)$ can be written as 
\begin{equation} \label{equ:dof-face1}
N_{F_i}^{\mu}(\btau) := \int_{F_i} \btau\nu_{F_i} \cdot \mu \qquad
\forall \mu\in\cP_k(F_i;\R^n).
\end{equation}

\paragraph{Basic functions for a specific set of degrees of freedom}
Denote $\{\varphi_{F_i,t}, t=1, \cdots, C_{k+n-1}^k\}$ as a basis of
$\cP_k(F_i; \R)$. For convenience, $\varphi_{F_i,t}$ are normalized
such that $\frac{1}{|F_i|}\int_{F_i} \varphi_{F_i,t}^2 = 1$. Then 
$$ 
\cP_k(F_i;\R^n) = \text{span}\left\{ 
\varphi_{F_i,t}e_m, ~t=1,\cdots, C_{k+n-1}^k, m=1,\cdots, n
\right\},
$$ 
where $e_m~(m=1,\cdots, n)$ are the unit vectors in $\R^n$. Hence, the
set of d.o.f. defined in \eqref{equ:dof-face1} is equivalent to 
\begin{equation} \label{equ:dof-face}
N_{F_i}^{t,m}(\btau) := \int_{F_i} \btau \nu_{F_i} \varphi_{F_i,t}
\cdot e_m \qquad t = 1, \cdots, C_{k+n-1}^k,~ m=1, \cdots, n.
\end{equation}

\begin{theorem} \label{thm:face-bubble}
The basis functions for $N_{F_i}^{t,m}(\cdot)$ can be written as
\begin{equation} \label{equ:face-bubble-expression}
\bphi_{F_i}^{s,l} := \frac{1}{|F_i|} \sum_{1 \leq j \leq n+1, j\neq i}
\frac{\alpha_{ij}^l}{t_{ij} \cdot \nu_{F_i}} \lambda_j
\varphi_{j,s}^{\hat{0},\hat{i}} t_{ij}t_{ij}^T, 
\end{equation}
where $ \sum_{1\leq j\leq n+1, j\neq i} \alpha_{ij}^l t_{ij} = e_l$,
and $\varphi_{j,s}^{\hat{0},\hat{i}} \in
\cP_k^{\hat{0},\hat{i}}(K;\R)$ are uniquely determined by  
\begin{equation} \label{equ:face-projection}
\langle \varphi^{\hat{0},\hat{i}}_{j,s}, \varphi_{F_i,t}
\rangle_{\lambda_j} := \frac{1}{|F_i|}\int_{F_i} \lambda_j
\hat{\varphi}_{j,s}^{\hat{0},\hat{i}} \varphi_{F_i,t} = \delta_{st}
\qquad t = 1, \cdots, C_{n-1+k}^k,
\end{equation}
\end{theorem}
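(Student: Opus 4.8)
The plan is to verify that the functions $\bphi_{F_i}^{s,l}$ are precisely the nodal basis dual to the degrees of freedom in \eqref{equ:dof-face}, i.e. that
\begin{equation*}
N_{F_i}^{t,m}\big(\bphi_{F_i}^{s,l}\big) = \delta_{st}\,\delta_{lm},
\qquad 1 \leq s,t \leq C_{k+n-1}^k, \quad 1 \leq l,m \leq n,
\end{equation*}
since a dimension count then upgrades this duality to the assertion that they form a basis and that \eqref{equ:dof-face} is unisolvent. Before computing I would first record that both families of coefficients in \eqref{equ:face-bubble-expression} are well defined. The $n$ tangent vectors $\{t_{ij} : j \neq i\}$ are the columns of $T^{\hat{i}}$, and since $\det(T^{\hat{i}}) \neq 0$ they form a basis of $\R$-space $\R^n$; hence for each $l$ the coefficients $\alpha_{ij}^l$ with $\sum_{j\neq i}\alpha_{ij}^l t_{ij} = e_l$ exist and are unique. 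Moreover, because $\lambda_j > 0$ on the interior of $F_i$, the weighted bilinear form $\langle\cdot,\cdot\rangle_{\lambda_j}$ in \eqref{equ:face-projection} is a genuine inner product on $\cP_k(F_i;\R)$; together with the isomorphism $\mathcal{R}_i : \cP_k^{\hat{0},\hat{i}}(K;\R) \to \cP_k(F_i;\R)$ from Lemma \ref{lem:spaces} this shows the dual functions $\varphi_{j,s}^{\hat{0},\hat{i}}$ fixed by \eqref{equ:face-projection} exist and are unique. In particular each $\bphi_{F_i}^{s,l}$ indeed lies in $\tilde{\Sigma}_{k+1,h,F_i}(K)$ by \eqref{equ:1-face-bubble}.

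The second step is to compute the normal trace on $F_i$. Using the rank-one identity $t_{ij}t_{ij}^T \nu_{F_i} = (t_{ij}\cdot\nu_{F_i})\,t_{ij}$, the factor $t_{ij}\cdot\nu_{F_i}$ cancels the denominator in \eqref{equ:face-bubble-expression}, so that
\begin{equation*}
\bphi_{F_i}^{s,l}\,\nu_{F_i}\big|_{F_i}
= \frac{1}{|F_i|} \sum_{1\leq j\leq n+1,\, j\neq i}
\alpha_{ij}^l\, \lambda_j\, \varphi_{j,s}^{\hat{0},\hat{i}}\, t_{ij}.
\end{equation*}
Here I use that, on $F_i$, the polynomial $\varphi_{j,s}^{\hat{0},\hat{i}}$ is identified with its restriction $\mathcal{R}_i\varphi_{j,s}^{\hat{0},\hat{i}}\in\cP_k(F_i;\R)$ and $\lambda_j$ with the corresponding barycentric coordinate on $F_i$, following the convention adopted just before this subsection.

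Finally I would substitute this trace into \eqref{equ:dof-face} and separate the scalar and vector parts:
\begin{equation*}
N_{F_i}^{t,m}\big(\bphi_{F_i}^{s,l}\big)
= \sum_{j\neq i} \alpha_{ij}^l\,(t_{ij}\cdot e_m)\,
\frac{1}{|F_i|}\int_{F_i} \lambda_j\, \varphi_{j,s}^{\hat{0},\hat{i}}\, \varphi_{F_i,t}
= \sum_{j\neq i}\alpha_{ij}^l\,(t_{ij}\cdot e_m)\,\delta_{st},
\end{equation*}
where the last equality is exactly the defining relation \eqref{equ:face-projection} applied for each $j$ separately. Pulling out $\delta_{st}$ and invoking $\sum_{j\neq i}\alpha_{ij}^l t_{ij} = e_l$ gives $\delta_{st}\,(e_l\cdot e_m)=\delta_{st}\,\delta_{lm}$, the claimed duality. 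The argument is essentially a routine verification; the only points needing care are the cancellation of $t_{ij}\cdot\nu_{F_i}$ and the observation that the index pairs $(s,t)$ and $(l,m)$ decouple — the former handled by \eqref{equ:face-projection} term by term in $j$, the latter by the choice of $\alpha_{ij}^l$. Since the number of functions $\bphi_{F_i}^{s,l}$ equals $n\,C_{k+n-1}^k = \dim\cP_k(F_i;\R^n) = \dim\tilde{\Sigma}_{k+1,h,F_i}(K)$, the duality relation shows they are linearly independent and hence form a basis, so the degrees of freedom in \eqref{equ:dof-face} are unisolvent.
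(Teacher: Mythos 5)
Your proposal is correct and follows essentially the same route as the paper: both verify the duality relation $N_{F_i}^{t,m}(\bphi_{F_i}^{s,l})=\delta_{st}\delta_{lm}$ by cancelling the factor $t_{ij}\cdot\nu_{F_i}$ via the rank-one identity and then invoking \eqref{equ:face-projection} and $\sum_{j\neq i}\alpha_{ij}^l t_{ij}=e_l$. The extra remarks you add on the well-definedness of $\alpha_{ij}^l$ and $\varphi_{j,s}^{\hat{0},\hat{i}}$ and the closing dimension count are sound but not a different method.
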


\begin{proof}
The lemma is followed by 
\[
\begin{aligned}
N_{F_i}^{t,m}(\bphi_{F_i}^{s,l}) &= \left( \int_{F_i}
\bphi_{F_i}^{s,l}\nu_{F_i} \varphi_{F_i,t} \right) \cdot e_m \\
&= \frac{1}{|F_i|} \left( \sum_{1\leq j \leq n+1, j\neq i}
  \int_{F_i} \alpha_{ij}^l\lambda_j \varphi_{j,s}^{\hat{0},\hat{i}}
  \varphi_{F_i,t} t_{ij} \right) \cdot e_m \\
&= \delta_{st} \left( \sum_{1 \leq j \leq n+1, j\neq i} \alpha_{ij}^l
    t_{ij} \right) \cdot e_m = \delta_{st} \delta_{lm}.
\end{aligned}
\]
\end{proof}

We can have the explicit formulation of the coefficient
$\alpha_{ij}^l$ in \eqref{equ:face-bubble-expression} as follows.
\begin{lemma} \label{lem:decomposition-explicit}
Given $i$, for any vector $v$, we have 
\begin{equation} \label{equ:alpha-explicit}
v = \sum_{1\leq j \leq n+1, j\neq i} v\cdot (\nabla \lambda_j)
  |e_{ij}| t_{ij}.
\end{equation}
\end{lemma}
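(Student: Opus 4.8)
The plan is to reduce the claim to two elementary identities satisfied by the barycentric coordinates and then recombine the sum. Since the unit tangent satisfies $|e_{ij}|t_{ij} = a_j - a_i$ by definition, the asserted formula is equivalent to showing, for every vector $v$,
\[
v = \sum_{1\leq j \leq n+1,\, j \neq i} (v\cdot \nabla\lambda_j)\,(a_j - a_i),
\]
so it suffices to prove this reformulated statement. The first step is to record the two defining relations of the barycentric coordinates on $K$. Because $\sum_{l=1}^{n+1}\lambda_l \equiv 1$ and each $\lambda_l$ is affine (so $\nabla\lambda_l$ is a constant vector), taking gradients gives $\sum_{l=1}^{n+1}\nabla\lambda_l = 0$. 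Likewise, from $x = \sum_{l=1}^{n+1}\lambda_l(x)\,a_l$ for every $x$, differentiating component-wise yields the outer-product identity $\sum_{l=1}^{n+1} a_l (\nabla\lambda_l)^T = I$, and hence, applying it to $v$, $v = \sum_{l=1}^{n+1} (v\cdot \nabla\lambda_l)\, a_l$.

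Next I would evaluate the right-hand side of the reformulated claim by splitting it as $\sum_{j\neq i}(v\cdot\nabla\lambda_j)a_j$ minus $a_i\sum_{j\neq i}(v\cdot\nabla\lambda_j)$. For the first sum I would add and subtract the missing $l=i$ term and invoke the second identity to obtain $v - (v\cdot\nabla\lambda_i)a_i$. For the scalar coefficient sum I would use the first identity in the form $\sum_{j\neq i}\nabla\lambda_j = -\nabla\lambda_i$, which gives $\sum_{j\neq i}(v\cdot\nabla\lambda_j) = -(v\cdot\nabla\lambda_i)$. Substituting both into the split expression, the two contributions $(v\cdot\nabla\lambda_i)a_i$ cancel, and exactly $v$ remains, establishing the formula.

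There is no genuine obstacle here; the argument is a direct computation from the defining properties of $\{\lambda_l\}$. The only point requiring a little care is the bookkeeping forced by excluding the index $i$ from the sum: replacing $a_j$ by $a_j - a_i$ is precisely what lets the two barycentric identities combine so that the spurious $a_i$ terms telescope away. For that reason I would state both identities explicitly before the computation and make the cancellation of the $(v\cdot\nabla\lambda_i)a_i$ terms visible, so that the reduction from a sum over all $n+1$ vertices to a sum over the $n$ edges emanating from $a_i$ is transparent.
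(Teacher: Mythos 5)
Your proof is correct, and it takes a slightly different (though equally elementary) route from the paper's. You argue primally: writing $|e_{ij}|\,t_{ij}=a_j-a_i$ and invoking the two reproduction identities $\sum_{l}\nabla\lambda_l=0$ and $\sum_{l}a_l(\nabla\lambda_l)^T=I$ (the latter obtained by differentiating $x=\sum_l\lambda_l(x)a_l$), you verify the formula by direct cancellation of the two $(v\cdot\nabla\lambda_i)a_i$ contributions. The paper instead argues by duality: it tests $v\cdot\xi$ against an arbitrary $\xi=\nabla u_h$ with $u_h=\sum_l u_l\lambda_l\in\cP_1(K)$, uses $\sum_l\nabla\lambda_l=0$ to replace $u_j$ by $u_j-u_i$, and then uses $u_j-u_i=(a_j-a_i)\cdot\xi=|e_{ij}|\,t_{ij}\cdot\xi$; since $\xi$ ranges over all of $\R^n$, the identity follows. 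The two underlying facts are the same --- your outer-product identity is precisely the statement $u_j-u_i=(a_j-a_i)\cdot\nabla u_h$ for linear $u_h$, read in the transposed direction --- so the proofs are essentially dual to one another. Yours buys a fully explicit cancellation without introducing the auxiliary test function or the integral over $K$; the paper's phrasing fits the finite-element idiom of testing against $\cP_1(K)$ and makes the connection to nodal values $u_j$ visible.
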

\begin{proof}
For $u_h \in \mathcal{P}_1(K)$, we write $u_h = \sum_{i=j}^{n+1} u_j
\lambda_j$. Let $\xi = \nabla u_h \in \mathbb{R}^n$. Then, 
$$ 
\begin{aligned}
|K| v \cdot \xi &= (v, \nabla u_h)_K = \sum_{j=1}^{n+1} (v, \nabla
    \lambda_j)_K u_j = \sum_{1\leq j \leq n+1, j\neq i} (v, \nabla
      \lambda_j)_K (u_j - u_i) \\ 
    &= \sum_{1 \leq j \leq n+1, j \neq i} (v, \nabla \lambda_j)_K
    |e_{ij}| t_{ij} \cdot \xi,
\end{aligned}
$$ 
which implies \eqref{equ:alpha-explicit}.
\end{proof}

In light of Lemma \ref{lem:decomposition-explicit}, we have 
$$ 
\alpha_{ij}^l = e_l  \cdot (\nabla \lambda_j) |e_{ij}| \quad
\text{and} \quad  
\bphi_{F_i}^{s,l} = \frac{1}{|F_i|} \sum_{1 \leq j \leq n+1, j\neq i}
\frac{e_l \cdot (\nabla \lambda_j) |e_{ij}|}{t_{ij} \cdot \nu_{F_i}}
\lambda_j \varphi_{j,s}^{\hat{0},\hat{i}} t_{ij}t_{ij}^T. 
$$

\begin{remark}
For the lowest case $k=0$, we immediately obtain that $\varphi_{F_i,1}
= 1$ and $\varphi_{j,1}^{\hat{0},\hat{i}} = n, \forall i,j$ by
\eqref{equ:face-projection}. Therefore, basis functions
\eqref{equ:face-bubble-expression} have the following formulation 
\begin{equation} \label{equ:face-bubble0}
\bphi_{F_i}^{1,l} = \frac{1}{|F_i|} \sum_{1 \leq j \leq n+1, j\neq i}
\frac{n e_l \cdot (\nabla \lambda_j)|e_{ij}|
}{t_{ij}\cdot\nu_{F_i}} \lambda_j t_{ij}t_{ij}^T.
\end{equation}
\end{remark}

In light of the formulation of $\bphi_{F_i}^{s,l}$ in
\eqref{equ:face-bubble-expression}, we have the following properties
of the face-bubble $\bphi_{F_i}^{s,l}$ by standard scaling argument. 
\begin{lemma} \label{lem:scaling}
For any $K \in \cT_h$ and $F_i \subset \partial K$, we have  
\begin{subequations}
\begin{align}
\|\bphi_{F_i}^{s,l}\|_{0,K} &\lesssim h_K^{-n/2+1},   \label{equ:scaling-K0}
\\
\|\bphi_{F_i}^{s,l}\|_{\div, K} &\lesssim h_K^{-n/2}, \label{equ:scaling-K}\\
\|\bphi_{F_i}^{s,l} \nu_{F_i}\|_{0,F_i} &\lesssim h_K^{-(n-1)/2}. \label{equ:scaling-F}
\end{align}
\end{subequations}
\end{lemma}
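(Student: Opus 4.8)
The plan is to read off the homogeneity in $h_K$ of every factor appearing in the explicit formula \eqref{equ:face-bubble-expression} and then multiply. By shape regularity, the affine map $F_K(\hat{x}) = B_K\hat{x}+b_K$ from a fixed reference simplex $\hat{K}$ onto $K$ satisfies $\|B_K\|\simeq h_K$, $\|B_K^{-1}\|\simeq h_K^{-1}$ and $|\det B_K|\simeq h_K^n$, while all angle-type quantities are bounded above and below by constants depending only on the shape-regularity parameter. This reduces each of the three estimates to counting powers of $h_K$.

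First I would record the scaling of the geometric ingredients: $|F_i|\simeq h_K^{n-1}$, $|e_{ij}|\simeq h_K$, $|\nabla\lambda_j|\simeq h_K^{-1}$, $0\le\lambda_j\le 1$, $\|t_{ij}t_{ij}^T\|\lesssim 1$, and, crucially, $|t_{ij}\cdot\nu_{F_i}|\gtrsim 1$, since this is a scale-invariant cosine that is bounded away from $0$ by shape regularity (an edge emanating from $a_i$ is never parallel to $F_i$). Consequently the scalar coefficient multiplying each summand satisfies
\[
\frac{1}{|F_i|}\,\frac{|e_l\cdot(\nabla\lambda_j)|\,|e_{ij}|}{|t_{ij}\cdot\nu_{F_i}|}\lesssim h_K^{-(n-1)}\cdot\frac{h_K^{-1}\cdot h_K}{1}=h_K^{-(n-1)}.
\]

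The remaining tensor field $\lambda_j\varphi_{j,s}^{\hat{0},\hat{i}}t_{ij}t_{ij}^T$ is $O(1)$ pointwise, so I would convert this $L^\infty$ bound into the required $L^2$ bounds using $|K|\simeq h_K^n$ and $|F_i|\simeq h_K^{n-1}$. This gives \eqref{equ:scaling-K0} at once, and likewise \eqref{equ:scaling-F}, because $t_{ij}t_{ij}^T\nu_{F_i}=t_{ij}(t_{ij}\cdot\nu_{F_i})$ is again $O(1)$ on $F_i$. For \eqref{equ:scaling-K} I would exploit that $t_{ij}t_{ij}^T$ is constant, whence $\div(\lambda_j\varphi_{j,s}^{\hat{0},\hat{i}}t_{ij}t_{ij}^T)=t_{ij}t_{ij}^T\nabla(\lambda_j\varphi_{j,s}^{\hat{0},\hat{i}})$; each spatial derivative costs one factor $h_K^{-1}$, so the divergence carries exactly one extra power of $h_K^{-1}$ relative to \eqref{equ:scaling-K0}. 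Combining with $\|\bphi_{F_i}^{s,l}\|_{0,K}\lesssim h_K^{-n/2+1}\le h_K^{-n/2}$ then yields the stated $H(\div)$ bound.

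The one point that is not pure bookkeeping, and hence the main obstacle, is the uniform control of the implicitly defined factor $\varphi_{j,s}^{\hat{0},\hat{i}}\in\cP_k^{\hat{0},\hat{i}}(K;\R)$. It is fixed by the weighted conditions \eqref{equ:face-projection}, whose normalization is arranged to be scale-invariant (recall $\frac{1}{|F_i|}\int_{F_i}\varphi_{F_i,t}^2=1$). I would therefore transport the problem to $\hat{K}$, where \eqref{equ:face-projection} becomes a fixed linear system on the finite-dimensional space $\cP_k(\hat{F}_i;\R)$ governed by the weighted form $\langle\cdot,\cdot\rangle_{\lambda_j}$. Since $\lambda_j$ is a fixed barycentric weight that is strictly positive on the interior of $\hat{F}_i$, this bilinear form is positive definite on $\cP_k(\hat{F}_i;\R)$ and, by finite dimensionality, uniformly equivalent to the $L^2$ inner product with constants depending only on $n$ and $k$. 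Hence the reference coefficients, and thus $\|\varphi_{j,s}^{\hat{0},\hat{i}}\|_{L^\infty(K)}$, are bounded uniformly over the shape-regular family, which is precisely what legitimizes treating $\lambda_j\varphi_{j,s}^{\hat{0},\hat{i}}t_{ij}t_{ij}^T$ as an $O(1)$ quantity in the steps above.
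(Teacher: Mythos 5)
Your proposal is correct and is exactly the ``standard scaling argument'' that the paper invokes without writing out: count powers of $h_K$ in each factor of \eqref{equ:face-bubble-expression}, using $|F_i|\simeq h_K^{n-1}$, $\alpha_{ij}^l=e_l\cdot(\nabla\lambda_j)|e_{ij}|\lesssim 1$, $|t_{ij}\cdot\nu_{F_i}|\gtrsim 1$ by shape regularity, and one extra $h_K^{-1}$ for the divergence. Your treatment of the only genuinely non-bookkeeping point --- the uniform $L^\infty$ bound on $\varphi_{j,s}^{\hat 0,\hat i}$ via the affine invariance of the weighted form $\langle\cdot,\cdot\rangle_{\lambda_j}$ and its positive definiteness on the finite-dimensional space $\cP_k(F_i;\R)$ --- is the right way to close the gap the paper leaves implicit.
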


\section{Stability and Approximation Property} 
\label{sec:fem}
For the discretization of displacement, the most natural space is the
full $C^{-1}-\cP_k$ space 
\begin{equation} \label{equ:u-space}
V_{k,h} := \{v\in L^2(\Omega;\R^n)~|~v|_K \in \cP_k(K; \R^n)\}.
\end{equation}
For the discretization of symmetric stress, we try to find some
good approximation spaces under the constrain that the degree of
polynomials are at most $k+1$. To this end, we will discuss the
effects of different components in the local decomposition
\eqref{equ:local-decomposition}.

\subsection{Stability for $R_k^{\perp}$: conforming div-bubble
function spaces}

Combine the local conforming $\div$-bubble functions in
\eqref{equ:local-c-div-bubble} element by element, we obtain the
{\em conforming $\div$-bubble function spaces} 
\begin{equation} \label{equ:conforming-bubble}
\Sigma_{k+1,h,b} := \{\btau ~|~ \btau|_K \in \Sigma_{k+1,h,b}(K),~\forall
K\in \cT_h\},
\end{equation}
which satisfies the $\btau \nu_F = 0$ for any $F\in \cF_h$. Hu
\cite{hu2015finite} also proved that $\Sigma_{k+1,h,b}$ are exactly
the full $H(\div;\mS)$ bubble function spaces. We note that the
conforming $\div$-bubble spaces are non-trivial when the degrees of
stress tensor spaces are quadratic at least ($k+1 \geq 2$).
$\Sigma_{k+1,h,b}$ was introduced in \cite{hu2015finite} to control
the orthogonal complement of the rigid motion space. Precisely, let 
\begin{equation} \label{equ:rigid-motion}
\begin{aligned}
R_k(K) &:= \{v\in \cP_k(K;\R^n)~|~(\nabla v + \nabla v^T)/2 = 0\}, \\
R_k &:= \{v\in V_{k,h}~|~ v|_K \in R_k(K),~\forall K \in \cT_h\},
\end{aligned}
\end{equation}
and 
\begin{equation} \label{equ:rigid-perp}
\begin{aligned}
R^{\perp}_k(K) &:= \{v\in \cP_k(K;\R^n)~|~(v,w)_K = 0 \text{~for any~}
w\in R(K)\}, \\
R^{\perp}_k &:= \{v \in V_{k,h} ~|~ v|_K \in R^{\perp}_k(K),~\forall K
\in \cT_h\}.
\end{aligned}
\end{equation}
It is easy to check that $R_0 = V_{0,h}$, namely the rigid motion
space in lowest order is piecewise constant vector space. Together
with the higher order case given by Hu \cite{hu2015finite}, we have
the following lemma.  

\begin{lemma} \label{lem:conforming-bubble}
It holds that 
\begin{equation} \label{equ:div-rigid-perp}
\div \Sigma_{k+1,h,b} = R^{\perp}_k \qquad \forall k \geq 0.
\end{equation}
\end{lemma}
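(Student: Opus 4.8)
The plan is to reduce the global identity \eqref{equ:div-rigid-perp} to a purely local one and then prove two inclusions. By definition \eqref{equ:conforming-bubble}, $\Sigma_{k+1,h,b}$ is exactly the direct sum of the local spaces $\Sigma_{k+1,h,b}(K)$, and as observed after \eqref{equ:conforming-bubble} every such $\btau$ satisfies $\btau\nu_F=0$ for all $F\in\cF_h$. Hence $\btau\in H(\div;\mS)$ and the distributional divergence has no face contributions, so $\div\btau|_K=\div(\btau|_K)\in\cP_k(K;\R^n)$. Since $R^\perp_k$ is likewise defined elementwise in \eqref{equ:rigid-perp}, the identity \eqref{equ:div-rigid-perp} is equivalent to the local statement $\div\Sigma_{k+1,h,b}(K)=R^\perp_k(K)$ for each fixed $K\in\cT_h$. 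The lowest order case $k=0$ is trivial, because $\cP_{k-1}=\{0\}$ forces $\Sigma_{1,h,b}(K)=\{0\}$ while $R_0(K)=\cP_0(K;\R^n)$ gives $R^\perp_0(K)=\{0\}$, so I assume $k\geq 1$ below.

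For the inclusion $\div\Sigma_{k+1,h,b}(K)\subseteq R^\perp_k(K)$, I take $\btau\in\Sigma_{k+1,h,b}(K)$ and an arbitrary rigid motion $w\in R_k(K)$. Integration by parts gives $(\div\btau,w)_K=-(\btau,\nabla w)_K+\langle\btau\nu,w\rangle_{\partial K}$; the symmetry of $\btau$ lets me replace $\nabla w$ by $\bepsilon(w)$, which vanishes by the definition \eqref{equ:rigid-motion} of $R_k(K)$, and the boundary term vanishes because $\btau\nu_F=0$ on every face $F\subset\partial K$. Thus $\div\btau\perp R_k(K)$, i.e. $\div\btau\in R^\perp_k(K)$.

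For the reverse inclusion I argue via orthogonal complements rather than by counting, since the two spaces generally have different dimensions (for $n=2$, $k=3$ one gets $\dim\Sigma_{4,h,b}(K)=18\neq 17=\dim R^\perp_3(K)$, so $\div$ is not injective on the bubble space). Set $W:=\div\Sigma_{k+1,h,b}(K)\subseteq\cP_k(K;\R^n)$. The same integration by parts shows, for $v\in\cP_k(K;\R^n)$, that $v\perp W$ iff $(\bepsilon(v),\btau)_K=0$ for all $\btau\in\Sigma_{k+1,h,b}(K)$. Writing a general bubble from \eqref{equ:local-c-div-bubble} as $\btau=\sum_{i<j}\lambda_i\lambda_j\,q_{ij}\,t_{ij}t_{ij}^T$ with independent $q_{ij}\in\cP_{k-1}(K;\R)$, this orthogonality decouples into $\int_K\lambda_i\lambda_j\,q_{ij}\,(t_{ij}^T\bepsilon(v)t_{ij})=0$ for every $i<j$ and every $q_{ij}$. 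The decisive step is to choose $q_{ij}=t_{ij}^T\bepsilon(v)t_{ij}\in\cP_{k-1}(K;\R)$ and invoke $\lambda_i\lambda_j>0$ on the interior of $K$, which forces $t_{ij}^T\bepsilon(v)t_{ij}=0$ pointwise for all $i<j$. Since $t_{ij}^T M t_{ij}=M:t_{ij}t_{ij}^T$ and $\{t_{ij}t_{ij}^T\}$ is a basis of $\mS$ by Lemma \ref{lem:basis-S}, the tensor $\bepsilon(v)(x)$ is orthogonal to a basis of $\mS$ at each $x$, whence $\bepsilon(v)=0$ and $v\in R_k(K)$. Therefore $W^\perp=R_k(K)$, so $W=R^\perp_k(K)$, which closes the reverse inclusion. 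The main obstacle is exactly this reverse inclusion: it cannot be settled by a dimension argument, and it hinges on combining the strict interior positivity of the products $\lambda_i\lambda_j$ with the pointwise nondegeneracy of the rank-one basis $t_{ij}t_{ij}^T$ from Lemma \ref{lem:basis-S}.
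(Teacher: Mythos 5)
Your proposal is correct and follows essentially the same route as the paper: the inclusion $\div\Sigma_{k+1,h,b}(K)\subseteq R_k^\perp(K)$ by integration by parts (using $\btau\nu=0$ on $\partial K$ and $\bepsilon(w)=0$), and the reverse inclusion by testing against the bubble $\sum_{i<j}\lambda_i\lambda_j\,q_{ij}\,t_{ij}t_{ij}^T$ with $q_{ij}=t_{ij}^T\bepsilon(v)t_{ij}=\bepsilon(v):t_{ij}t_{ij}^T$, which is exactly the paper's choice expressed without the dual basis $M_{ij}$, and then invoking Lemma~\ref{lem:basis-S}. Your per-pair decoupling and the explicit remark that $\div$ need not be injective on the bubble space are harmless refinements of the same argument.
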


\begin{proof}
The proof is presented here for the completeness. First,
\eqref{equ:div-rigid-perp} is trivially true for $k=0$. Now, we assume
$k\geq 1$. The definition of $R_k$ implies $\div \Sigma_{k+1,h,b}
\subset R_k^{\perp}$. Next we prove that only the zero function $v\in
R_k^{\perp}(K)$ satisfies 
\begin{equation} \label{equ:div-conforming-bubble}
\int_{K} \div \btau \cdot v = -\int_{K} \btau : \bepsilon(v) =  0
\qquad \forall \btau\in \Sigma_{k+1,h,b}(K).
\end{equation}
By Lemma \ref{lem:basis-S}, there exists a basis of $\mS$ dual to
$\{t_{ij}t_{ij}^T, 1 \leq i < j \leq n+1\}$ under the inner product
$\langle A, B \rangle := A:B$, denoted as $\{M_{ij}, 1\leq i<j\leq
n+1\}$. Notice that $\bepsilon(v) \in \cP_{k-1}(K;\mS)$, let 
$$
\bepsilon(v) = \sum_{1\leq i < j \leq n+1} q_{ij}M_{ij} \qquad
q_{ij}\in \cP_{k-1}(\R).
$$ 
Take $\btau = \sum_{1\leq i<j \leq n+1} \lambda_i\lambda_j q_{ij}
t_{ij}t_{ij}^T$ in \eqref{equ:div-conforming-bubble} to have 
\[
0 = \sum_{1\leq i < j \leq n+1} \int_K \lambda_i \lambda_j q_{ij}^2, 
\]
which implies $q_{ij} = 0$, thus $v\in R_k(K) \cap R_k^{\perp}(K) = 0$. 
\end{proof}

It follows from the definition of $R_k$ and $R_k^{\perp}$ that
$V_{k,h} = R_k \oplus R_k^{\perp}$. Therefore, for any given $v_h \in
V_{k,h}$, there uniquely exist $v_{h,R} \in R_k$ and $v_{h,R^{\perp}}
\in R_k^{\perp}$ such that $v_h = v_{h,R} + v_{h,R^{\perp}}$. By $L^2$
orthogonality,  
$$ 
\|v_{h,R}\|_{0}^2 + \|v_{h,R^{\perp}}\|_0^2 = \|v_h\|_0^2.
$$ 
When constructing the stable pair $\Sigma_{k+1,h}-V_{k,h}$ of mixed
finite elements for elasticity, one key step is to find the stable
$\btau_h \in \Sigma_{k+1,h}$ that $\div \btau_h = v_h$. The following
lemma implies that the conforming $\div$-bubble spaces solve the
orthogonal complement of the rigid motion. 

\begin{lemma} \label{lem:v-perp}
For any $v_{h,R^{\perp}} \in R_k^{\perp}(K)$, there exists $\btau_1
\in \Sigma_{k+1,h,b}(K)$ such that 
\begin{equation} \label{equ:v-perp}
\div \btau_1 = v_{h,R^{\perp}} \qquad \|\btau_1\|_{\div} \lesssim
\|v_{h,R^{\perp}}\|_0.
\end{equation}
\end{lemma}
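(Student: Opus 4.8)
The plan is to obtain $\btau_1$ as a stable right inverse of the divergence on the local conforming div-bubble space, relying on Lemma~\ref{lem:conforming-bubble} for surjectivity and on a scaling argument for the norm bound. First I would invoke Lemma~\ref{lem:conforming-bubble}, which gives $\div\,\Sigma_{k+1,h,b}(K)=R_k^\perp(K)$; hence for the prescribed $v_{h,R^\perp}\in R_k^\perp(K)$ there is at least one $\btau_1\in\Sigma_{k+1,h,b}(K)$ with $\div\btau_1=v_{h,R^\perp}$, and among all such preimages I would select the one of minimal $L^2$ norm. Since $\div\btau_1=v_{h,R^\perp}$ holds exactly, we automatically have $\|\div\btau_1\|_{0,K}=\|v_{h,R^\perp}\|_{0,K}$, so that $\|\btau_1\|_{\div,K}^2=\|\btau_1\|_{0,K}^2+\|v_{h,R^\perp}\|_{0,K}^2$ and the whole claim reduces to the single estimate $\|\btau_1\|_{0,K}\lesssim\|v_{h,R^\perp}\|_{0,K}$.

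Next I would prove this estimate by a standard scaling argument to the reference simplex $\hat K$. On $\hat K$ the minimal-norm right inverse $S_{\hat K}\colon R_k^\perp(\hat K)\to\Sigma_{k+1,h,b}(\hat K)$ of $\div$ is a linear map between finite-dimensional spaces, so $\|S_{\hat K}\hat g\|_{0,\hat K}\le\hat C\|\hat g\|_{0,\hat K}$ with $\hat C=\hat C(n,k)$. To transport this to $K$, let $F_K(\hat x)=B\hat x+b$ with $\|B\|\simeq h_K$, $\|B^{-1}\|\simeq h_K^{-1}$, $|\det B|\simeq h_K^n$, and use the matrix (contravariant) Piola transform $\mathcal P_K\hat\btau:=\tfrac1{\det B}B\hat\btau B^T$. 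This map preserves symmetry, the polynomial degree $k+1$, and the vanishing normal trace $\btau\nu_F=0$; since $\Sigma_{k+1,h,b}(K)$ is exactly the space of all such tensors by the characterization recalled above, $\mathcal P_K$ is a bijection from $\Sigma_{k+1,h,b}(\hat K)$ onto $\Sigma_{k+1,h,b}(K)$. It satisfies the intertwining identity $\div(\mathcal P_K\hat\btau)=\tfrac1{\det B}\,B\,(\widehat{\div}\,\hat\btau)\circ F_K^{-1}$, which induces the displacement map $\Psi\hat g:=\tfrac1{\det B}B(\hat g\circ F_K^{-1})$; applying Lemma~\ref{lem:conforming-bubble} on both $\hat K$ and $K$ shows that $\Psi$ restricts to an isomorphism $R_k^\perp(\hat K)\to R_k^\perp(K)$. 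I would then set $\hat g:=\Psi^{-1}v_{h,R^\perp}$, $\hat\btau:=S_{\hat K}\hat g$, and $\btau_1:=\mathcal P_K\hat\btau$, so that $\div\btau_1=\Psi\,\widehat{\div}\,\hat\btau=\Psi\hat g=v_{h,R^\perp}$. Tracking the powers of $h_K$ through the change of variables gives $\|\btau_1\|_{0,K}\lesssim h_K^{2-n/2}\|\hat\btau\|_{0,\hat K}\lesssim\hat C\,h_K^{2-n/2}\|\hat g\|_{0,\hat K}\lesssim\hat C\,h_K\,\|v_{h,R^\perp}\|_{0,K}$, which is even stronger than what is claimed (the $L^2$ part of $\btau_1$ carries a favorable extra factor $h_K$, while the dominant contribution to $\|\btau_1\|_{\div,K}$ is the exact term $\|\div\btau_1\|_{0,K}=\|v_{h,R^\perp}\|_{0,K}$).

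The only genuinely delicate point is the transfer of the complement space $R_k^\perp$ under scaling: the contravariant Piola transform does not send rigid motions to rigid motions, so it is not a priori clear that $\Psi(R_k^\perp(\hat K))=R_k^\perp(K)$. I expect this to be the main obstacle, and the clean way around it is precisely the range identity above—both ranges of $\div$ equal the respective $R_k^\perp$ by Lemma~\ref{lem:conforming-bubble}, and the Piola intertwining then forces $\Psi$ to carry one range onto the other, independently of its action on rigid motions. As an alternative that sidesteps a dedicated reference-element inverse, one could instead extract a scale-robust inf-sup constant directly from the explicit test field $\btau=\sum_{i<j}\lambda_i\lambda_j q_{ij}t_{ij}t_{ij}^T$ used in the proof of Lemma~\ref{lem:conforming-bubble}, for which $(\div\btau,v_{h,R^\perp})_K=-\sum_{i<j}\int_K\lambda_i\lambda_j q_{ij}^2$, and then invoke the standard equivalence between a uniform inf-sup bound and the existence of a bounded right inverse; the scaling bookkeeping is identical.
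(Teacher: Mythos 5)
Your proposal is correct and follows essentially the same route as the paper: surjectivity of $\div$ from Lemma~\ref{lem:conforming-bubble}, selection of a (minimal-norm) right inverse, and a scaling argument for the bound. The paper simply states ``scaling argument'' where you carry out the Piola-transform bookkeeping explicitly (and correctly, including the resolution of the $R_k^\perp$ transfer via the range identity), so your version is a more detailed rendering of the same proof.
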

\begin{proof}
It follows from Lemma \ref{lem:conforming-bubble} that $\div:
\Sigma_{k+1,h,b}(K) \mapsto R_k^{\perp}(K)$ is onto. Then the quotient
mapping $\tilde{\div}: \Sigma_{k+1,h,b}(K) / \ker(\div) \mapsto
R_k^{\perp}(K)$ is isomorphism. Therefore, there uniquely exists
$\btau_1\in \ker(\div)^{\perp} \cap \Sigma_{k+1,h,b}(K)$ such that 
\[
\div \btau_1 = v_{h,R^{\perp}}. 
\]
It then follows from the definition of $\btau_1$ and scaling argument
that 
\[
\|\btau_1\|_{\div} \lesssim \|\div \btau_1\|_0  = \|v_{h,R^\perp}\|_0.
\]
\end{proof}

\subsection{Stability for $R_k$: face-bubble function
spaces}

In light of Lemma \ref{lem:v-perp}, the remaining question for
stability is to solve the rigid motion, namely to find the stable
$\btau_2 \in \Sigma_{k+1,h}$ that $\div_h \btau_2 = v_{h,R}$. Here
$\div_h$ is the $\div$ operator element by element.  And the discrete
$\div$ norm is denoted by 
\[
\|\btau\|_{\div, h}^2 := \sum_{K\in \cT_h} \left(\|\btau
\|_{0,K}^2 + \|\div \btau\|_{0,K}^2 \right) \qquad
\forall \btau \in \Sigma_{k+1,h} \cup \Sigma.
\]
The stability of mixed finite elements for linear elasticity comes
down to the following lemma. 

\begin{lemma} \label{lem:inf-sup2}
Assume that $\Sigma_{k+1,h} \subset L^2(\Omega;\mS)$ is any space
equipped with norm $\vertiii{\cdot}$ that satisfies: 
\begin{enumerate} 
\item $\Sigma_{k+1,h,b} \subset \Sigma_{k+1,h}$;
\item $\|\btau\|_{\div,h} \lesssim \vertiii{\btau}$, for all
$\btau \in \Sigma_{k+1,h}$; 
\item $\|\btau\|_{\div,h} \simeq \vertiii{\btau}$, for all
$\btau\in H(\div;\mS)$. 
\end{enumerate} 
Then the following two statements are equivalent: 
\begin{enumerate}
\item For any $v_h \in V_{k,h}$, there exists $\btau_h \in \Sigma_{k+1,h}$
such that 
\begin{equation} \label{equ:inf-sup2}
\div_h \btau_h = v_h \qquad \vertiii{\btau_h} \lesssim \|v_h\|_0.
\end{equation}
\item For any $v_{h,R} \in R_k$, there exists $\btau_2 \in
\Sigma_{k+1,h}$ such that  
\begin{equation} \label{equ:face-motivation} 
\int_{\partial K} \btau_2 \nu \cdot p = \int_K v_{h,R} \cdot p, \quad
\forall p \in R_k(K) \quad \text{and} \quad \vertiii{\btau_2}
\lesssim \|v_{h,R}\|_0.
\end{equation}
\end{enumerate}
Furthermore, a sufficient condition for the above two statements:
there exists a linear operater $\Pi_2: H^1(\Omega;\mS) \mapsto
\Sigma_{k+1,h}$ such that the following diagram is commutative 
\begin{equation} \label{equ:diagram2} 
\begin{CD}
H^1(\Omega;\mS) @> {\div} >> L^2(\Omega;\R^n) \\
@VV \Pi_2 V  @VV P_R V \\
\Sigma_{k+1,h} @> {\div_h} >> R_{k}
\end{CD}
\end{equation}
where $P_R$ is the $L^2$ projection from $L^2(\Omega;\R^n)$ to $R_k$.
\end{lemma}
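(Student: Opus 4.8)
The plan is to establish the equivalence by proving both implications and then to verify that the commutative diagram furnishes statement (2); throughout I will exploit the $L^2$-orthogonal splitting $v_h = v_{h,R} + v_{h,R^\perp}$ together with Lemma \ref{lem:v-perp}. The direction (1) $\Rightarrow$ (2) is immediate: given $v_{h,R} \in R_k$, I regard it as an element of $V_{k,h}$ and apply (1) to obtain $\btau_h \in \Sigma_{k+1,h}$ with $\div_h \btau_h = v_{h,R}$ and $\vertiii{\btau_h} \lesssim \|v_{h,R}\|_0$. Integrating by parts elementwise against any $p \in R_k(K)$ and using $\bepsilon(p) = 0$ with the symmetry of $\btau_h$ gives $\int_{\partial K}\btau_h \nu \cdot p = \int_K \div\btau_h \cdot p = \int_K v_{h,R}\cdot p$, so $\btau_2 = \btau_h$ verifies (2).

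For the converse (2) $\Rightarrow$ (1) I will construct $\btau_h$ by a two-step correction. Given $v_h = v_{h,R} + v_{h,R^\perp}$, I first apply Lemma \ref{lem:v-perp} elementwise to produce $\btau_1 \in \Sigma_{k+1,h,b} \subset \Sigma_{k+1,h}$ with $\div\btau_1 = v_{h,R^\perp}$; since $\btau_1 \in H(\div;\mS)$ has vanishing normal traces, assumption (3) upgrades this to $\vertiii{\btau_1} \simeq \|\btau_1\|_{\div,h} \lesssim \|v_{h,R^\perp}\|_0$. I then take $\btau_2$ from (2) and observe, again by integration by parts, that the face condition in (2) is precisely $P_R(\div_h\btau_2) = v_{h,R}$. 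Hence the residual $w := v_{h,R} - \div_h\btau_2$ lies in $R_k^\perp$, with $\|w\|_0 \le \|v_{h,R}\|_0 + \|\btau_2\|_{\div,h} \lesssim \|v_{h,R}\|_0$ by assumption (2). Applying Lemma \ref{lem:v-perp} once more yields $\btau_3 \in \Sigma_{k+1,h,b}$ with $\div\btau_3 = w$ and $\vertiii{\btau_3}\lesssim\|w\|_0$. Setting $\btau_h := \btau_1 + \btau_2 + \btau_3$ gives $\div_h\btau_h = v_{h,R^\perp} + \div_h\btau_2 + w = v_h$, and $\vertiii{\btau_h} \lesssim \|v_{h,R^\perp}\|_0 + \|v_{h,R}\|_0 \lesssim \|v_h\|_0$ by orthogonality.

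To prove the sufficiency of the diagram I reduce to statement (2). I will invoke the classical continuous stability of elasticity: for any $g \in L^2(\Omega;\R^n)$ there exists $\btau \in H^1(\Omega;\mS)$ with $\div\btau = g$ and $\|\btau\|_1 \lesssim \|g\|_0$. Choosing $g = v_{h,R}$ and setting $\btau_2 := \Pi_2\btau$, the commutativity of the diagram gives $\div_h\btau_2 = P_R\div\btau = P_R v_{h,R} = v_{h,R}$, so the face condition of (2) follows at once by integration by parts; the accompanying (standard) boundedness $\vertiii{\Pi_2\btau} \lesssim \|\btau\|_1$ then delivers $\vertiii{\btau_2} \lesssim \|v_{h,R}\|_0$.

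I expect the converse direction (2) $\Rightarrow$ (1) to be the main obstacle. Statement (2) only pins down the rigid-motion part of $\div_h\btau_2$ through the face integrals against $R_k(K)$, so the crucial realization is that the leftover $v_{h,R} - \div_h\btau_2$ is automatically $L^2$-orthogonal to $R_k$ and can therefore be absorbed by the conforming div-bubbles. Ensuring that this residual lands exactly in $R_k^\perp$, rather than in all of $V_{k,h}$, is what makes the two-step correction close, and it is precisely here that assumptions (1) (through $\Sigma_{k+1,h,b} \subset \Sigma_{k+1,h}$) and (3) are genuinely needed.
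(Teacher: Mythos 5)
Your proof is correct and follows essentially the same route as the paper: (1) $\Rightarrow$ (2) by restriction to $v_{h,R}$ and elementwise integration by parts, (2) $\Rightarrow$ (1) by combining $\btau_2$ from the face condition with conforming div-bubble corrections via Lemma \ref{lem:v-perp} after observing that the residual is $L^2$-orthogonal to $R_k$, and the diagram's sufficiency by lifting $v_{h,R}$ through the continuous stability result and applying $\Pi_2$. The only cosmetic difference is that you split the bubble correction into two pieces ($\btau_1$ for $v_{h,R^\perp}$ and $\btau_3$ for $v_{h,R}-\div_h\btau_2$) where the paper absorbs the full residual $v_h-\div_h\btau_2\in R_k^\perp$ in a single application of Lemma \ref{lem:v-perp}; both versions rely on the same implicit boundedness of $\Pi_2$.
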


\begin{proof}
It is easy to check that \eqref{equ:face-motivation} can be derived
from \eqref{equ:inf-sup2} by taking $v_h = v_{h,R}$. On the other
hand, by the stability of continuous formulation (see
\cite{arnold2002mixed, arnold2008finite} for 2D and 3D cases), for
any $v_h \in V_{k,h}$, there exists $\btau \in H^1(\Omega; \mS)$, such
that 
$$
\div \btau = v_h \qquad \|\btau\|_1 \lesssim \|v_h\|_0. 
$$
Let $v_h = v_{h,R} + v_{h,R^\perp} \in R_k \oplus R_k^{\perp}$.  In
light of \eqref{equ:face-motivation}, there exists $\btau_2\in
\Sigma_{k+1,h}$ such that 
$$
\int_{\partial K} \btau_2\nu \cdot p = \int_K v_{h,R} \cdot p, \quad
\forall p \in R_k(K) \quad \text{and} \quad 
\vertiii{\btau_2} \lesssim \|v_{h,R}\|_0 \leq \|v_h\|_0.
$$
Or, 
\[
\int_{K} (\div\btau - \div\btau_2)\cdot p = 0 \qquad \forall p\in
R_k(K).
\]
which yields $v_h - \div_h \btau_2 \in R_k^{\perp}$. Then it
follows from Lemma \ref{lem:v-perp} that there exists $\btau_1\in
\Sigma_{k+1,h,b}$ such that
\[
\div \btau_1 = v_h - \div_h \btau_2 \qquad \|\btau_1\|_{\div} \lesssim
\|\div_h \btau_2 - v_h\|_0 \leq \|\btau_2\|_{\div,h} + \|v_h\|_0
\lesssim \|v_h\|_0. 
\]
Let $\btau_h = \btau_1 + \btau_2$ so that $\div_h \btau_h = v_h$ and  
\[
\vertiii{\btau_h} \lesssim \|\btau_1\|_{\div} +
\vertiii{\btau_2}\lesssim \|v_h\|_0.
\]

For any $v_{h,R} \in R_k$, in light of the stability of continuous
formulation again, there exists $\tilde{\btau} \in H^1(\Omega;\mS)$
such that 
$$ 
\div \tilde{\btau} = v_{h,R} \qquad \|\tilde{\btau}\|_1 \lesssim
\|v_{h,R}\|_0.
$$ 
By taking $\btau_2 = -\Pi_2 \tilde{\btau}$ in the commutative diagram
\eqref{equ:diagram2}, we immediately have 
$$ 
\int_{\partial K} \btau_2 \nu \cdot p = -\int_{K} \div_h(\btau_2)
\cdot p = \int_{K} \div \tilde{\btau} \cdot p = \int_K v_{h,R}\cdot p
  \qquad \forall p\in R_k(K),
$$ 
and 
$$ 
\vertiii{\btau_2} \lesssim \|\Pi_2\| \|\tilde{\btau}\|_1
\lesssim \|v_{h,R}\|_0,
$$ 
which give rise to \eqref{equ:inf-sup2}.
\end{proof}

Lemma \ref{lem:inf-sup2} motivates us to find proper face-bubble
function spaces to meet \eqref{equ:face-motivation}. We will use the
terminology ``recover'', which means finding a suitable face-bubble
function space such that the solution $\btau_2$ of
\eqref{equ:face-motivation} exists.

In light of \eqref{equ:dof-face1}, $\tilde{\Sigma}_{k+1,h,f}(K)$ can
be glued together to obtain the {\em face-bubble function spaces} as
follows. 
\begin{equation} \label{equ:face-bubble-spaces}
\tilde{\Sigma}_{k+1,h,f} := 
\left\{ 
\btau~ \left|
\begin{aligned}
&~\btau|_K \in
\tilde{\Sigma}_{k+1,h,f}(K), \text{~and the moments of~} \btau \nu_F \\
& \text{~up to degree~} k \text{~are continuous across
  the interior faces} 
\end{aligned} 
\right.
\right\}.
\end{equation}
We will prove later that $\tilde{\Sigma}_{k+1,h,f}$ are able to
recover the $\cP_k(F;\R^n)$, which meet the requirement
\eqref{equ:face-motivation} since $R_k|_F \subset \cP_k(F;\R^n)$. The
discussion on the proper subspace of $\tilde{\Sigma}_{k+1,h,f}$ to
meet \eqref{equ:face-motivation} will be given in Section
\ref{sec:reduced}.

\subsection{Approximation property option I: nonconforming div-bubble
  function spaces}

Obviously, the spaces $\Sigma_{k+1,h,b}\oplus
\tilde{\Sigma}_{k+1,h,f}$ do not have the approximation property.
Based on the local representation \eqref{equ:local-decomposition}, our
first option is to add the {\em nonconforming div-bubble function
spaces} by combining the $\tilde{\Sigma}_{k+1,h,b}(K)$ element by
element: 
\begin{equation} \label{equ:div-bubble-spaces}
\tilde{\Sigma}_{k+1,h,b} := \{\btau~|~\btau|_K \in
\tilde{\Sigma}_{k+1,h,b}(K), ~\forall K\in\cT_h \}.
\end{equation}
Then we have the following fully nonconforming spaces.

\paragraph{\underline{Fully Nonconforming Spaces}}
The first class of finite element spaces $\Sigma_{k+1,h}^{(1)}$ for
symmetric stress tensors can be written as 
\begin{equation} \label{equ:tensor-space1}
\begin{aligned}
\Sigma_{k+1,h}^{(1)} &:= \Sigma_{k+1,h,b} \oplus
\tilde{\Sigma}_{k+1,h,f} \oplus \tilde{\Sigma}_{k+1,h,b} \\ 
&= \{\btau = \btau_b + \tilde{\btau}_f + \tilde{\btau}_b~|~
\btau_b \in \Sigma_{k+1,h,b}, \tilde{\btau}_f \in
\tilde{\Sigma}_{k+1,h,f}, \tilde{\btau}_b \in
\tilde{\Sigma}_{k+1,h,b}\} \\
&= \{\btau ~|~ \btau|_K \in \cP_{k+1}(K;\mS), \text{ and the moments
  of } \btau\nu_F \\
& \qquad \quad \text{~up to degree~} k \text{~are continuous across
  the interior faces} 
  \}.
\end{aligned}
\end{equation}

The last equality is derived from the following lemma. Let $\mV, \mF,
\mF^i, \mT$ denote, respectively, the number of vertices, faces,
interior faces and simplexes in the triangulation. 

\begin{lemma} \label{lem:space1}
It holds that 
\[
\begin{aligned}
\Sigma_{k+1,h}^{(1)} 
&= \{\btau ~|~ \btau|_K \in \cP_{k+1}(K;\mS),
  \text{~and the moments of~} \btau\nu_F \\
& \qquad \quad \text{~up to degree~} k \text{~are continuous across
  the interior faces} 
  \}.
\end{aligned}
\]
\end{lemma}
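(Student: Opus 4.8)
The plan is to prove the asserted set equality by double inclusion, using the per-element direct sum from Theorem~\ref{thm:local-decomposition} together with the normal-trace behaviour of the three local building blocks. Throughout, write $X$ for the right-hand side, i.e. the space of $\btau$ with $\btau|_K \in \cP_{k+1}(K;\mS)$ on every $K$ and with the moments of $\btau\nu_F$ up to degree $k$ continuous across every interior face.

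The technical heart of the argument, which I would isolate first, is the claim that every $\btau \in \tilde{\Sigma}_{k+1,h,b}(K)$ has moments of $\btau\nu_{F_s}$ up to degree $k$ equal to zero on each face $F_s$ of $K$. By linearity it suffices to check this on a single summand $\btau = (\mathcal{E}_j^{\hat i} q)\, t_{ij}t_{ij}^T$ with $q \in \cP_k^\perp(F_j;\R)$. For $s \notin \{i,j\}$ this is immediate from \eqref{equ:basis-normal}, since $t_{ij}t_{ij}^T \nu_{F_s} = 0$. On $F_j$ one has $\btau\nu_{F_j} = (\mathcal{R}_j \mathcal{E}_j^{\hat i} q)\,(t_{ij}\cdot\nu_{F_j})\, t_{ij} = q\,(t_{ij}\cdot\nu_{F_j})\, t_{ij}$ by property~1 of Lemma~\ref{lem:spaces}, and testing against any $\mu \in \cP_k(F_j;\R^n)$ gives zero because $q \perp \cP_k(F_j;\R)$ by the definition of $\cP_k^\perp$. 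On $F_i$ the trace is $(\mathcal{R}_i \mathcal{E}_j^{\hat i} q)\,(t_{ij}\cdot\nu_{F_i})\,t_{ij}$, and the same change of variables through the affine map $A^{i,j}: F_i \mapsto F_j$ used in the proof of \eqref{equ:symmetric-ij} converts a degree-$\le k$ moment on $F_i$ into $\det(DA^{i,j})^{-1}\int_{F_j} q\,\tilde\mu$ with $\tilde\mu \in \cP_k(F_j;\R)$, which again vanishes. This is the step I expect to be the main obstacle, since it is exactly where the choice of the orthogonal complement $\cP_k^\perp$, rather than all of $\cP_{k+1}(F_j;\R)$, is essential.

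Granting this claim, the inclusion $\Sigma_{k+1,h}^{(1)} \subseteq X$ is routine: for $\btau = \btau_b + \tilde\btau_f + \tilde\btau_b$, each summand satisfies $\cdot|_K \in \cP_{k+1}(K;\mS)$, so the sum does; the moments of $\btau_b\nu_F$ vanish because $\btau_b\nu_F = 0$ identically \eqref{equ:conforming-bubble}, the moments of $\tilde\btau_f\nu_F$ are continuous by the defining property of $\tilde{\Sigma}_{k+1,h,f}$ in \eqref{equ:face-bubble-spaces}, and the moments of $\tilde\btau_b\nu_F$ are continuous because they vanish from each side by the claim above. Adding the three gives moment continuity for $\btau\nu_F$, so $\btau \in X$.

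For the reverse inclusion $X \subseteq \Sigma_{k+1,h}^{(1)}$, I would take $\btau \in X$ and apply the local decomposition \eqref{equ:local-decomposition} on each $K$ to write $\btau|_K = \btau_b|_K + \tilde\btau_f|_K + \tilde\btau_b|_K$ uniquely; gluing these pieces over $\cT_h$ defines global fields $\btau_b$, $\tilde\btau_f$, $\tilde\btau_b$. By construction $\btau_b \in \Sigma_{k+1,h,b}$ and $\tilde\btau_b \in \tilde{\Sigma}_{k+1,h,b}$, since those global spaces are defined purely element by element in \eqref{equ:conforming-bubble} and \eqref{equ:div-bubble-spaces}. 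It remains to verify $\tilde\btau_f \in \tilde{\Sigma}_{k+1,h,f}$, i.e. that the degree-$\le k$ moments of $\tilde\btau_f\nu_F$ are continuous across interior faces. Writing $\tilde\btau_f = \btau - \btau_b - \tilde\btau_b$, the moments of $\btau\nu_F$ are continuous since $\btau \in X$, those of $\btau_b\nu_F$ vanish, and those of $\tilde\btau_b\nu_F$ vanish from each side by the claim; hence the moments of $\tilde\btau_f\nu_F$ are continuous, giving $\tilde\btau_f \in \tilde{\Sigma}_{k+1,h,f}$ and therefore $\btau \in \Sigma_{k+1,h}^{(1)}$. This closes both inclusions and proves the lemma.
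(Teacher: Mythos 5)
Your proof is correct, but it takes a genuinely different route from the paper's. The paper checks the easy inclusion $\Sigma_{k+1,h}^{(1)}\subset\Sigma_{k+1,h}^{(1')}$ and then concludes by a dimension count: it sums the dimensions of the three direct summands, computes the dimension of the constrained space as (local dimension)$\times\mT$ minus (number of moment-continuity constraints)$\times\mF^i$, and matches the two via the combinatorial identity $\mF+\mF^i=(n+1)\mT$. You instead prove both inclusions directly, and the load-bearing observation you isolate --- that every $\btau\in\tilde{\Sigma}_{k+1,h,b}(K)$ has vanishing moments of $\btau\nu_{F_s}$ up to degree $k$ on every face, which follows from \eqref{equ:basis-normal} for $s\notin\{i,j\}$, from $q\perp\cP_k(F_j;\R)$ on $F_j$, and from the affine change of variables $A^{i,j}$ (equivalently, from \eqref{equ:symmetric-ij}) on $F_i$ --- is exactly right and is not stated explicitly anywhere in the paper, though it is the reason these functions deserve the name ``nonconforming div-bubbles.'' Your approach buys two things: it explains structurally \emph{why} the identity holds rather than verifying it numerically, and it sidesteps the implicit assumption in the paper's count that the interface constraints defining $\Sigma_{k+1,h}^{(1')}$ are linearly independent (the constraint count a priori only bounds $\dim\Sigma_{k+1,h}^{(1')}$ from below). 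What the paper's argument buys is brevity and a reusable bookkeeping of global dimensions, which it then exploits in the comparison tables of Section 3.4. Your reverse inclusion is also handled cleanly: applying Theorem~\ref{thm:local-decomposition} elementwise and transferring the moment continuity from $\btau$ to $\tilde{\btau}_f$ by subtraction is exactly what is needed, since $\Sigma_{k+1,h,b}$ and $\tilde{\Sigma}_{k+1,h,b}$ impose no interelement coupling.
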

\begin{proof}
Denote the right hand side as $\Sigma_{k+1,h}^{(1')}$.  It is easy to
check that $\Sigma_{k+1,h}^{(1)} \subset \Sigma_{k+1,h}^{(1')}$. From
the direct sum of $\tilde{\Sigma}_{k+1,h,f}$, $\Sigma_{k+1,h,b}$ and
$\tilde{\Sigma}_{k+1,h,b}$, we know  
\[
\begin{aligned}
\dim(\Sigma_{k+1,h}^{(1)}) &= \dim(\tilde{\Sigma}_{k+1,h,f}) +
\dim(\Sigma_{k+1,h,b}) + \dim(\tilde{\Sigma}_{k+1,h,b}) \\
&= nC_{n-1+k}^k\mF + \frac{(n+1)n}{2}C_{n+k-1}^{k-1}\mT +
\frac{(n+1)n}{2}C_{k+n-1}^{n-2} \mT, 
\end{aligned}
\]
and 
\[
\dim(\Sigma_{k+1,h}^{(1')}) = \frac{(n+1)n}{2} C_{n+k+1}^{k+1} \mT -
nC_{n-1+k}^k \mF^i.
\]
Then we obtain $\dim(\Sigma_{k+1,h}^{(1)}) =
\dim(\Sigma_{k+1,h}^{(1')})$ by the fact that $\mF + \mF^i = (n+1)\mT$
for the n-dimensional simplicial grids.
\end{proof}

\paragraph{\underline{Degrees of Freedom}}
Based on the property of $\tilde{\Sigma}_{k+1,h,f}$,
$\Sigma_{k+1,h,b}$ and $\tilde{\Sigma}_{k+1,h,b}$, the unisolvent set
of d.o.f. for $\Sigma_{k+1,h}^{(1)}$ is the following set of linear
functionals: 

\begin{subequations} \label{equ:DOF}
\begin{align}
&N_F^{t,m}(\btau) = \langle \btau\nu_F, \varphi_{F,t}e_m \rangle_F &
\text{For all faces~} F \text{~of~} K,
\label{equ:DOF-face}\\
&N_{K}^{\btheta}(\btau) = (\btau, \btheta)_K & \forall \btheta
\in \Sigma_{k+1,h,b}(K) \oplus \tilde{\Sigma}_{k+1,h,b}(K). 
\label{equ:DOF-element}
\end{align}
\end{subequations}

\begin{theorem}
Let $K$ be a simplex in $\R^n$. Any $\btau$ in
$\Sigma_{k+1,h}^{(1)}$ is uniquely determined by the d.o.f. given by
\eqref{equ:DOF}. 
\end{theorem}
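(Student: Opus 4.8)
The plan is to establish unisolvence in the standard way: first verify that the number of functionals in \eqref{equ:DOF} equals $\dim\cP_{k+1}(K;\mS)$, and then show that the only $\btau\in\cP_{k+1}(K;\mS)$ annihilated by every functional is $\btau=0$. For the count I would invoke the local decomposition of Theorem \ref{thm:local-decomposition}, so that $\dim\cP_{k+1}(K;\mS)=\dim\Sigma_{k+1,h,b}(K)+\dim\tilde{\Sigma}_{k+1,h,f}(K)+\dim\tilde{\Sigma}_{k+1,h,b}(K)$. The element functionals \eqref{equ:DOF-element} are, by construction, a basis-sized set for $\Sigma_{k+1,h,b}(K)\oplus\tilde{\Sigma}_{k+1,h,b}(K)$, while the face functionals \eqref{equ:DOF-face}, being $nC_{n-1+k}^{k}$ per face over $n+1$ faces, match $\dim\tilde{\Sigma}_{k+1,h,f}(K)$ exactly, as already recorded in the discussion preceding Theorem \ref{thm:face-bubble}. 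Summing gives the required equality of cardinalities, so it remains only to prove injectivity.

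For injectivity, write $\btau=\btau_b+\tilde{\btau}_f+\tilde{\btau}_b$ according to Theorem \ref{thm:local-decomposition} and exploit that the three pieces interact triangularly with the two families of functionals. The first observation is that the face functionals \eqref{equ:DOF-face} see only $\tilde{\btau}_f$: the conforming div-bubble $\btau_b$ satisfies $\btau_b\nu_F=0$ on every face (as noted after \eqref{equ:conforming-bubble}), so $N_F^{t,m}(\btau_b)=0$, and I claim $N_F^{t,m}(\tilde{\btau}_b)=0$ as well. Granting this, the vanishing of all face functionals forces $N_F^{t,m}(\tilde{\btau}_f)=0$, whence $\tilde{\btau}_f=0$ by the unisolvence of the face functionals on $\tilde{\Sigma}_{k+1,h,f}(K)$ established through \eqref{equ:dof-face1} and Theorem \ref{thm:face-bubble}. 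It then remains to test the element functionals \eqref{equ:DOF-element} against $\btheta=\btau_b+\tilde{\btau}_b$ itself, which lies in $\Sigma_{k+1,h,b}(K)\oplus\tilde{\Sigma}_{k+1,h,b}(K)$; this yields $\|\btau_b+\tilde{\btau}_b\|_{0,K}^2=0$, so $\btau=0$.

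The main obstacle, and the only step that is not bookkeeping, is verifying the claim $N_F^{t,m}(\tilde{\btau}_b)=0$, i.e.\ that the normal trace of a nonconforming div-bubble has vanishing moments up to degree $k$ on each face. I would argue term by term on $\tilde{\Sigma}_{k+1,h,b}(K)=\bigoplus_{i<j}\mathcal{E}_j^{\hat{i}}\cP_k^\perp(F_j;\R)t_{ij}t_{ij}^T$. For a summand $(\mathcal{E}_j^{\hat{i}}p)t_{ij}t_{ij}^T$ with $p\in\cP_k^\perp(F_j;\R)$, the identity \eqref{equ:basis-normal} kills the normal trace on every face $F_s$ with $s\neq i,j$, so only $F_i$ and $F_j$ survive. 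On $F_j$, Lemma \ref{lem:spaces} gives $\mathcal{R}_j\mathcal{E}_j^{\hat{i}}p=p$, so $N_{F_j}^{t,m}$ reduces to a constant multiple of $\int_{F_j}p\,\varphi_{F_j,t}$, which vanishes because $p\in\cP_k^\perp(F_j;\R)$ is $L^2$-orthogonal to $\cP_k(F_j;\R)\ni\varphi_{F_j,t}$; on $F_i$ the same conclusion follows after rewriting the summand as $\mathcal{E}_i^{\hat{j}}p'$ with $p'\in\cP_k^\perp(F_i;\R)$ via the symmetry \eqref{equ:symmetric-ij}. This orthogonality between the nonconforming div-bubble traces and the degree-$\le k$ test space is precisely what decouples the face functionals from $\tilde{\btau}_b$ and makes the triangular elimination go through.
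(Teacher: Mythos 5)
Your proof is correct and follows essentially the same route as the paper's: count the degrees of freedom against $\dim\cP_{k+1}(K;\mS)$, decompose $\btau=\btau_b+\tilde{\btau}_f+\tilde{\btau}_b$ via Theorem \ref{thm:local-decomposition}, use the face functionals to kill $\tilde{\btau}_f$ through Theorem \ref{thm:face-bubble}, and then test \eqref{equ:DOF-element} with $\btheta=\btau_b+\tilde{\btau}_b$. The only difference is that you explicitly verify the step the paper treats as immediate, namely that $N_F^{t,m}$ annihilates both $\btau_b$ and $\tilde{\btau}_b$ (the latter via \eqref{equ:basis-normal}, Lemma \ref{lem:spaces}, and \eqref{equ:symmetric-ij}); that verification is accurate and is exactly what the paper's ``we immediately obtain $\tilde{\btau}_f=\bzero$'' relies on.
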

\begin{proof}
The local dimension of d.o.f. and $\dim (\cP_{k+1}(K,\mS))$ are both
$\frac{(n+1)n}{2}C_{n+k+1}^{k+1}$. Thus, we only need to show that if
all the d.o.f. applied to $\btau \in \cP_{k+1}(K, \mS)$ vanish, then
$\btau$ vanishes. Let $\btau = \btau_b + \tilde{\btau}_f +
\tilde{\btau}_b \in
\Sigma_{k+1,h,b}(K) \oplus
\tilde{\Sigma}_{k+1,h,f}(K) \oplus 
\tilde{\Sigma}_{k+1,h,b}(K)$, then we immediately obtain
$\tilde{\btau}_f = \bzero$ from Theorem \ref{thm:face-bubble}. Take
$\btheta = \btau_b + \tilde{\btau}_b$ in \eqref{equ:DOF-element} to
find that $\btau = \bzero$.  
\end{proof}

\subsection{Approximation property option II: $P_{k+1}(\mS)$ Lagrangian
  Element}
For the purpose of approximation property, the second class of
additional spaces is the standard $P_{k+1}(\mS)$ Lagrangian finite
element space $\Sigma_{k+1,h}^c$ defined in \eqref{equ:lagrangian}.

\paragraph{\underline{Minimal Nonconforming Spaces}}
In most cases, the direct sum property between $\Sigma_{k+1,h}^c$ and
$\tilde{\Sigma}_{k+1,h,f} \oplus \Sigma_{k+1,h,b}$ does not hold. Here
we first modify the face-bubble function spaces
\eqref{equ:face-bubble-spaces} on the boundary as 
\[
\tilde{\Sigma}_{k+1,h,f,0} := \{\btau \in \tilde{\Sigma}_{k+1,h,f} ~|~
\btau \nu = 0,\text{~on~} \cF_h^\partial \}.
\]
Namely, the face-bubble functions related to the boundary are removed.
Then, the second class of finite element spaces $\Sigma_{k+1,h}^{(2)}$
for stress tensors is 
\begin{equation} \label{equ:tensor-space2}
\Sigma_{k+1,h}^{(2)} = \tilde{\Sigma}_{k+1,h,f,0} + (\Sigma_{k+1,h,b}
+ \Sigma_{k+1,h}^c).
\end{equation}

We will prove the direct sum property in lowest order case ($k=0$) for
the {\it strongly regular} grids which are defined as 
\begin{equation} \label{equ:regular-mesh}
\overrightarrow{a_1 a_i} \nparallel \overrightarrow{a_1'a_i} \qquad
\forall F = K\cap K', K=[a_1, a_2, \cdots, a_{n+1}], K'=[a_1', a_2,
\cdots, a_{n+1}].
\end{equation}

\begin{lemma} \label{lem:direct-sum}
For the lowest order case ($k=0$), the following holds for the
strongly regular grids:
\begin{equation} \label{equ:direct-sum}
\tilde{\Sigma}_{1,h,f,0} \cap \Sigma_{1,h}^c = \{\bzero\}.
\end{equation}
\end{lemma}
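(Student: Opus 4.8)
The plan is to exploit that any $\btau$ lying in the intersection is a globally continuous, piecewise linear symmetric tensor field, hence is completely determined by its values at the vertices of $\cT_h$; it therefore suffices to show $\btau(a_p)=\bzero$ for every vertex $a_p$. So I would fix a vertex, write $M:=\btau(a_p)\in\mS$, and aim to prove $M=\bzero$. For any $K\in\cT_h$ containing $a_p$, the membership $\btau|_K\in\tilde\Sigma_{1,h,f}(K)$ forces, at $k=0$,
\[
\btau|_K=\sum_{1\le i<j\le n+1}(c_{ij}\lambda_i+d_{ij}\lambda_j)\,t_{ij}t_{ij}^T ,
\]
so that, evaluating at $a_p$ (where $\lambda_m(a_p)=\delta_{mp}$),
\[
M\in S_K:=\mathrm{span}\{t_{pm}t_{pm}^T:\ m\ne p\},
\]
the span of the rank-one tensors built from the $n$ edges of $K$ at $a_p$.

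The crux is the following claim: fixing one simplex $K\ni a_p$ and letting $F_j$ ($j\ne p$) denote its $n$ faces through $a_p$ (the face opposite $a_j$, with unit normal $\nu_{F_j}\parallel\nabla\lambda_j$), one has $\mathrm{range}(M)\subseteq\nu_{F_j}^\perp$ for every such $j$. I would prove this in two cases, uniformly covering interior and boundary vertices. If $F_j$ is a boundary face, then $\btau\in\tilde\Sigma_{1,h,f,0}$ gives $\btau\nu_{F_j}=\bzero$ on $F_j$, hence $M\nu_{F_j}=\bzero$; symmetry of $M$ then yields $\mathrm{range}(M)=\ker(M)^\perp\subseteq\nu_{F_j}^\perp$. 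If $F_j$ is interior, say $F_j=K\cap K'$, continuity of $\btau$ gives $M\in S_K\cap S_{K'}$. The $n$ edges of $K$ at $a_p$ are the $n-1$ edges of $F_j$ at $a_p$ together with the transversal edge $e_K$ pointing to the apex $a_j$, and likewise for $K'$ with transversal edge $e_{K'}$. Here the strongly regular condition \eqref{equ:regular-mesh} enters decisively: it guarantees $e_K\nparallel e_{K'}$, and since within $K'$ all edges at $a_p$ are linearly independent (so $e_{K'}\nparallel f$ for every edge $f$ of $F_j$ at $a_p$), a short computation shows $e_{K'}e_{K'}^T\notin S_K$, whence
\[
S_K\cap S_{K'}=\mathrm{span}\{ff^T:\ f \text{ an edge of } F_j \text{ at } a_p\}.
\]
As these face edges span the tangent hyperplane $\nu_{F_j}^\perp$ of $F_j$, this again forces $\mathrm{range}(M)\subseteq\nu_{F_j}^\perp$.

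Finally I would combine the $n$ inclusions: $\mathrm{range}(M)\subseteq\bigcap_{j\ne p}\nu_{F_j}^\perp=\big(\mathrm{span}\{\nabla\lambda_j:\ j\ne p\}\big)^\perp$. Because any $n$ of the barycentric gradients form a basis of $\R^n$, the vectors $\{\nabla\lambda_j:\ j\ne p\}$ are linearly independent and this intersection is $\{\bzero\}$, so $M=\bzero$; as $a_p$ was arbitrary, $\btau=\bzero$.

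The step I expect to be the main obstacle is the interior-face reduction $S_K\cap S_{K'}=\mathrm{span}\{ff^T\}$: the clean inclusion $\mathrm{range}(M)\subseteq\nu_{F_j}^\perp$ hides the verification that $e_{K'}e_{K'}^T\notin S_K$, which relies on the elementary but essential fact that $vv^T$ lies in the span of the squares $\{g_ig_i^T\}$ of a basis $\{g_i\}$ only when $v$ is parallel to some $g_i$. It is precisely here — and only here — that the strongly regular hypothesis is used: without it $e_K\parallel e_{K'}$ becomes possible, the intersection can be strictly larger than $\mathrm{span}\{ff^T\}$, and the whole reduction collapses, which is exactly consistent with the failure of \eqref{equ:direct-sum} on general grids.
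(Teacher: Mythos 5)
Your proof is correct, but it takes a genuinely different route from the paper's. The paper argues face by face: it expands $\btau=\sum_{F\in\cF_h^i}\btheta_F$ in the face-bubble basis, with each $\btheta_F$ supported on the two elements sharing $F$, observes that the normal traces on $F$ of all the other bubbles vanish so that continuity forces $[\btheta_F]|_F=0$, and then kills the coefficients of $\btheta_F$ using the linear independence of $\{\lambda_i|_F\}$ together with $t_{1i}^T\nu_F\neq 0$ and $t_{1i}\nparallel t_{1'i}$; the interior faces thus decouple completely and boundary vertices need no special treatment. You instead argue vertex by vertex on the nodal value $M=\btau(a_p)$, confining it to the rank-one spans $S_K$ and forcing $\mathrm{range}(M)\subseteq\nu_{F_j}^\perp$ for all $n$ faces of a fixed $K$ through $a_p$. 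Both proofs use strong regularity at exactly the same geometric point (non-parallel transversal edges across an interior face), but your route leans on the stronger algebraic fact that $vv^T\in\mathrm{span}\{g_ig_i^T\}$ for a basis $\{g_i\}$ only if $v$ is parallel to some $g_i$ --- which is true (test with the dual basis: $v_kv_l=c_k\delta_{kl}$ forces all but one coordinate of $v$ to vanish) and which you correctly identify as the crux of the reduction $S_K\cap S_{K'}=\mathrm{span}\{ff^T\}$ --- whereas the paper only needs that $\alpha t_{1i}=\beta t_{1'i}$ with non-parallel unit vectors implies $\alpha=\beta=0$. The paper's argument is shorter; yours is more geometric, makes explicit where the boundary condition in $\tilde{\Sigma}_{1,h,f,0}$ and the strong regularity each enter, and would be the natural template if one wanted to characterize the intersection on grids that fail \eqref{equ:regular-mesh}.
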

\begin{proof}
Let $\btau \in \tilde{\Sigma}_{1,h,f,0} \cap \Sigma_{1,h}^c$, then 
\[
\btau = \sum_{F\in \cF_h^i} \sum_{l=1}^{n} \beta_{F}^{1,l}
\bphi_{F}^{1,l}.
\]
For any $F = K\cap K' \in \cF_h^i$, $K = [a_1, a_2, \cdots, a_{n+1}]$
and $K' = [a_1', a_2, \cdots, a_{n+1}]$, let $\btheta_F = \sum_{l=1}^n
\beta_{F}^{1,l} \bphi_{F}^{1,l}$, then $\supp(\btheta_F) = K\cup K'$. Note
that $\btheta_F|_K \in \sum_{i=2}^{n+1} \lambda_i t_{1i}t_{1i}^T \R$, then 
\[
\btheta_F|_K = \sum_{i=2}^{n+1} \gamma_{K,i} \lambda_i
t_{1,i}t_{1,i}^T \qquad 
\btheta_F|_{K'} = \sum_{i=2}^{n+1} \gamma_{K',i} \lambda_i t_{1',i}
t_{1',i}^T.
\]
It is easy to see that $\btau \in H^1(\Omega; \mS)$ implies
$[\btheta_F ]|_F = 0$, which yields 
\[
\sum_{i=2}^{n+1} \lambda_i \left\{ 
\gamma_{K,i}(t_{1,i}^T\nu_F) t_{1,i} - \gamma_{K',i}(t_{1',i}^T\nu_F)
t_{1'i} \right\}|_F = 0.
\]
Notice that $\lambda_i, i=2,\cdots, n+1$ are linear independent basis
functions on $F$, $t_{1,i}^T\nu_F \neq 0$ and $t_{1,i} \nparallel
t_{1',i}$ due to the strongly regular assumption, we immediately have
$\gamma_{K,i} = \gamma_{K',i} = 0$. Thus, $\btheta_F = \bzero$ so that
$\btau = \bzero$.
\end{proof}

For the lowest order case on strongly regular grids, the basis
functions of $\Sigma_{1,h}^{(2)}$ can be obtained by the union of
basis functions of $\tilde{\Sigma}_{1,h,f,0}$ \eqref{equ:face-bubble0}
and the standard basis functions of $P_1(\mS)$ Lagrangian element. For
high order elements on general grids, the basis functions and d.o.f.
of $\Sigma_{k+1,h,b} + \Sigma_{k+1,h}^c$ were reported in
\cite{hu2015finite, hu2014family, hu2015family}. At this point, the
union of two sets of basis functions may not be independent, in which
case the iterative methods still work, see
\cite{fortin2000augmented,lee2006convergence}.  

From the analysis in Section \ref{sec:IP-mixed}, any spaces
$\Sigma_{k+1,h}$ that satisfies $\Sigma_{k+1,h}^{(2)} \subset
\Sigma_{k+1,h} \subset \Sigma_{k+1,h}^{(1)}$ can be proved to be
convergent in our framework. Thus, the two classes of finite elements
are the minimal and maximal in this sense, respectively.  Especially
for the lowest order case, the element proposed in \cite{cai2005mixed}
lies in this framework. Below we will give the comparison of the
global dimension of d.o.f. between different spaces in lowest order
case.  

The d.o.f. for $\Sigma_{1,h}^{(1)}$ given in \eqref{equ:DOF} show that the
global dimensions of $\Sigma_{1,h}^{(1)}$ are $3\mT+2\mF$ in 2D and
$12\mT+3\mF$ in 3D. In comparison, the global dimensions of
$\Sigma_{1,h}^{(2)}$ are at most $2\mF^i+3\mV$ in 2D and $3\mF^i+6\mV$
in 3D. We would like to mention that in Cai and Ye's construction
\cite{cai2005mixed}, the global dimensions are  $3\mF$ and $6\mF$ in
2D and 3D, respectively.  The relationship between $\mV, \mF$ and
$\mT$ is $\mV : \mF: \mT \approx 1 : 3: 2$ in 2D case, thus the
proportion of the global dimension of $\Sigma_{1,h}^{(1)}$,
$\Sigma_{1,h}^{(2)}$ and the space in \cite{cai2005mixed} is
approximately $12:9:9$ in 2D case.  In 3D case, however, we have $\mV
: \mF: \mT \approx 1 : 12 : 6$ for the uniform grid. Then the
proportion of the global dimension of $\Sigma_{1,h}^{(1)}$,
$\Sigma_{1,h}^{(2)}$ and Cai and Ye's element is approximately
$108:42:72$ in 3D case.

\section{Consistency: Interior Penalty}
\label{sec:IP-mixed}

In this section, we will give the interior penalty mixed
finite element method for the linear elasticity. Without
specification, we will use $\Sigma_{k+1,h}$ to represent the
$\Sigma_{k+1,h}^{(1)}$ defined in \eqref{equ:tensor-space1} or
$\Sigma_{k+1,h}^{(2)}$ defined in \eqref{equ:tensor-space2}, since
both of them are suitable in both the formulation and numerical
analysis.  

\subsection{Interior Penalty Mixed formulation} \label{subsec:IP-mixed}
Our interior penalty mixed method is to find $(\bsigma_h, u_h)\in
\Sigma_{k+1,h} \times V_{k,h}$, such that 
\begin{equation}\label{equ:IP-mixed}
\left\{
\begin{aligned}
&a_h(\bsigma_h,\btau_h) + b_h(\btau_h, u_h) &=&~ 0 \qquad  &\forall
\btau_h \in \Sigma_{k+1,h},\\
&b_h(\bsigma_h, v_h) &=&~ (f, v_h)_{\Omega} \qquad &\forall v_h \in
V_{k,h},
\end{aligned} 
\right.
\end{equation}
where the bilinear forms are defined as 
\begin{subequations} 
\begin{align}
a_h(\bsigma, \btau) &= (\cA \bsigma, \btau)_{\Omega} + \eta \sum_{F\in
  \cF_h^i} h_F^{-1} \int_F [\bsigma] \cdot [\btau] &\forall
  \bsigma, \btau \in \Sigma_{k+1,h} \cup \Sigma,
\label{equ:bilinear-a}\\
b_h(\bsigma, v) &= \sum_{K\in \cT_h} (\div \bsigma, v)_{K} &\forall
\bsigma \in \Sigma_{k+1,h} \cup \Sigma, v \in V_{k,h} \cup V.
\label{equ:bilinear-b} 
\end{align}
\end{subequations}
Here $\eta = \cO(1)$ is a given positive constant.  We then define the
following star norm for $\Sigma_{k+1,h} \cup \Sigma$ as 
\begin{equation} \label{equ:star-norm}
\|\btau \|_{*,h}^2 := \sum_{K\in \cT_h} \left(\|\btau \|_{0,K}^2 +
\|\div \btau\|_{0,K}^2 \right) + \eta \sum_{F\in \cF_h^i}
h_F^{-1}\|[\btau]\|_F^2 \qquad \forall \btau \in \Sigma_{k+1,h} \cup
\Sigma.
\end{equation}

\subsection{Stability Analysis}
According to the theory of mixed method, the stability of the saddle
point problem is the corollary of the following two conditions
\cite{brezzi1974existence,brezzi1991mixed}:

\begin{enumerate}
\item K-ellipticity: There exits a constant $C>0$, independent of
the grid size such that 
\begin{equation} \label{equ:K-ellipticity}
a_h(\btau_h, \btau_h) \ge C\|\btau_h\|^2_{*,h}  \qquad \forall
\btau_h \in Z_h,
\end{equation}
where $Z_h = \{\btau_h \in \Sigma_{k+1,h} ~|~ b_h(\btau_h, v_h) = 0,
~\forall v_h \in V_{k,h} \}$. 

\item The discrete inf-sup condition: There exits a constant
$C>0$, independent of the grid size such that 
\begin{equation} \label{equ:inf-sup}
\inf_{v_h \in V_{k,h}} \sup_{\btau_h \in \Sigma_{k+1,h}} \frac{b_h(
\btau_h, v_h)} { \|\btau_h\|_{*,h} \|v_h\|_{0}} \ge C.
\end{equation}
\end{enumerate}

Since $\div_h \Sigma_{k+1,h}\subset V_{k,h}$, we know $\div_h \btau_h
=0$ for any $\btau_h\in Z_h$. This implies the K-ellipticity
\eqref{equ:K-ellipticity}. It remains to show the discrete inf-sup
condition \eqref{equ:inf-sup} in the following lemma.

\begin{lemma} \label{lem:inf-sup1}
For any $\btau \in H^1(\Omega;\mS)$, there exists $\btau_h \in
\Sigma_{k+1,h}$ such that
\begin{equation} \label{equ:inf-sup1}
\int_F (\btau - \btau_h)\nu_F \cdot p = 0, \quad \forall p \in
\cP_k(F;\R^n) \quad \text{and} \quad \|\btau_h\|_{*,h} \lesssim
\|\btau\|_{1}.
\end{equation}
\end{lemma}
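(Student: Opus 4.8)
The plan is to build $\btau_h$ in two steps, separating the approximation from the normal-moment matching. A direct interpolation of $\btau$ into the face-bubble space alone would fail: by Lemma~\ref{lem:scaling} the face-bubbles satisfy $\|\bphi_F^{t,m}\|_{\div,K}\lesssim h_K^{-n/2}$, whereas the face functionals \eqref{equ:DOF-face} applied to $\btau$ only control $\|\btau\nu_F\|_{0,F}$, and the resulting powers of $h_K$ do not cancel. The remedy is to first subtract a good conforming approximation and then let the face-bubbles act only on the (small) remainder, whose face norm decays like $h_K^{1/2}$ and exactly compensates the $h_K^{-n/2}$ blow-up.

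First I would fix a Scott--Zhang-type quasi-interpolation $\Pi^c:H^1(\Omega;\mS)\to\Sigma_{k+1,h}^c$ that is $H^1$-stable and has the local estimates $\|\btau-\Pi^c\btau\|_{0,K}\lesssim h_K\|\btau\|_{1,\omega_K}$ and $|\btau-\Pi^c\btau|_{1,K}\lesssim\|\btau\|_{1,\omega_K}$, where $\omega_K$ is the usual element patch; on the boundary faces $F\in\cF_h^\partial$ the functionals defining $\Pi^c$ are taken to be the normal moments $\mu\mapsto\int_F\mu\nu_F\cdot p$, $p\in\cP_k(F;\R^n)$, so that $\Pi^c$ reproduces the boundary normal moments of $\btau$. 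Set $\btau^c:=\Pi^c\btau\in\Sigma_{k+1,h}^c$; since $\btau^c\in H^1$ it has no interelement jump and $\|\btau^c\|_{\div,h}\lesssim\|\btau^c\|_1\lesssim\|\btau\|_1$. Next I would correct the interior normal moments using the face-bubbles of Theorem~\ref{thm:face-bubble}: define
\[
\btau_f:=\sum_{F\in\cF_h^i}\sum_{t,m}N_F^{t,m}(\btau-\btau^c)\,\bphi_F^{t,m}\in\tilde{\Sigma}_{k+1,h,f,0},
\]
and put $\btau_h:=\btau^c+\btau_f$, which lies in $\Sigma_{k+1,h}^{(2)}\subset\Sigma_{k+1,h}$.

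The matching condition is then immediate. Because $\{\bphi_F^{t,m}\}$ is dual to the functionals $N_F^{t,m}$ and the normal trace of each face-bubble is supported on its own face (as $\lambda_p|_{F_p}=0$), one has $N_F^{t,m}(\btau_f)=N_F^{t,m}(\btau-\btau^c)$, hence $N_F^{t,m}(\btau_h)=N_F^{t,m}(\btau)$ on every interior face, while $\Pi^c$ reproduces the moments on boundary faces; since these functionals span the moments against $\cP_k(F;\R^n)$, this yields $\int_F(\btau-\btau_h)\nu_F\cdot p=0$ for all $p\in\cP_k(F;\R^n)$ on every $F$. For the norm bound the decisive point is that the functionals now act on the error: by the normalization of $\varphi_{F,t}$, $|N_F^{t,m}(\btau-\btau^c)|\lesssim h_F^{(n-1)/2}\|(\btau-\btau^c)\nu_F\|_{0,F}$, and a trace inequality combined with the approximation estimates gives $\|(\btau-\btau^c)\nu_F\|_{0,F}\lesssim h_K^{1/2}\|\btau\|_{1,\omega_K}$, so that $|N_F^{t,m}(\btau-\btau^c)|\lesssim h_K^{n/2}\|\btau\|_{1,\omega_K}$. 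Feeding this into the three scaling bounds of Lemma~\ref{lem:scaling} yields, element by element, $\|\btau_f\|_{\div,K}\lesssim\|\btau\|_{1,\omega_K}$ and, per interior face, $h_F^{-1}\|[\btau_f]\|_{0,F}^2\lesssim\|\btau\|_{1,\omega_K}^2$ (using $[\btau_h]=[\btau_f]$ since $\btau^c$ is continuous). Summing over elements and faces with the finite-overlap property of the patches $\omega_K$, and adding the contribution of $\btau^c$, produces $\|\btau_h\|_{*,h}\lesssim\|\btau\|_1$.

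The hard part is precisely the mismatch of scales identified above: the face-bubbles are too large in the $\div$- and jump-seminorms to absorb the full trace of $\btau$, so the whole argument rests on having them carry only the conforming approximation error. A secondary technical obstacle is the treatment of boundary faces for the minimal space $\Sigma_{k+1,h}^{(2)}$, where the face-bubbles have been removed; I expect this to be handled by the choice of $\Pi^c$ above, which preserves the boundary normal moments, this being possible since the Lagrangian space $\Sigma_{k+1,h}^c$ carries enough degrees of freedom on each boundary face.
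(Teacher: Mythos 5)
Your proposal is correct and follows essentially the same route as the paper: a Scott--Zhang quasi-interpolant into the continuous piecewise $\cP_{k+1}(\mS)$ space, followed by a face-bubble correction carrying only the normal moments of the interpolation error on interior faces, with exactly the same scaling bookkeeping ($|N_F^{t,m}(\btau-\btau^c)|\lesssim h_K^{n/2}\|\btau\|_{1,\omega_K}$ cancelling the $h_K^{-n/2}$ growth of the bubbles). The only cosmetic difference is the treatment of boundary faces, where you modify the boundary functionals of the quasi-interpolant to reproduce normal moments while the paper simply invokes the boundary-preserving property of the Scott--Zhang operator.
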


\begin{proof}
Since $\Sigma_{k+1,h}$ contains the piecewise $\cP_{k+1}$ continuous
functions, we can define a Scott-Zhang \cite{scott1990finite}
interpolation operator $I_h: H^1(\Omega;\mS) \mapsto \{\btau \in
H^1(\Omega;\mS)~|~ \btau|_K \in \cP_{k+1}(K;\mS)\}$ such that  
\[
h_K^{-1}\|\btau - I_h \btau\|_{0,K} + \|\nabla I_h \btau\|_{0,K}
\lesssim \|\nabla \btau\|_{0,K} \qquad \forall K \in \cT_h. 
\]
Define $\btau_h\in \Sigma_{k+1,h}$ as 
\begin{equation} \label{equ:div-projection}
\btau_h = I_h \btau+ \sum_{F\in \cF_h^i}
\sum_{l=1}^n\sum_{s=1}^{C_{n-1+k}^k} \left( \int_F
(\btau- I_h\btau) \nu_F \varphi_{F,s} \cdot e_l \right) \bphi_{F}^{s,l},
\end{equation}
where the face bubble function $\bphi_{F}^{s,l}$ satisfies
$\supp(\bphi_F^{s,l}) = \cT_{h,F}$, and for each $K \in \cT_{h,F}$ is
defined as \eqref{equ:face-bubble-expression}. From the definition of
$\btau_h$, we obtain 
\[
\begin{aligned}
\int_{F'} (\btau_h - I_h\btau) \nu_{F'}\varphi_{F',t} &= 
\sum_{F\in \cF_h^i} \sum_{l=1}^n \sum_{s=1}^{C_{n-1+k}^k} \left(
\int_F (\btau - I_h\btau) \nu_F \varphi_{F,s} \cdot e_l \right) 
\int_{F'} \bphi_{F}^{s,l} \nu_{F'} \varphi_{F',t} \\
&= \sum_{F\in \cF_h^i} \sum_{l=1}^n \sum_{s=1}^{C_{n-1+k}^k} 
\left( \int_F (\btau - I_h\btau) \nu_F\varphi_{F,s} \cdot e_l \right)
e_l \delta_{FF'} \delta{st} \\ 
&= \int_{F'} (\btau - I_h\btau)\nu_{F'}\varphi_{F',t} \quad \forall F'
\in \cF_h^i.
\end{aligned}
\]
and 
\[
\int_{F'} (\btau_h - I_h\btau) \nu_{F'} \cdot p = \int_{F'} (\btau - I_h\btau)
\nu_{F'} \cdot p \qquad \forall F'\in \cF_h^\partial, p \in
\cP_k(F';\R^n),
\]
since Scott-Zhang interpolation operator preserves the boundary
condition. Thus we have 
\[
\int_F (\btau - \btau_h)\nu_F \cdot p = 0 \qquad \forall p \in
\cP_k(F;\R^n).
\]

With the help of Lemma \ref{lem:scaling} and local trace inequality,
\[
\begin{aligned}
\|\btau_h - I_h\btau\|_{\div,h}^2 
& \lesssim \sum_{F\in \cF_h^i} \sum_{l=1}^n \sum_{s=1}^{C_{n-1+k}^k} 
\left| \int_F (\btau - I_h \btau)\nu_F\varphi_{F,s} \cdot e_l\right|^2
\|\bphi_{F}^{s,l}\|_{\div,h,\Omega}^2 \\
& \lesssim \sum_{F\in \cF_h^i} \sum_{l=1}^n \sum_{s=1}^{C_{n-1+k}^k} 
\|(\btau - I_h\btau)\nu_F\|_{0,F}^2 \|\varphi_{F,s}\|_{0,F}^2
\sum_{K'\in \cT_{h,F}} \|\bphi_{F}^{s,l} \|_{\div,K'}^2 \\
& \lesssim \sum_{K\in \cT_h} \sum_{l=1}^n  (h_K^{-1}\|\btau -
  I_h\btau\|^2_{0,K} +h_K |\btau - I_h\btau|_{1,K}^2) h_K^{n-1}
h_K^{-n}\\
& \lesssim \sum_{K\in \cT_h} \sum_{l=1}^n h_K^{-2} \|\btau -
  I_h\btau\|^2_{0,K} + |\btau - I_h\btau|_{1,K}^2 \lesssim
  |\btau|_1^2.
\end{aligned}
\]
And, 
\[
\begin{aligned}
\sum_{F\in \cF_h^i} h_F^{-1}\|[\btau_h]\|_{0,F}^2 
& \lesssim \sum_{F\in \cF_h^i} \sum_{l=1}^n \sum_{s=1}^{C_{n-1+k}^k}
h_F^{-1} \left| \int_F (\btau - I_h\btau)\nu_F \varphi_{F,s} \cdot e_l
\right|^2 \|[\bphi_{F}^{s,l}]\|_{0,F}^2 \\
& \lesssim \sum_{F\in \cF_h^i} \sum_{l=1}^n \sum_{s=1}^{C_{n-1+k}^k} 
h_F^{-1} \|(\btau - I_h\btau)\nu_F\|_{0,F}^2
\|\varphi_{F,s}\|_{0,F}^2 h_F^{-n+1} \\
& \lesssim \sum_{K\in \cT_h} \sum_{l=1}^n h_K^{-1} \left(
    h_K^{-1}\|\btau - I_h\btau\|^2_{0,K} + h_K |\btau -
    I_h\btau|_{1,K}^2 \right) \\
& \lesssim \sum_{K\in \cT_h} \sum_{l=1}^n h_K^{-2} \|\btau -
  I_h\btau\|^2_{0,K} + |\btau - I_h\btau|_{1,K}^2 \lesssim
  |\btau|_1^2.
\end{aligned}
\]
Then we have 
\[
\|\btau_h\|_{*,h} \leq \|\btau_h - I_h\btau\|_{*, h} +
\|I_h\btau\|_{\div,h} \lesssim \|\btau\|_1. 
\]
\end{proof}

Essentially, we define an operator $\Pi_{h,f}^{\div,*}: H^1(\Omega;
\mS) \mapsto \Sigma_{k+1,h}$ in the construction
\eqref{equ:div-projection} as 
$$
\Pi_{h,f}^{\div,*} \btau := I_h \btau + \sum_{F\in \cF_h^i}
\sum_{l=1}^n \sum_{s=1}^{C_{n-1+k}^k} \left( \int_F (\btau - I_h\btau)
\nu_F\varphi_{F,s} \cdot e_l \right) \bphi_{F}^{s,l}.
$$
Then we know that $\div_h \Range(I - \Pi_{h,f}^{\div,*}) \subset
R_k^{\perp}$. Let $\div_h^{-1}(R_k^{\perp}) := \{\btau \in
\Sigma_{k+1,h,b}~|~ \div_h \btau \in R_k^{\perp}\}$, then Lemma
\ref{lem:v-perp} implies a stable linear operator
$\Pi_{h,b}^{\div,*}: \div_h^{-1}(R_k^{\perp}) \mapsto
\Sigma_{k+1,h,b}$. Define $\Pi_{h}^{\div,*} := \Pi_{h,f}^{\div,*} +
\Pi_{h,b}^{\div,*} (I - \Pi_{h,f}^{\div,*})$, we immediately have the
following commutative diagram: 
\begin{equation} \label{equ:diagram} 
\begin{CD}
H^1(\Omega;\mS) @> {\div} >> L^2(\Omega;\R^n) \\
@VV \Pi_h^{\div,*} V  @VV \Pi_h^0 V \\
\Sigma_{k+1,h} @> {\div_h} >> V_{k,h}
\end{CD}
\end{equation}
where $\Pi_h^0$ is the $L^2$ projection operator on $V_{k,h}$.  In
summary, we have the following theorem.

\begin{theorem} \label{thm:stability}
For any $f \in L^2(\Omega,\R^n)$, the discrete variational problem
\eqref{equ:IP-mixed} is well-posed for $(\Sigma_{k+1,h},
\|\cdot\|_{*,h})$ and $(V_{k,h}, \|\cdot\|_0)$. 
\end{theorem}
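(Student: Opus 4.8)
The plan is to verify the two structural hypotheses of the classical Brezzi theory for saddle-point problems \cite{brezzi1974existence, brezzi1991mixed}---namely the K-ellipticity \eqref{equ:K-ellipticity} on the kernel $Z_h$ and the discrete inf-sup condition \eqref{equ:inf-sup}---together with the routine continuity of the bilinear forms $a_h$ and $b_h$ in the norms $\|\cdot\|_{*,h}$ and $\|\cdot\|_0$; well-posedness of \eqref{equ:IP-mixed} then follows at once. The continuity is immediate from the boundedness of $\cA$, the Cauchy--Schwarz inequality, and the definition \eqref{equ:star-norm} of the star norm, so I would dispatch it in one line and concentrate on the two quantitative conditions.

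First I would establish the K-ellipticity. For $\btau_h \in Z_h$ the defining relation $b_h(\btau_h, v_h) = 0$ together with the inclusion $\div_h \Sigma_{k+1,h} \subset V_{k,h}$ permits the choice $v_h = \div_h \btau_h$, forcing $\div_h \btau_h = 0$. Since $\cA$ is symmetric positive definite and the divergence contribution to the star norm now vanishes,
\[
a_h(\btau_h, \btau_h) = (\cA \btau_h, \btau_h)_\Omega + \eta \sum_{F \in \cF_h^i} h_F^{-1} \|[\btau_h]\|_F^2 \gtrsim \|\btau_h\|_{*,h}^2,
\]
where the penalty terms coincide on both sides and the coercivity of $\cA$ controls $\|\btau_h\|_0^2$. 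This is exactly \eqref{equ:K-ellipticity}.

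The main step is the discrete inf-sup condition, and here I would read it off directly from the commutative diagram \eqref{equ:diagram}. Given $v_h \in V_{k,h}$, the stability of the continuous formulation (see \cite{arnold2002mixed, arnold2008finite}) supplies $\btau \in H^1(\Omega;\mS)$ with $\div \btau = v_h$ and $\|\btau\|_1 \lesssim \|v_h\|_0$. Taking $\btau_h := \Pi_h^{\div,*}\btau$ and using commutativity, $\div_h \btau_h = \Pi_h^0 \div \btau = \Pi_h^0 v_h = v_h$, so that $b_h(\btau_h, v_h) = (\div_h \btau_h, v_h)_\Omega = \|v_h\|_0^2$, while the star-norm boundedness of $\Pi_h^{\div,*}$ gives $\|\btau_h\|_{*,h} \lesssim \|\btau\|_1 \lesssim \|v_h\|_0$. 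Dividing yields
\[
\sup_{\btau_h \in \Sigma_{k+1,h}} \frac{b_h(\btau_h, v_h)}{\|\btau_h\|_{*,h}} \geq \frac{\|v_h\|_0^2}{\|\btau_h\|_{*,h}} \gtrsim \|v_h\|_0,
\]
which is \eqref{equ:inf-sup}.

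Most of the genuine work has already been discharged in the preceding lemmas, so the residual obstacle is purely one of bookkeeping in assembling the operator $\Pi_h^{\div,*} = \Pi_{h,f}^{\div,*} + \Pi_{h,b}^{\div,*}(I - \Pi_{h,f}^{\div,*})$ and certifying its star-norm stability. One must combine the estimate $\|\Pi_{h,f}^{\div,*}\btau\|_{*,h} \lesssim \|\btau\|_1$ from Lemma \ref{lem:inf-sup1} with the bound on the conforming div-bubble correction from Lemma \ref{lem:v-perp}, taking care that the correction $\Pi_{h,b}^{\div,*}(I-\Pi_{h,f}^{\div,*})$ has range in $\Sigma_{k+1,h,b}$---whose elements satisfy $\btau\nu_F = 0$ on every face---so that it contributes nothing to the interior-penalty jump term and only its $\|\cdot\|_{\div,h}$-part needs to be controlled.
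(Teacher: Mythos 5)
Your proposal is correct and follows essentially the same route as the paper: the K-ellipticity argument (choosing $v_h = \div_h\btau_h$ to force $\div_h\btau_h = 0$ on $Z_h$) is verbatim the paper's, and your inf-sup argument via the Fortin operator $\Pi_h^{\div,*}$ and the commutative diagram \eqref{equ:diagram} uses exactly the construction the paper assembles immediately before the theorem from Lemma \ref{lem:inf-sup1} and Lemma \ref{lem:v-perp}. The only cosmetic difference is that the paper's one-line proof instead invokes the equivalent face-moment condition \eqref{equ:face-motivation} of Lemma \ref{lem:inf-sup2} together with $R_k|_F\subset\cP_k(F;\R^n)$, which is the same content packaged abstractly.
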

\begin{proof}
It follows from the definition of $\|\cdot\|_{*,h}$ that it is
stronger than $\|\cdot\|_{\div,h}$. Notice that $R_k|_F \subset
\cP_k(F;\R^n)$, we immediately obtain that \eqref{equ:face-motivation}
in Lemma \ref{lem:inf-sup2} is satisfied, which implies the stability
of the finite elements. 
\end{proof}

\subsection{Error Estimate}

Let $(\bsigma, u) \in \Sigma \times V$ be the exact solution of
\eqref{equ:elasticity}, then  
\begin{equation} \label{equ:error-equ}
\left\{
\begin{aligned}
a_h(\bsigma - \bsigma_h, \btau_h) + b_h(\btau_h, u-u_h) &= \langle
[\btau_h], u\rangle_{\cF_h^i} \qquad & \forall \btau_h \in
\Sigma_{k+1,h}, \\
b_h(\bsigma - \bsigma_h, v_h) &= 0 \qquad & \forall v_h \in V_{k,h},
\end{aligned}
\right.
\end{equation}
where $\langle [\btau_h], u\rangle_{\cF_h^i} = \sum_{F\in \cF_h^i}
\int_F [\btau_h] \cdot u$ is the {\em consistency error}.  From the
well-posedness of the discrete variational problem
\eqref{equ:IP-mixed} and the error estimate by Babu\v{s}ka
\cite{babuvska1971error}, we have the following theorem. 

\begin{theorem} \label{thm:estimate1}
For any $f\in L^2(\Omega, \R^n)$, let $(\bsigma, u) \in \Sigma \times
V$ be the exact solution of problem \eqref{equ:elasticity} and
$(\bsigma_h, u_h) \in \Sigma_{k+1,h} \times V_{k,h}$ be the finite
element solution of \eqref{equ:IP-mixed}. Then 
\begin{equation} \label{equ:estimate1}
\|\bsigma - \bsigma_h\|_{*,h} + \|u - u_h\|_0 \lesssim
\inf_{\substack{\btau_h \in \Sigma_{k+1,h}\\ v_h\in V_{k,h}}} 
(\|\bsigma - \btau_h\|_{*,h} + \|u - v_h\|_0) 
+ \sup_{\btau_h \in \Sigma_{k+1,h}} \frac{|\langle [\btau_h], u
\rangle_{\cF_h^i}|}{\|\btau_h\|_{*,h}}. 
\end{equation}
\end{theorem}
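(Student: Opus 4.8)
The plan is to recast the discrete system \eqref{equ:IP-mixed} as a single variational problem on the product space $\Sigma_{k+1,h} \times V_{k,h}$ and apply the abstract Babu\v{s}ka framework. I would introduce the combined bilinear form
\[
\mathcal{B}((\bsigma, u),(\btau, v)) := a_h(\bsigma, \btau) + b_h(\btau, u) + b_h(\bsigma, v),
\]
together with the product norm $\|\btau\|_{*,h} + \|v\|_0$. By Theorem \ref{thm:stability}, the Brezzi conditions (K-ellipticity \eqref{equ:K-ellipticity} and the discrete inf-sup \eqref{equ:inf-sup}) hold uniformly in $h$; these are equivalent to a single Babu\v{s}ka inf-sup condition for $\mathcal{B}$, namely that for every $(\boldsymbol{\rho}_h, z_h) \in \Sigma_{k+1,h}\times V_{k,h}$,
\[
\|\boldsymbol{\rho}_h\|_{*,h} + \|z_h\|_0 \lesssim \sup_{(\boldsymbol{\chi}_h, w_h)} \frac{\mathcal{B}((\boldsymbol{\rho}_h, z_h),(\boldsymbol{\chi}_h, w_h))}{\|\boldsymbol{\chi}_h\|_{*,h} + \|w_h\|_0}.
\]

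Next I would fix an arbitrary pair $(\btau_h, v_h) \in \Sigma_{k+1,h}\times V_{k,h}$ and apply this inf-sup estimate to the \emph{discrete} error $(\boldsymbol{\rho}_h, z_h) := (\bsigma_h - \btau_h, u_h - v_h)$. The crux is to rewrite the numerator using the error equations \eqref{equ:error-equ}: substituting $a_h(\bsigma_h,\boldsymbol{\chi}_h)+b_h(\boldsymbol{\chi}_h,u_h) = a_h(\bsigma,\boldsymbol{\chi}_h)+b_h(\boldsymbol{\chi}_h,u)-\langle[\boldsymbol{\chi}_h],u\rangle_{\cF_h^i}$ and $b_h(\bsigma_h,w_h)=b_h(\bsigma,w_h)$, a direct computation gives the identity
\[
\mathcal{B}((\boldsymbol{\rho}_h, z_h),(\boldsymbol{\chi}_h, w_h)) = a_h(\bsigma - \btau_h, \boldsymbol{\chi}_h) + b_h(\boldsymbol{\chi}_h, u - v_h) + b_h(\bsigma - \btau_h, w_h) - \langle [\boldsymbol{\chi}_h], u\rangle_{\cF_h^i}.
\]

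Then I would bound each term by continuity of the forms in the relevant norms: $|a_h(\bsigma - \btau_h, \boldsymbol{\chi}_h)| \lesssim \|\bsigma - \btau_h\|_{*,h}\|\boldsymbol{\chi}_h\|_{*,h}$, and $|b_h(\boldsymbol{\chi}_h, u-v_h)| \lesssim \|\boldsymbol{\chi}_h\|_{*,h}\|u-v_h\|_0$ and $|b_h(\bsigma - \btau_h, w_h)| \lesssim \|\bsigma - \btau_h\|_{*,h}\|w_h\|_0$ (since $b_h$ is controlled by the $\div_h$-part of $\|\cdot\|_{*,h}$), while the last term is $\le \|\boldsymbol{\chi}_h\|_{*,h}\,\sup_{\boldsymbol{\chi}_h}|\langle[\boldsymbol{\chi}_h],u\rangle_{\cF_h^i}|/\|\boldsymbol{\chi}_h\|_{*,h}$. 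Dividing by $\|\boldsymbol{\chi}_h\|_{*,h}+\|w_h\|_0$ and taking the supremum yields
\[
\|\bsigma_h - \btau_h\|_{*,h} + \|u_h - v_h\|_0 \lesssim \|\bsigma - \btau_h\|_{*,h} + \|u - v_h\|_0 + \sup_{\boldsymbol{\chi}_h \in \Sigma_{k+1,h}}\frac{|\langle[\boldsymbol{\chi}_h], u\rangle_{\cF_h^i}|}{\|\boldsymbol{\chi}_h\|_{*,h}}.
\]
A triangle inequality $\|\bsigma-\bsigma_h\|_{*,h}\le\|\bsigma-\btau_h\|_{*,h}+\|\bsigma_h-\btau_h\|_{*,h}$ (and likewise for $u$), followed by taking the infimum over $(\btau_h,v_h)$, produces \eqref{equ:estimate1}.

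I expect the only non-routine step to be verifying the continuity of $a_h$ in the star norm: the interior-penalty part $\eta\sum_F h_F^{-1}\int_F[\bsigma]\cdot[\btau]$ must be handled by Cauchy--Schwarz as a product of the $h_F^{-1/2}$-weighted jump seminorms appearing in \eqref{equ:star-norm}, so that the penalty contributes exactly the jump terms of $\|\cdot\|_{*,h}$; the $(\cA\bsigma,\btau)$ part is immediate from boundedness of $\cA$. Everything else is bookkeeping with the definitions of $a_h$, $b_h$ and $\|\cdot\|_{*,h}$, together with correct tracking of the consistency term $\langle[\boldsymbol{\chi}_h],u\rangle_{\cF_h^i}$, which is the sole source of nonconformity and of the last term in \eqref{equ:estimate1}. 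I would also note that the exact stress satisfies $[\bsigma]=0$ on interior faces, so $\|\bsigma\|_{*,h}=\|\bsigma\|_{\div}$ and no spurious inconsistency enters through the best-approximation term.
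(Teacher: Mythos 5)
Your proposal is correct and follows essentially the same route as the paper: both recast the system via a combined bilinear form on $\Sigma_{k+1,h}\times V_{k,h}$ satisfying a Babu\v{s}ka inf-sup condition (via Theorem \ref{thm:stability}), apply it to the discrete error, substitute the error equations \eqref{equ:error-equ} to produce the consistency term, and conclude by continuity and the triangle inequality. The only difference is the immaterial sign convention in the last slot of the combined form ($+b_h(\bsigma,v)$ versus the paper's $-b_h(\bsigma,v)$), which is absorbed by replacing $v$ with $-v$ in the test pair.
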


\begin{proof}
Define the bilinear form 
\[
\tilde{a}_h((\bsigma, u)^T, (\btau, v)^T) \triangleq 
a_h(\bsigma, \btau) + b_h(\btau, u) - b_h(\bsigma, v),
\]
which satisfies the inf-sup condition on $\Sigma_{k+1,h} \times
V_{k,h}$ due to the Theorem \ref{thm:stability}. Therefore, for any
$(\btheta_h, w_h)^T \in \Sigma_{k+1,h} \times V_{k,h}$, 
\[
\begin{aligned}
\|\btheta_h - \bsigma_h\|_{*,h} + \|w_h - u_h\|_0  
&\lesssim \sup_{\substack{\btau_h \in \Sigma_{k+1,h}\\ v_h \in
V_{k,h}}}
\frac{\tilde{a}_h((\btheta_h - \bsigma_h, w_h - u_h)^T, (\btau_h,
v_h)^T)}{\|\btau\|_{*,h} + \|v_h\|_0} \\
&= \sup_{\substack{\btau_h \in \Sigma_{k+1,h}\\ v_h \in V_{k,h}}}
\frac{\tilde{a}_h((\btheta_h - \bsigma, w_h - u)^T, (\btau_h,
v_h)^T) + \langle [\btau_h], u\rangle_{\cF_h^i}}{\|\btau_h\|_{*,h} +
\|v_h\|_0} \\
&\lesssim \|\btheta_h - \bsigma\|_{*,h} + \|w_h - u\|_0 +
\sup_{\btau_h \in \Sigma_{k+1,h}} \frac{|\langle [\btau_h], u
\rangle_{\cF_h^i}|}{\|\btau_h\|_{*,h}}.
\end{aligned}
\]
The desired result \eqref{equ:estimate1} then follows from the
triangle inequality.
\end{proof}

For the consistency error, we have the following lemma.  

\begin{lemma} \label{lem:consistent-error}
Assume that $u \in H^{k+1}(\Omega; \R^n)$, it holds that 
\begin{equation} \label{equ:consistent-error}
\sup_{\btau_h \in \Sigma_{k+1,h}} \frac{|\langle [\btau_h],
u\rangle_{\cF_h^i}|}{\|\btau_h\|_{*,h}} \lesssim h^{k+1} |u|_{k+1}.
\end{equation}
\end{lemma}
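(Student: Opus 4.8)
The plan is to exploit the weak continuity built into $\Sigma_{k+1,h}$. Because the moments of $\btau_h\nu_F$ up to degree $k$ are continuous across each interior face, the two one-sided contributions in the jump cancel against any test polynomial, yielding the orthogonality
\[
\int_F [\btau_h]\cdot p = 0 \qquad \forall\, p\in\cP_k(F;\R^n),\ F\in\cF_h^i.
\]
This structural identity is precisely what makes the consistency error small. Using it, on each interior face I may subtract an arbitrary $\cP_k(F;\R^n)$ function from $u$ without altering the integral; I would take $u_F$ to be the trace from one of the two adjacent elements of the $L^2$ projection $\Pi_h^0 u$ onto $V_{k,h}$, so that
\[
\langle [\btau_h], u\rangle_{\cF_h^i}
= \sum_{F\in\cF_h^i}\int_F [\btau_h]\cdot(u-u_F).
\]

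Next I would apply the Cauchy--Schwarz inequality face by face, inserting the weight $h_F^{-1/2}\cdot h_F^{1/2}$ that matches the star norm, followed by a discrete Cauchy--Schwarz inequality over the faces:
\[
|\langle [\btau_h], u\rangle_{\cF_h^i}|
\leq \Big(\sum_{F\in\cF_h^i} h_F^{-1}\|[\btau_h]\|_{0,F}^2\Big)^{1/2}
\Big(\sum_{F\in\cF_h^i} h_F\|u-u_F\|_{0,F}^2\Big)^{1/2}.
\]
The first factor is bounded by $\|\btau_h\|_{*,h}$ directly from the definition \eqref{equ:star-norm}, so it remains only to estimate the second factor by $h^{k+1}|u|_{k+1}$.

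For this approximation estimate I would pass from each face back to its element by a local trace inequality and then invoke the standard approximation bounds for $\Pi_h^0$. On an element $K$ with face $F$ this gives
\[
\|u-\Pi_h^0 u\|_{0,F}^2
\lesssim h_K^{-1}\|u-\Pi_h^0 u\|_{0,K}^2 + h_K|u-\Pi_h^0 u|_{1,K}^2
\lesssim h_K^{2k+1}|u|_{k+1,K}^2,
\]
whence $h_F\|u-u_F\|_{0,F}^2 \lesssim h_K^{2(k+1)}|u|_{k+1,K}^2$; summing over all interior faces, each associated with a uniformly bounded number of elements by shape regularity, yields $\sum_{F\in\cF_h^i} h_F\|u-u_F\|_{0,F}^2 \lesssim h^{2(k+1)}|u|_{k+1}^2$. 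Dividing by $\|\btau_h\|_{*,h}$ and taking the supremum then gives \eqref{equ:consistent-error}. I expect the one delicate point to be the orthogonality step: since the jump $[\btau_h]$ already carries the normal traces from both $K^+$ and $K^-$, a single $\cP_k(F;\R^n)$ subtraction suffices, but one must track the powers of $h_K$ through the trace inequality carefully to land on the optimal rate $h^{k+1}$ rather than something suboptimal.
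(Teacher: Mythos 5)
Your proposal is correct and follows essentially the same route as the paper: both exploit the orthogonality $\int_F[\btau_h]\cdot p=0$ for $p\in\cP_k(F;\R^n)$ coming from the weak continuity of the normal moments, then apply Cauchy--Schwarz with the $h_F^{\pm 1/2}$ weights matching the star norm and a trace-plus-approximation bound of order $h_F^{k+1/2}$ on each face. The only cosmetic difference is that you instantiate the subtracted polynomial as the trace of $\Pi_h^0 u$ and run the trace inequality explicitly, whereas the paper takes the infimum over $\cP_k(F;\R^n)$ and cites scaling; the two are equivalent.
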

\begin{proof}
For any $\btau_h \in \Sigma_{k+1,h}$, it follows from the Poincar\'{e}
inequality and standard scaling argument that  
\[
\begin{aligned}
|\sum_{F\in\cF_h^i} \int_F [\btau_h] \cdot u|
& = | 
\sum_{F\in\cF_h^i} \inf_{p \in \cP_k(F;\R^n)} \int_F [\btau_h]
\cdot (u-p) | \\
& \lesssim \sum_{F\in \cF_h^i} \|[\btau_h]\|_{0,F} \inf_{p \in
  \cP_k(F;\R^n)} \|u-p\|_{0,F} \\
& \lesssim \sum_{F\in \cF_h^i} \|[\btau_h]\|_{0,F} h_F^{k+1/2}
|u|_{k+1,\cT_{h,F}} \\
& \lesssim \left(\sum_{F\in \cF_h^i}
h_F^{-1}\|[\btau_h]\|_{0,F}^2 \right)^{1/2} \left(\sum_{F\in
\cF_h^i} h_F^{2(k+1)}|u|_{k+1,\cT_{h,F}}^2  \right)^{1/2} \\ 
& \lesssim h^{k+1} \|\btau_h\|_{*,h} |u|_{k+1}.
\end{aligned}
\]
\end{proof}

We have the following approximation property of the finite element
spaces. 
\begin{lemma} \label{lem:approximation}
Assume that $\bsigma \in H^{k+2}(\Omega; \mS)$, $u \in H^{k+1}(\Omega;
\R^n)$, then 
\begin{subequations} 
\begin{align}
\inf_{\btau_h \in \Sigma_{k+1}} \|\bsigma - \btau_h\|_{*,h} &\lesssim
h^{k+1} |\bsigma|_{k+2}, \label{equ:approx-sigma}\\
\inf_{v_h \in V_{k,h}} \|u - v_h\|_0 & \lesssim h^{k+1}|u|_{k+1}.
\label{equ:approx-u}
\end{align}
\end{subequations}
\end{lemma}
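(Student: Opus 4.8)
The plan is to bound each infimum by evaluating the relevant distance at one concrete interpolant and then to split the star norm into its three constituent pieces. For the displacement estimate \eqref{equ:approx-u}, I would simply take $v_h = \Pi_h^0 u$, the elementwise $L^2$ projection onto $V_{k,h}$; since there is no interelement continuity constraint, a Bramble--Hilbert / scaling argument on each $K$ gives $\|u - \Pi_h^0 u\|_{0,K} \lesssim h_K^{k+1}|u|_{k+1,K}$, and summing over $K \in \cT_h$ yields \eqref{equ:approx-u} immediately.

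For the stress estimate \eqref{equ:approx-sigma}, the key observation is that the continuous Lagrangian space $\Sigma_{k+1,h}^c$ of \eqref{equ:lagrangian} is contained in $\Sigma_{k+1,h}$ for either choice $\Sigma_{k+1,h}^{(1)}$ in \eqref{equ:tensor-space1} or $\Sigma_{k+1,h}^{(2)}$ in \eqref{equ:tensor-space2}: a globally $H^1$-conforming $\cP_{k+1}$ tensor field has a single-valued normal trace across every interior face, so all its normal moments are continuous and its jump $[\cdot]$ vanishes identically. Hence I would choose $\btau_h = I_h \bsigma \in \Sigma_{k+1,h}^c \subset \Sigma_{k+1,h}$, where $I_h$ is exactly the Scott--Zhang interpolation operator already introduced in the proof of Lemma \ref{lem:inf-sup1}, now invoked with its full approximation orders rather than just its stability bound.

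With this choice the three parts of $\|\bsigma - I_h\bsigma\|_{*,h}^2$ are treated separately. The standard Scott--Zhang estimates give $\|\bsigma - I_h\bsigma\|_{0,K} \lesssim h_K^{k+2}|\bsigma|_{k+2,\tilde K}$ and $\|\nabla(\bsigma - I_h\bsigma)\|_{0,K} \lesssim h_K^{k+1}|\bsigma|_{k+2,\tilde K}$ on the local patch $\tilde K$ of $K$, whence $\|\div(\bsigma - I_h\bsigma)\|_{0,K} \le \|\nabla(\bsigma - I_h\bsigma)\|_{0,K} \lesssim h_K^{k+1}|\bsigma|_{k+2,\tilde K}$. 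Crucially, the interior-penalty contribution $\eta \sum_{F\in\cF_h^i} h_F^{-1}\|[\bsigma - I_h\bsigma]\|_{0,F}^2$ is \emph{exactly zero}, since both $\bsigma \in H(\div;\mS)$ and $I_h\bsigma \in H^1(\Omega;\mS)$ have continuous normal traces, so $[\bsigma - I_h\bsigma] = 0$. Summing the element contributions over $\cT_h$ using the finite-overlap property of the patches then delivers \eqref{equ:approx-sigma}, the binding term being the $\div$ piece of order $h^{k+1}$.

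The only genuine point to verify — rather than a real obstacle — is the containment $\Sigma_{k+1,h}^c \subset \Sigma_{k+1,h}$, i.e.\ that no nonconforming correction is needed to realize the optimal approximation order. Once that is granted, the estimate reduces to quoting the classical Scott--Zhang bounds together with the remark that a conforming interpolant annihilates the jump seminorm. I would therefore state this containment explicitly before carrying out the routine scaling computations, so that the reader sees why the nonconforming and bubble components of $\Sigma_{k+1,h}$ play no role in the approximation step even though they were essential for stability.
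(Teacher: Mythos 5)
Your proposal is correct and follows essentially the same route as the paper: the paper also takes $\btau_h = I_h\bsigma$ with the Scott--Zhang operator (whose target space $\Sigma_{k+1,h}^c$ is contained in $\Sigma_{k+1,h}$, as noted in the proof of Lemma \ref{lem:inf-sup1}), observes that the jump contribution to $\|\cdot\|_{*,h}$ vanishes so only $\|\bsigma - I_h\bsigma\|_{\div,h}$ remains, and takes $v_h = \Pi_h^0 u$ for the displacement. You merely spell out the standard scaling estimates and the containment $\Sigma_{k+1,h}^c\subset\Sigma_{k+1,h}$ in more detail than the paper does.
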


\begin{proof}
The approximation \eqref{equ:approx-sigma} follows from 
\[
\inf_{\btau_h \in \Sigma_{k+1,h}} \|\bsigma - \btau_h\|_{*,h} \leq 
\|\bsigma - I_h \bsigma \|_{\div, h} \leq h^{k+1} |\bsigma|_{k+2},
\]
since the Scott-Zhang interpolation operator $I_h$ preserves symmetric
$\cP_{k+1}$ functions locally. The approximation property of $V_h$ can
be proved by taking $v_h = \Pi_h^0 u$ on the left side of
\eqref{equ:approx-u}.
\end{proof}

In light of Theorem \ref{thm:estimate1}, Lemma \ref{lem:approximation}
and Lemma \ref{lem:consistent-error}, we have the following error
estimate.

\begin{theorem} \label{thm:estimate2}
Assume that the exact solution of problem \eqref{equ:elasticity}
satisfies $\bsigma \in H^{k+2}(\Omega; \mS)$, $u \in H^{k+1}(\Omega;
\R^n)$. Then
\begin{equation} \label{equ:estimate2}
\|\bsigma - \bsigma_h\|_{*,h} + \|u - u_h\|_0 \lesssim h^{k+1}
(|\bsigma|_{k+2} + |u|_{k+1}).
\end{equation}
\end{theorem}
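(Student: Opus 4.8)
The plan is to obtain \eqref{equ:estimate2} as a direct corollary of the three results already established: the quasi-optimal abstract bound in Theorem \ref{thm:estimate1}, the approximation estimates in Lemma \ref{lem:approximation}, and the consistency estimate in Lemma \ref{lem:consistent-error}. Theorem \ref{thm:estimate1} reduces the total error $\|\bsigma - \bsigma_h\|_{*,h} + \|u - u_h\|_0$ to the sum of a best-approximation term and a consistency term, so it suffices to bound each of these separately under the stated regularity $\bsigma \in H^{k+2}(\Omega;\mS)$ and $u \in H^{k+1}(\Omega;\R^n)$.

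First I would bound the best-approximation infimum. Choosing the Scott--Zhang interpolant $I_h\bsigma$ for the stress component and the $L^2$ projection $\Pi_h^0 u$ for the displacement component, Lemma \ref{lem:approximation} gives $\inf_{\btau_h}\|\bsigma - \btau_h\|_{*,h} \lesssim h^{k+1}|\bsigma|_{k+2}$ together with $\inf_{v_h}\|u - v_h\|_0 \lesssim h^{k+1}|u|_{k+1}$; adding these yields the bound $h^{k+1}(|\bsigma|_{k+2} + |u|_{k+1})$ for the infimum term. Next, the consistency term is precisely the quantity estimated in Lemma \ref{lem:consistent-error}, which under $u \in H^{k+1}(\Omega;\R^n)$ gives $\sup_{\btau_h}|\langle[\btau_h],u\rangle_{\cF_h^i}|/\|\btau_h\|_{*,h} \lesssim h^{k+1}|u|_{k+1}$. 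Substituting both bounds into \eqref{equ:estimate1} and absorbing the redundant $|u|_{k+1}$ contribution gives \eqref{equ:estimate2}.

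The assembly itself is routine; there is no genuine obstacle remaining at this stage. The real content has already been dispatched in the preceding development --- the discrete inf-sup stability of Theorem \ref{thm:stability} (which underlies the abstract bound of Theorem \ref{thm:estimate1}), the face-bubble interpolation constructed in Lemma \ref{lem:inf-sup1}, and the scaling estimates of Lemma \ref{lem:scaling}. I would only take care that the regularity hypotheses $\bsigma \in H^{k+2}$ and $u \in H^{k+1}$ are exactly those required by Lemmas \ref{lem:approximation} and \ref{lem:consistent-error}, so that all three estimates may be invoked simultaneously without any additional assumption, and then close the argument by the triangle inequality already built into \eqref{equ:estimate1}.
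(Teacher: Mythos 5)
Your proposal is correct and matches the paper exactly: the paper states Theorem \ref{thm:estimate2} as an immediate consequence of Theorem \ref{thm:estimate1}, Lemma \ref{lem:approximation}, and Lemma \ref{lem:consistent-error}, which is precisely the assembly you describe.
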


\section{Discussion and Reduced Elements} \label{sec:reduced}

In the proof of Theorem \ref{thm:stability}, we use the fact that
$R_k|_F \subset \cP_k(F;\R^n)$. Actually, the normal components of
face-bubble functions are only needed to recover the moments of rigid
motion on each face. Notice that 
\begin{equation} \label{equ:rigid-skew}
R_k(K) =
\begin{cases}
\R^n & \quad k=0, \\ 
\R^n + \mK x & \quad k\geq 1,
\end{cases}
\end{equation}
where $\mK$ represents the space of real skew-symmetric matrices of
order $n\times n$. This means the rigid motion on each face are at
most linear. This observation gives us some space to reduce the
dimension of face-bubble function spaces.

In light of Lemma \ref{lem:inf-sup2}, the remaining question is how to
pick up some face-bubble functions in $\tilde{\Sigma}_{k+1,h,f}$ to
recover the moments of $R_k|_F$.  For the lowest order case, $R_0|_F =
\cP_0(F;\R^n)$ and $\dim(\tilde{\Sigma}_{1,h,f}(K)\nu_F|_F) = n$,
which means that our nonconforming finite elements are optimal and the
interior penalty term has to be added. For the higher order case
$k\geq 1$, $R_k|_F \subset \cP_1(F;\R^n)$.  Traditionally, it suffices
to recover the normal component of stress up to moments of
$\cP_1(F;\R^n)$ to make the elements stable. Table
\ref{tab:n-bubble-2D} and \ref{tab:n-bubble-3D} illustrates the
dimension of $R_k|_F$, $\cP_1(F;\R^n)$, $\btau \nu|_F$ of
$\tilde{\Sigma}_{k+1,h,f}$ and $\Sigma_{k+1,h,f}^c$ in 2D and 3D. We
would like to emphasize that the $H^1(\mS)$ face-bubble function
spaces $\Sigma_{k+1,h,f}^c$ satisfiy 
\begin{equation} \label{equ:continuous-face-bubble2}
\Sigma_{k+1,h,f}^c \subset \tilde{\Sigma}_{k+1,h,f} \cap
\Sigma_{k+1,h}^c.
\end{equation}

\begin{table}[!ht] 
\small
\centering
\begin{tabular}{c|c|c|c|c}
  \hline
  $k$ & $R_k|_F$ & $\cP_0(F;\R^2)$ or $\cP_1(F;\R^2)$ & $\btau
  \nu|_F$ of $\tilde{\Sigma}_{k+1,h,f}$ & $\btau \nu|_F$ of
  $\Sigma_{k+1,h,f}^c$ 
  \\ \hline
  $k=0$ & 2 & 2 & 2 & 0 \\ \hline
  $k=1$ & 3 & 4 & 4 & 2 \\ \hline 
  $k=2$ & 3 & 4 & 6 & 4 \\ \hline
\end{tabular}
\caption{The dimension of $R_k|_F$, $\btau \nu|_F$ of
  $\tilde{\Sigma}_{k+1,h,f}$ and $\Sigma_{k+1,h,f}^c$ in 2D}
\label{tab:n-bubble-2D}
\end{table}

\begin{table}[!ht]
\small
\centering
\begin{tabular}{c|c|c|c|c}
  \hline
  $k$ & $R_k|_F$ & $\cP_0(F;\R^3)$ or $\cP_1(F;\R^3)$ & $\btau \nu|_F$
  of $\tilde{\Sigma}_{k+1,h,f}$ & $\btau \nu|_F$ of
  $\Sigma_{k+1,h,f}^c$ \\ \hline
  $k=0$ & 3 & 3 & 3 & 0 \\ \hline
  $k=1$ & 6 & 9 & 9 & 0 \\ \hline 
  $k=2$ & 6 & 9 & 18 & 3 \\ \hline 
  $k=3$ & 6 & 9 & 30 & 9 \\ \hline 
\end{tabular}
\caption{The dimension of $R_k|_F$, $\btau \nu|_F$ of
  $\tilde{\Sigma}_{k+1,h,f}$ and $\Sigma_{k+1,h,f}^c$ in 3D}
\label{tab:n-bubble-3D}
\end{table}

We can observe that the $H^1(\mS)$ face-bubble function is not enough
to do the job when $k \leq n-1$. A natural question: can we pick up
some $H(\div)$ conforming functions of degree $k+1$ whose normal
component will recover the $\cP_1(F;\R^n)$? For general grids, the
answer is negative when $1\leq k \leq n-1$.

\begin{lemma} \label{lem:maximal-face-bubble}
Given any $K = [a_1, \cdots, a_{n+1}]$ and $F_l \subset \partial K$.
For $\btau \in \cP_{k+1}(K;\mS)$, 
$$ 
\btau \nu|_{F_s} = 0 \qquad \forall s\neq l,
$$ 
is equivalent to 
\begin{equation} \label{equ:maximal-face-bubble} 
\btau \in \sum_{j=1,j\neq l}^{n+1} \lambda_j
\cP_k^{\hat{0},\hat{l}}(K;\R) t_{lj}t_{lj}^T \oplus \sum_{1\leq i < j
\leq n+1} \lambda_i \lambda_j \cP_{k-1}(K;\R) t_{ij}t_{ij}^T.
\end{equation} 
\end{lemma}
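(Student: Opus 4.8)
The plan is to prove both inclusions of the claimed equivalence, after first noting that the right-hand side of \eqref{equ:maximal-face-bubble} is exactly $\tilde{\Sigma}_{k+1,h,F_l}(K)\oplus\Sigma_{k+1,h,b}(K)$ in the notation of \eqref{equ:1-face-bubble} and \eqref{equ:local-c-div-bubble}. Throughout I would expand an arbitrary $\btau\in\cP_{k+1}(K;\mS)$ in the basis of Lemma \ref{lem:basis-S} as $\btau=\sum_{1\le i<j\le n+1}\btau_{ij}\,t_{ij}t_{ij}^T$ with uniquely determined scalar coefficients $\btau_{ij}\in\cP_{k+1}(K;\R)$, so that the whole argument reduces to reading off conditions on the $\btau_{ij}$ from the face constraints (the case $k=0$ being included via the convention $\cP_{-1}=\{0\}$).

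For the direction $(\Leftarrow)$ I would verify that each generating term satisfies $\btau\nu|_{F_s}=0$ for $s\neq l$. For a div-bubble term $\lambda_i\lambda_j q\,t_{ij}t_{ij}^T$ and a fixed face $F_s$ with $s\neq l$: either $s\notin\{i,j\}$, in which case $t_{ij}t_{ij}^T\nu_{F_s}=0$ by \eqref{equ:basis-normal}, or $s\in\{i,j\}$, in which case the factor $\lambda_s$ vanishes on $F_s$; either way the trace is zero. For a face-bubble term $\lambda_m q\,t_{lm}t_{lm}^T$ with $m\neq l$ and $q\in\cP_k^{\hat{0},\hat{l}}(K;\R)$, fixing $F_s$ with $s\neq l$: if $s\neq m$ then $s\notin\{l,m\}$ and \eqref{equ:basis-normal} kills the tensor, while if $s=m$ the factor $\lambda_m=\lambda_s$ vanishes on $F_s$. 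Hence the trace vanishes on every $F_s$, $s\neq l$.

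For the harder direction $(\Rightarrow)$ I would restrict $\btau\nu_{F_s}$ to a fixed face $F_s$ with $s\neq l$. By \eqref{equ:basis-normal} only the pairs containing $s$ survive, so
\[
(\btau\nu_{F_s})|_{F_s}=\sum_{m\neq s}(t_{sm}\cdot\nu_{F_s})\,(\btau_{sm}|_{F_s})\,t_{sm}.
\]
Since the edge tangents $\{t_{sm}:m\neq s\}$ form a basis of $\R^n$ and each $t_{sm}\cdot\nu_{F_s}\neq 0$ (because $a_s\notin F_s$), the vanishing of this trace forces $\btau_{sm}|_{F_s}=0$, i.e. $\mathcal{R}_s\btau_{sm}=0$, for every $m\neq s$. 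Collecting these at all faces $F_s$ with $s\neq l$ and using property (2) of Lemma \ref{lem:spaces}, I obtain the following constraints on the coefficients: for a pair $\{i,j\}$ with $i,j\neq l$ both faces are active, so $\btau_{ij}\in\lambda_i\cP_k(K;\R)\cap\lambda_j\cP_k(K;\R)$; for a pair $\{l,m\}$ only the face $F_m$ is active, so $\btau_{lm}\in\lambda_m\cP_k(K;\R)$.

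It then remains to identify these two constraint spaces with the summands of \eqref{equ:maximal-face-bubble}. For the first, coprimality of the distinct linear forms $\lambda_i,\lambda_j$ gives $\lambda_i\cP_k(K;\R)\cap\lambda_j\cP_k(K;\R)=\lambda_i\lambda_j\cP_{k-1}(K;\R)$, matching the div-bubble summand. For the second, multiplying the decomposition $\cP_k(K;\R)=\lambda_l\cP_{k-1}(K;\R)\oplus\cP_k^{\hat{0},\hat{l}}(K;\R)$ — which is \eqref{equ:decomposition-remark} with $i=l$, $j=0$ — by $\lambda_m$ yields $\lambda_m\cP_k(K;\R)=\lambda_l\lambda_m\cP_{k-1}(K;\R)\oplus\lambda_m\cP_k^{\hat{0},\hat{l}}(K;\R)$, which is precisely the face-bubble summand together with its associated div-bubble part. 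Summing over all pairs, the div-bubble parts combine to $\Sigma_{k+1,h,b}(K)$ and the face-bubble parts combine to $\tilde{\Sigma}_{k+1,h,F_l}(K)$, reproducing the right-hand side of \eqref{equ:maximal-face-bubble}. The only genuine hurdle is the bookkeeping in the pair-by-pair case split according to whether $l$ is one of the two indices, since that is what decides how many of the face constraints bear on a given coefficient; once the correct constraints are in hand, the two algebraic identities are immediate.
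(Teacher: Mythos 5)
Your proposal is correct and follows essentially the same route as the paper: expand $\btau$ in the basis $\{t_{ij}t_{ij}^T\}$, use \eqref{equ:basis-normal} together with the linear independence of $\{t_{sm}:m\neq s\}$ and $t_{sm}\cdot\nu_{F_s}\neq 0$ to deduce $\btau_{sm}|_{F_s}=0$, and then split the pairs according to whether they contain $l$, applying \eqref{equ:decomposition-remark} to the coefficients $\btau_{lm}$. The only cosmetic differences are that you spell out the $(\Leftarrow)$ direction (which the paper leaves as ``easy to check'') and justify the identification $\lambda_i\cP_k(K;\R)\cap\lambda_j\cP_k(K;\R)=\lambda_i\lambda_j\cP_{k-1}(K;\R)$ via coprimality, where the paper reads it off directly from the two face constraints.
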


\begin{proof}
We only prove the case that $l=1$ for simplicity. Denote 
$$ 
\btau = \sum_{1\leq i < j \leq n+1} p_{ij} t_{ij}t_{ij}^T \qquad
p_{ij} \in \cP_{k+1}(K;\R).
$$
It follows from $\btau \nu|_{F_s} = 0 ~(\forall s\neq 1)$ that 
$$ 
\sum_{1\leq i < j \leq n+1} p_{ij} t_{ij}t_{ij}^T \nu_{F_s} =
\sum_{j\neq s} p_{sj}t_{sj} (t_{sj}^T \nu_{F_s}) = 0 \qquad s = 2,
\cdots, n+1, 
$$
which yields $p_{sj}|_{F_s} = 0 ~(j \neq s)$, since $t_{sj}^T
\nu_{F_s} \neq 0$ and $\{t_{sj}, j\neq s\}$ are linearly independent.
Therefore, 
\begin{equation} \label{equ:decomposition-coef}
p_{ij} = \begin{cases}
\lambda_i \lambda_j \tilde{p}_{ij} \in \lambda_i \lambda_j
\cP_{k-1}(K;\R) & 2 \leq i < j \leq n+1, \\
\lambda_j \bar{p}_{1j} \in \lambda_j \cP_k(K;\R) & i = 1,
2\leq j \leq n+1. 
\end{cases} 
\end{equation}
From \eqref{equ:decomposition-remark}, $\bar{p}_{1j}$ in
\eqref{equ:decomposition-coef} can be decomposed as $\bar{p}_{1j} =
\tilde{p}^{\hat{0},\hat{1}}_j + \lambda_1 \tilde{p}_{1j} \in
\cP_k^{\hat{0},\hat{1}}(K;\R) \oplus \lambda_1 \cP_{k-1}(K;\R)$ and
consequently  
$$ 
\btau = \sum_{j=2}^{n+1} \lambda_j \tilde{p}_j^{\hat{0},\hat{1}}
t_{0j}t_{0j}^T + \sum_{1\leq i < j \leq n+1} \lambda_i \lambda_j
\tilde{p}_{ij} t_{ij}t_{ij}^T. 
$$ 
On the other hand, it is easy to check that
\eqref{equ:maximal-face-bubble} implies $\btau\nu|_{F_s} = 0
~(s=2,\cdots,n+1)$. Then we finish the proof.
\end{proof}

\begin{theorem} \label{thm:optimal}
Given any interior face $F = [a_2, \cdots, a_{n+1}] = K \cap K'$, $K
= [a_1, a_2, \cdots, a_{n+1}]$ and $K' = [a_1', a_2, \cdots,
a_{n+1}]$.  Suppose $\forall \{i_1, \cdots, i_s\} \subset \{2, \cdots,
  n+1\}, s\leq n-1$, 
\begin{equation} \label{equ:grid-assumption} 
[a_1, a_{i_1}, \cdots, a_{i_s}], \text{~and~} [a_1', a_{i_1}, \cdots,
  a_{i_s}] \text{~are not in the~} s-\dim \text{~hyperplane}, 
\end{equation}
then it is impossible to pick the $H(\div, K\cup K'; \mS)$ of degree
$k+1$ conforming face bubble functions to recover the moments of
$\cP_1(F;\R^n)$ when $k \leq n-1$.
\end{theorem}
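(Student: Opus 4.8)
The plan is to reduce the statement to a linear-algebra fact about admissible normal traces on $F$, and then to collapse the normal-to-$F$ component using the degree bound $k+1\le n$.

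First I would identify the achievable normal traces. An $H(\div,K\cup K';\mS)$-conforming face bubble $\btau$ has vanishing normal component on every face of $K$ and $K'$ other than $F$; on $K$ this means $\btau|_K\nu_{F_s}=0$ for all $s\neq 1$, so Lemma \ref{lem:maximal-face-bubble} (with $l=1$) applies. Restricting the resulting representation to $F=F_1$ and using $\lambda_1|_{F_1}=0$ annihilates the entire interior div-bubble part $\sum_{i<j}\lambda_i\lambda_j\cP_{k-1}(K;\R)t_{ij}t_{ij}^T$, while \eqref{equ:basis-normal} leaves only the terms built on $t_{1j}t_{1j}^T$. Together with $\cP_k^{\hat{0},\hat{1}}(K;\R)|_{F_1}\cong\cP_k(F;\R)$ (Lemma \ref{lem:spaces}), this gives
\[
\btau|_K\,\nu_F|_{F}\in W_K:=\bigoplus_{j=2}^{n+1}\lambda_j\,\cP_k(F;\R)\,t_{1j},
\]
where $\lambda_j$ now denotes the barycentric coordinate of $F$. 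The same computation on $K'$ produces the analogous space $W_{K'}$, with $t_{1j}$ replaced by $t_{1'j}$. Since conformity forces the two normal traces to agree, the achievable traces lie in $W:=W_K\cap W_{K'}$, and recovering the moments of $\cP_1(F;\R^n)$ (in the sense required by Lemma \ref{lem:inf-sup2}) is exactly the surjectivity of the $L^2(F)$-projection $\Pi_{\cP_1}\colon W\to\cP_1(F;\R^n)$. I will show this fails by exhibiting a nonzero $p\in\cP_1(F;\R^n)$ that is $L^2(F)$-orthogonal to all of $W$.

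The geometric heart is the behaviour of $w\in W$ on the facets of $F$. Fix a vertex $a_m$ ($2\le m\le n+1$) and let $G_m$ be the facet of $F$ opposite $a_m$, with index set $I=\{2,\dots,n+1\}\setminus\{m\}$. On $G_m$ the factor $\lambda_m$ vanishes, so $w\in W_K$ gives $w|_{G_m}\in\mathrm{span}\{t_{1j}:j\in I\}=:U$, while $w\in W_{K'}$ gives $w|_{G_m}\in\mathrm{span}\{t_{1'j}:j\in I\}=:U'$. Both $U$ and $U'$ are $(n-1)$-dimensional and both contain the $(n-2)$-dimensional tangent space $T_{G_m}=\mathrm{span}\{a_j-a_{j'}:j,j'\in I\}$. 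Assumption \eqref{equ:grid-assumption} with $s=n-1$ states precisely that $\{a_1,a_1'\}\cup\{a_j:j\in I\}$ is not contained in an $(n-1)$-dimensional hyperplane, which forces $a_1'-a_1\notin U$ and hence $\dim(U+U')=n$; a dimension count then yields $U\cap U'=T_{G_m}$. Consequently $w|_{G_m}$ is tangent to $G_m\subset F$, so the scalar $w\cdot\nu_F$ vanishes on $G_m$. Letting $m$ range over $2,\dots,n+1$ shows that $w\cdot\nu_F$ vanishes on all of $\partial F$.

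Finally I would exploit the degree bound. The scalar $w\cdot\nu_F$ has degree at most $k+1\le n$ and vanishes on each of the $n$ facets $\{\lambda_m=0\}$, so it is divisible by $b:=\prod_{m=2}^{n+1}\lambda_m$, a polynomial of degree $n$. Hence $w\cdot\nu_F=0$ when $k\le n-2$, and $w\cdot\nu_F\in\R b$ when $k=n-1$; in either case $\{w\cdot\nu_F:w\in W\}$ spans at most a one-dimensional subspace. Since $\dim\cP_1(F;\R)=n\ge 2$, I can choose $\phi\in\cP_1(F;\R)\setminus\{0\}$ with $\int_F\phi\,b=0$ (a single linear constraint on an $n$-dimensional space); then $p:=\phi\,\nu_F\in\cP_1(F;\R^n)$ is nonzero and satisfies $\int_F w\cdot p=\int_F(w\cdot\nu_F)\phi=0$ for every $w\in W$. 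Thus $\Pi_{\cP_1}W\neq\cP_1(F;\R^n)$, proving that conforming $H(\div)$ face bubbles of degree $k+1$ cannot recover the moments of $\cP_1(F;\R^n)$ when $k\le n-1$.

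I expect the main obstacle to be the facet identity $U\cap U'=T_{G_m}$: this is where \eqref{equ:grid-assumption} enters essentially, and one must verify that the two $(n-1)$-planes meet exactly in the tangent space of the shared facet rather than in anything larger. The inequality $k+1\le n$ is the second decisive ingredient, since it is precisely what collapses $w\cdot\nu_F$ to at most a multiple of the full facet bubble $b$; as a consistency check, the special case $s=1$ of \eqref{equ:grid-assumption} already forces $w(a_m)=0$ at every vertex, recovering the strongly regular/non-collinearity condition \eqref{equ:regular-mesh} used in the lowest-order analysis.
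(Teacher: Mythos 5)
Your proposal is correct, and it follows the same overall strategy as the paper: both arguments start from Lemma \ref{lem:maximal-face-bubble} to pin the achievable traces down to $W=\bigoplus_j\lambda_j\cP_k(F;\R)t_{1j}\cap\bigoplus_j\lambda_j\cP_k(F;\R)t_{1'j}$, and both conclude that the normal-to-$F$ component of any $w\in W$ is at most a scalar multiple of the face bubble $\lambda_2\cdots\lambda_{n+1}$. Where you diverge is the mechanism for this collapse. The paper expands $\btau\nu|_F$ in the (linearly independent) degree-$(k{+}1)$ monomials of the barycentric coordinates of $F$ and applies assumption \eqref{equ:grid-assumption} separately for every support size $s\le n-1$ to show each coefficient with some $m_i=0$ is tangent to $F$; you instead restrict $w$ to each facet $G_m$ of $F$, use only the $s=n-1$ case of \eqref{equ:grid-assumption} to get $U\cap U'=T_{G_m}$, and then invoke divisibility of the degree-$(\le n)$ polynomial $w\cdot\nu_F$ by the product of the $n$ facet equations. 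Your route uses a strictly weaker hypothesis (only the top case of \eqref{equ:grid-assumption}) and replaces the coefficient bookkeeping by a one-line divisibility argument. Your closing step is also slightly more careful than the paper's: the paper's final display asserts that the whole trace space $\{\btau\nu|_F\}$ is at most one-dimensional, when the argument really only controls its normal component; you correctly isolate that component and exhibit an explicit non-recoverable moment $p=\phi\,\nu_F$ with $\int_F\phi\,\lambda_2\cdots\lambda_{n+1}=0$, which is exactly what the non-surjectivity claim requires.
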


\begin{proof}
For any face bubble function $\btau \in H(\div, K\cup K'; \mS)$, from
Lemma \ref{lem:maximal-face-bubble} we know 
$$ 
\btau \nu|_F \in \sum_{j=2}^{n+1} \lambda_j \cP_k(F;\R) t_{1j} \cap
\sum_{j=2}^{n+1} \lambda_j \cP_k(F;\R) t_{1'j}.
$$ 
Moreover, it can be written in the following form
$$
\btau \nu|_{F} = \sum_{|\boldsymbol{m}|=k+1}  c_{\boldsymbol{m}}
\lambda_{2}^{m_2} \cdots \lambda_{n+1}^{m_{n+1}} \qquad m_i \geq 0, 
$$
where $c_{\boldsymbol{m}}$ is the coefficient vector. We collect the
monomial terms of $\btau \nu|_{F}$ in the following two cases:
\begin{enumerate} 
\item There exists $i$ such that $m_i = 0$. Thus, at least one term of
$\lambda_2, \cdots, \lambda_{n+1}$ does not appear, then the
coefficient $c_{\boldsymbol{m}}$ belongs to $\text{span}\{t_{1i_1},
\cdots, t_{1i_s}\} \cap \text{span}\{t_{1'i_1}, \cdots, t_{1'i_s}\}$,
which lies in $F_0$ by the assumption \eqref{equ:grid-assumption}.  In
this case, $c_{\boldsymbol{m}} \cdot \nu_F = 0$.  
\item $\alpha_i >
0$ for all $i = 1,2, \cdots, n$. Since $k\leq n-1$, the only possible
term is $\lambda_1 \lambda_2 \cdots \lambda_n$ with a constant vector
as coefficient.
\end{enumerate}
Therefore, 
$$ 
\btau \nu|_F = c_{1,1,\cdots,1} \lambda_2 \cdots \lambda_{n+1}.
$$ 
Namely, 
$$ 
\dim\left( \{\btau\nu|_F~|~ \btau ~\text{is}~H(\div, K\cup
K')~\text{conforming face-bubble}\}\right) \leq 1 <
\dim(\cP_1(F;\R^n)),
$$ 
which means the normal components of conforming face-bubble functions
can not recover the moments of $\cP_1(F;\R^n)$ when $k \leq n-1$.
\end{proof}

\begin{remark}
Theorem \ref{thm:optimal} admits $R_k|_{F}$ since the dimension of its
normal components can be great than 1 for $1\leq k \leq n-1$.  
\end{remark}

From Theorem \ref{thm:optimal}, the nonconforming finite elements of
degree $k+1$ have to be used to construct the face-bubble function
spaces when $k \leq n-1$. Let $\{\varphi_{F,t}, t = 1, \cdots, n\}$ be
a basis of $\cP_1(F;\R)$. Then we only need the
corresponding $n^2$ face-bubble functions $\bphi_{F}^{s,t} (s,l =
1,\cdots,n)$ defined in \eqref{equ:face-bubble-expression} to
recover the moments of $\cP_1(F;\R^n)$. These elements reduce the
local dimension of nonconforming face-bubble functions from
$nC_{n-1+k}^k$ to $n^2$, which does work when $2\leq k \leq n-1$.

For the case that $k \geq n$, one of the significant results proposed
by Hu \cite{hu2015finite} is that the $H^1(\mS)$ face-bubble functions
can recover the moments of $\cP_1(F;\R^n)$, which can be seen from
the second case in the proof of Theorem \ref{thm:optimal}.  Precisely,
on face $F = [a_2, \cdots, a_{n+1}]$, the normal components of
$H^1(\mS)$ face-bubble functions $\lambda_2 \cdots \lambda_{n+1}
\cP_1(K;\R) \mS\nu_F$ will recover the moments of $\cP_1(F;\R^n)$
since $\mS\nu_F = \R^n$. Thanks to the $H(\div)$ conformity of the
$H^1(\mS)$ face-bubble functions, the interior penalty term is
degenerated to be zero consequently. On the other hand, Theorem
\ref{thm:optimal} also indicates that we need enough $H(\div)$ bubble
functions that contain the factor $\lambda_2 \cdots \lambda_{n+1}$ to
satisfies \eqref{equ:inf-sup2}. 

\begin{lemma}
Given any interior face $F = [a_2, \cdots, a_{n+1}] = K \cap K'$, $K =
[a_1, a_2, \cdots, a_{n+1}]$ and $K' = [a_1', a_2, \cdots, a_{n+1}]$.
For any $\btau \in H(\div, K\cup K'; \mS) \cap \cP_{n+1}(K\cup
K';\mS)$ that 
\begin{equation} \label{equ:Hdiv-1-n} 
\btau|_K, \btau|_{K'} \in \lambda_2\lambda_2 \cdots \lambda_{n+1}
\cP_1(\mS),
\end{equation}
then there exists $\btheta \in H^1(K\cup K';\mS) \cap \cP_{n+1}(K\cup
K';\mS)$ such that $\btau - \btheta \in \Sigma_{n+1,h,b}$.
\end{lemma}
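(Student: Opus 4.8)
The plan is to produce a globally $H^1$-conforming symmetric face bubble $\btheta$, supported on $K\cup K'$ and vanishing on every face except $F$, whose normal trace on $F$ agrees with that of $\btau$; the difference $\btau-\btheta$ will then have vanishing normal trace on all faces of both $K$ and $K'$, which I will show forces it into $\Sigma_{n+1,h,b}$. To set up, write $\btau|_K=\lambda_2\cdots\lambda_{n+1}\,\btau_1^K$ with $\btau_1^K\in\cP_1(K;\mS)$, and set $g:=\btau|_K\nu_F$. Since $\btau\in H(\div,K\cup K';\mS)$, the jump $[\btau]|_F=0$, so the outward normal traces of $\btau|_K$ and $\btau|_{K'}$ on $F$ are opposite; this is exactly the compatibility that lets a single-valued trace reproduce both sides.

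For the construction of $\btheta$ I would use that the map $M\mapsto M\nu_F$ from $\mS$ onto $\R^n$ is surjective (for instance $M=v\nu_F^T+\nu_F v^T-(v\cdot\nu_F)\nu_F\nu_F^T$ satisfies $M\nu_F=v$). Applying this pointwise to $(\btau_1^K\nu_F)|_F\in\cP_1(F;\R^n)$ yields a tensor field $\btheta_1^F\in\cP_1(F;\mS)$ with $\btheta_1^F\nu_F=(\btau_1^K\nu_F)|_F$ on $F$. Extending $\btheta_1^F$ to $\cP_1(K;\mS)$ and to $\cP_1(K';\mS)$ so that the two extensions agree on $F$, and multiplying by $\lambda_2\cdots\lambda_{n+1}$, defines an $H^1(\mS)$ face bubble of the type \eqref{equ:H1-face-bubble}. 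It lies in $H^1(K\cup K';\mS)\cap\cP_{n+1}(K\cup K';\mS)$ because the extensions share the trace on $F$ and both vanish on the remaining faces (each carries the factor $\lambda_2\cdots\lambda_{n+1}$). By construction $\btheta|_K\nu_F=\btau|_K\nu_F=g$, and by the opposite-normal-trace identity the same holds from the $K'$ side.

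It then remains to verify $\btau-\btheta\in\Sigma_{n+1,h,b}$, and here I would invoke the earlier structural results. On $K$, both $\btau|_K$ and $\btheta|_K$ vanish on the faces $F_s$ with $s\ne1$, so $(\btau-\btheta)|_K\nu_{F_s}=0$ there, while $(\btau-\btheta)|_K\nu_F=0$ by the choice of $\btheta$; thus $(\btau-\btheta)|_K$ has vanishing normal trace on all faces of $K$. Lemma~\ref{lem:maximal-face-bubble} (with $l=1$, $k=n$) then places it in $\sum_{j=2}^{n+1}\lambda_j\cP_n^{\hat0,\hat1}(K;\R)t_{1j}t_{1j}^T\oplus\Sigma_{n+1,h,b}(K)$. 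Taking the normal trace on $F$ of a member of the first summand gives $\sum_{j}\lambda_j q_j\,(t_{1j}^T\nu_F)\,t_{1j}|_F$, which vanishes only if every $q_j|_F=0$, since the $t_{1j}$ are linearly independent and $t_{1j}^T\nu_F\ne0$ by \eqref{equ:basis-normal}; because $\cP_n^{\hat0,\hat1}(K;\R)\cong\cP_n(F;\R)$ under restriction (Lemma~\ref{lem:spaces}, property~4), this forces $q_j=0$, so $(\btau-\btheta)|_K\in\Sigma_{n+1,h,b}(K)$. The identical argument on $K'$ finishes the proof. I expect the construction step to be the main obstacle: one must obtain $\btheta$ that is simultaneously globally continuous and reproduces the prescribed normal trace of $\btau$ from both elements, and it is precisely the surjectivity of $M\mapsto M\nu_F$ onto $\R^n$, together with the $H(\div)$-conformity of $\btau$, that makes this possible.
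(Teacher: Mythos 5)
Your proof is correct and follows essentially the same route as the paper: exploit the surjectivity of $M \mapsto M\nu_F$ (i.e.\ $\mS\nu_F = \R^n$) to build an $H^1(\mS)$ face bubble $\btheta \in \lambda_2\cdots\lambda_{n+1}\cP_1(\mS)$ whose normal trace on $F$ matches that of $\btau$, so that the difference has vanishing normal trace on all faces and lands in $\Sigma_{n+1,h,b}$. You merely make explicit what the paper leaves implicit (the formula for a right inverse of $M\mapsto M\nu_F$, and the verification via Lemma~\ref{lem:maximal-face-bubble} that a vanishing normal trace on the whole boundary forces membership in $\Sigma_{n+1,h,b}(K)$).
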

\begin{proof}
Due to the $H(\div)$ conformity of $\btau$, we know that 
$$ 
\btau \nu|_F \in \lambda_2 \cdots \lambda_{n+1} \cP_1(F;\R^n).
$$ 
Since $\mS \nu_{F} = \R^n$, there exists $\btheta \in H^1(K\cup
K';\mS) \cap \cP_{n+1}(K\cup K';\mS)$ such that 
$$ 
\btheta \in \lambda_2 \cdots \lambda_{n+1} \cP_1(\mS)\quad \text{and}
\quad \btheta \nu|_F = \btau \nu|_F.
$$ 
Thus, 
$$ 
(\btau - \btheta)\nu|_F = 0 \qquad \text{and} \quad (\btau -
\btheta)|_{F_i} = 0, \quad i = 2, \cdots, n+1,
$$ 
which yields $\btau - \btheta \in \Sigma_{n+1,h,b}$.
\end{proof}

The above lemma means that $H(\div;\mS)$ face-bubble functions in the
form of \eqref{equ:Hdiv-1-n} can be derived by the combination of
$H^1(\mS)$ face bubble function and proper div-bubble function. In
this sense, the finite elements proposed by Hu \cite{hu2015finite} are
optimal for the case that $k \geq n$.

\section{Numerical results} \label{sec:numerical}

In this section, we give the numerical results for both 2D and 3D
cases. The simulation is implemented using the MATLAB software package
$i$FEM \cite{iFEM}. The compliance tensor in our computation is 
\[
\cA\bsigma = \frac{1}{2\mu} \left(\bsigma - \frac{\lambda}{2\mu +
n\lambda} \tr(\bsigma)\bI_n \right),
\]
where $\bI_n \in \R^{n\times n}$ is the identity matrix. The Lam\'{e}
constants are set to be $\mu = 1/2$ and $\lambda = 1$. 

\subsection{2D Test}
The 2D displacement problem is computed on the unit square $\Omega =
[0,1]^2$ with a homogeneous boundary condition that $u = 0$ on
$\partial \Omega$. Let the exact solution be 
\[
u = 
\begin{pmatrix}
\re^{x-y} xy(1-x)(1-y) \\
\sin(\pi x)\sin(\pi y)
\end{pmatrix}.
\]
The exact stress function $\bsigma$ and the load function $f$ can be
analytically derived from \eqref{equ:elasticity} for a given $u$.

The uniform grids with different grid sizes are applied in the
computation. We would like to emphasize that the uniform grids satisfy
the strongly regular assumption \eqref{equ:regular-mesh} so that the
discrete systems when applying $\Sigma_{1,h}^{(2)}$ for stress can be
solved by direct solver, for example Matlab backslash solver. The
parameter $\eta$ in \eqref{equ:bilinear-a} is set to be $1$ in the 2D
test.

\begin{table}[!ht] 
\small
\centering
\begin{tabular}{p{.4cm}|p{1.3cm}p{.4cm}|p{1cm}p{.4cm}|p{1.1cm}p{.4cm}|p{1.2cm}p{.4cm}|p{.8cm}p{1cm}}
  \hline
  $1/h$	&$\|u -u_h\|_0$	& $h^n$	&$\|\bepsilon_h\|_0$&$h^n$
  &$\|\div_h\bepsilon_h\|_0$	&$h^n$ &
  $\|[\bsigma_h]\|_{0,\cF_h^i}$ & $h^n$	& $\dim V_{0,h}$ & $\dim
  \Sigma_{1,h}^{(1)}$  \\ \hline
  8		&0.06731 &--	&0.17195 &--	&1.93423	&--	& 0.03804 &-- &256 	&800 \\
  16	&0.03355 &1.00	&0.07954 &1.11	&0.97005	&1.00	& 0.01391 &1.45 &1024	&3136	\\
  32	&0.01676 &1.00	&0.03886 &1.03  &0.48539	&1.00	& 0.00496 &1.49 &4096	&12416	\\
  64	&0.00838 &1.00	&0.01931 &1.01	&0.24274	&1.00	& 0.00176 &1.50 &16384	&49408	\\
  \hline
\end{tabular}
\caption{The error, $\bepsilon_h = \bsigma - \bsigma_h$, and
  convergence order for 2D on uniform grids, $\Sigma_{1,h}^{(1)}$}
\label{tab:numerical-2D-1}
\end{table}

\begin{table}[!ht] 
\small
\centering
\begin{tabular}{p{.4cm}|p{1.3cm}p{.4cm}|p{1cm}p{.4cm}|p{1.1cm}p{.4cm}|p{1.2cm}p{.4cm}|p{.8cm}p{1cm}}
  \hline
  $1/h$	&$\|u -u_h\|_0$	&$h^n$	&$\|\bepsilon_h\|_0$&$h^n$
  &$\|\div_h\bepsilon_h\|_0$	&$h^n$ &
  $\|[\bsigma_h]\|_{0,\cF_h^i}$ & $h^n$	& $\dim V_{0,h}$ & $\dim
  \Sigma_{1,h}^{(2)}$  \\ \hline
  8		&0.11497 &--	&0.27495 &--	&1.93423	&--	& 0.08925 &-- &256 	&595 \\
  16	&0.06714 &0.78	&0.10042 &1.45	&0.97005  &1.00	& 0.04116 &1.12 &1024	&2339	\\
  32	&0.03578 &0.91	&0.03294 &1.61  &0.48539	&1.00	& 0.01613 &1.35 &4096	&9283	\\
  64	&0.01832 &0.97	&0.01066 &1.63	&0.24274	&1.00	& 0.00593 &1.44 &16384 &36995	\\
  \hline
\end{tabular}
\caption{The error, $\bepsilon_h = \bsigma - \bsigma_h$, and
  convergence order for 2D on uniform grids, $\Sigma_{1,h}^{(2)}$}
\label{tab:numerical-2D-2}
\end{table}

\begin{table}[!ht] 
\small
\centering
\begin{tabular}{p{.4cm}|p{1.3cm}p{.4cm}|p{1cm}p{.4cm}|p{1.1cm}p{.4cm}|p{1.2cm}p{.4cm}|p{.8cm}p{1cm}}
  \hline
  $1/h$	&$\|u -u_h\|_0$	&$h^n$	&$\|\bepsilon_h\|_0$&$h^n$
  &$\|\div_h\bepsilon_h\|_0$	&$h^n$ &
  $\|[\bsigma_h]\|_{0,\cF_h^i}$ & $h^n$	& $\dim V_{0,h}$ & $\dim
  \Sigma_{1,h}^{(2)}$  \\ \hline
  8		&0.07784 &--	&0.13044 &--	&1.53835	&--	& 0.06441 &-- &352 	&813 \\
  16	&0.04108 &0.92	&0.05275 &1.31	&0.77269  &0.99	& 0.02627 &1.29 &1408	&3207	\\
  32	&0.02142 &0.94	&0.01988 &1.41  &0.38678	&1.00	& 0.01014 &1.37
      &5632	&12747	\\
  64	&0.01097 &0.97	&0.00724 &1.46	&0.19344	&1.00	& 0.00375 &1.44 &22528 &50835	\\
  \hline
\end{tabular}
\caption{The error, $\bepsilon_h = \bsigma - \bsigma_h$, and
  convergence order for 2D on unstructured grids, $\Sigma_{1,h}^{(2)}$}
\label{tab:numerical-2D-unstructured}
\end{table}

First, we use $\Sigma_{1,h}^{(1)}$ for the stress approximation. The
errors and the convergence order in various norms are listed in Table
\ref{tab:numerical-2D-1}. The first order convergence is observed for
both displacement and stress. The $L^2$ error of the stress jump on
interior edge is convergent with order $1.5$, as the theoretical
error estimate \eqref{equ:estimate2}. When applying $\Sigma_{1,h}^{(2)}$
for the stress approximation, the dimension of $\Sigma_{1,h}$ has been
reduced by approximately 25\%, see Table \ref{tab:numerical-2D-2}.
To our supervise, the convergence order of $L^2$ error for stress is
much higher than the error estimate \eqref{equ:estimate2} when using
$\Sigma_{1,h}^{(2)}$.  The phenomenon can also be observed on the
uniformly refined unstructured grids, see Table
\ref{tab:numerical-2D-unstructured}. 

In Table \ref{tab:numerical-2D-2-k1}, we list the errors of
$\bsigma_h$ and $u_h$ with finite element spaces $\Sigma_{2,h}^{(1)}
\times V_{1,h}$. Again, we observe the optimal convergence rates of
both stress and displacement.

\begin{table}[!ht] 
\small
\centering
\begin{tabular}{p{.4cm}|p{1.3cm}p{.4cm}|p{1cm}p{.4cm}|p{1.1cm}p{.4cm}|p{1.2cm}p{.4cm}|p{.8cm}p{1cm}}
  \hline
  $1/h$	&$\|u -u_h\|_0$	&$h^n$	&$\|\bepsilon_h\|_0$&$h^n$
  &$\|\div_h\bepsilon_h\|_0$	&$h^n$ &
  $\|[\bsigma_h]\|_{0,\cF_h^i}$ & $h^n$	& $\dim V_{1,h}$ & $\dim
  \Sigma_{2,h}^{(1)}$  \\ \hline
  4		&0.01983 &--	&0.04152 &--	&0.57945	&--	& 0.02688 &-- & 192 	&
  416\\
  8		&0.00503 &1.98	&0.00821 &2.34	&0.14651	&1.98	& 0.00509 &
  2.40 & 768 	&
  1600\\
  16	&0.00126 &1.99	&0.00189 &2.12	&0.03674  &2.00	& 0.00092 &
  2.47
  & 3072	&6272	\\
  32	&0.00032 &2.00	&0.00046 &2.03  &0.00924	&1.99	& 0.00016 &2.49
  & 12288	&24832	\\
  \hline
\end{tabular}
\caption{The error, $\bepsilon_h = \bsigma - \bsigma_h$, and
  convergence order for 2D on uniform grids, $\Sigma_{2,h}^{(1)}$}
\label{tab:numerical-2D-2-k1}
\end{table}

\subsection{3D Test}

The 3D pure displacement problem is computed on the unit cube $\Omega =
[0,1]^3$ with a homogeneous boundary condition that $u = 0$ on
$\partial \Omega$. Let the exact solution be 
\[
u = 
\begin{pmatrix}
2^4\\
2^5\\
2^6
\end{pmatrix}x(1-x)y(1-y)z(1-z).
\]
The true stress function $\bsigma$ and the load function $f$ can be
analytically derived from the \eqref{equ:elasticity} for a given solution $u$. 

\begin{table}[!ht] 
\small
\centering
\begin{tabular}{p{.4cm}|p{1.3cm}p{.4cm}|p{1cm}p{.4cm}|p{1.1cm}p{.4cm}|p{1.2cm}p{.4cm}|p{.8cm}p{1cm}}
  \hline
  $1/h$	&$\|u-u_h\|_0$	&$h^n$ &$\|\bepsilon_h\|_0$ &$h^n$
  &$\|\div_h\bepsilon_h\|_0$	&$h^n$	&
  $\|[\bsigma_h]\|_{0,\cF_h^i}$ & $h^n$	& $\dim V_{0,h}$ & $\dim
  \Sigma_{1,h}^{(1)}$  \\ \hline
  2		&0.22624 &--		&1.05758 &--		&8.05894	&--	&0.21689	& --	 &144 &936 \\ 
  4		&0.12549 &0.85	&0.47884 &1.14	&4.48971	&0.84 	&0.13908	&0.64	 &1152	&7200 \\
  8		&0.06345 &0.98	&0.20060 &1.25	&2.30280	&0.96	&0.05726	&1.28	 &9216	&56448\\
  16	&0.03175 &0.99	&0.09094 &1.14	&1.15867	&0.99	 &0.02104	&1.45	 &73728	&446976 \\
  \hline
\end{tabular}
\caption{The error, $\bepsilon_h = \bsigma - \bsigma_h$, and
  convergence order for 3D on uniform grids, $\Sigma_{1,h}^{(1)}$}
\label{tab:numerical-3D-1}
\end{table}

\begin{table}[!ht] 
\small
\centering
\begin{tabular}{p{.4cm}|p{1.3cm}p{.4cm}|p{1cm}p{.4cm}|p{1.1cm}p{.4cm}|p{1.2cm}p{.4cm}|p{.8cm}p{1cm}}
  \hline
  $1/h$	&$\|u -u_h\|_0$	&$h^n$	&$\|\bepsilon_h\|_0$&$h^n$
  &$\|\div_h\bepsilon_h\|_0$	&$h^n$ &
  $\|[\bsigma_h]\|_{0,\cF_h^i}$ & $h^n$	& $\dim V_{0,h}$ & $\dim
  \Sigma_{1,h}^{(2)}$  \\ \hline
  2		&0.26120 &--		&1.39194 &--		&8.05894	&--	  & 0.28483 &--   &144		&378 \\ 
  4		&0.15504 &0.75	&0.78910 &0.81	&4.48917	&0.84	& 0.24513 &0.22 &1152		&2766 \\
  8		&0.07923 &0.97	&0.26868 &1.55	&2.30280	&0.96	& 0.12466 &0.98 &9216 	&21654 \\
  16	&0.03937 &1.01	&0.08303 &1.69	&1.15867  &0.99	& 0.04932 &1.34 &73728	&172326	\\
  \hline
\end{tabular}
\caption{The error, $\bepsilon_h = \bsigma - \bsigma_h$, and
  convergence order for 3D on uniform grids, $\Sigma_{1,h}^{(2)}$}
\label{tab:numerical-3D-2}
\end{table}

The numerical results when applying two classes of spaces on 3D uniform
grids are illustrated in Table \ref{tab:numerical-3D-1} and
\ref{tab:numerical-3D-2}. Here we set the parameter of the penalty term
as $\eta=1$ for the pair $\Sigma_{1,h}^{(1)}-V_h$, and  $\eta=0.1$ for the
pair $\Sigma_{1,h}^{(2)}-V_h$. It can be observed that, similar to the 2D
case, the optimal orders of convergence are achieved for two classes of
spaces. We also note that the global dimension of the space for stress
has been reduced by approximately $60\%$ for $\Sigma_{1,h}^{(2)}$.

\section{Concluding Remarks}
In this paper we propose mixed finite elements of any order for the
linear elasticity in any dimension. According to the stability for
$R_k^\perp$ and $R_k$, and the approximation property, we have the
following choices for the finite elements. 

\begin{table}[!ht] 
\small
\centering
\begin{tabular}{|c|c|c|c|}
  \hline
 & Stability of $R_k^\perp$ & Stability of $R_k$ & Approximation
 property \\ \hline 
$\Sigma_{k+1,h,b}$  &$\surd$&& \\ 
  \hline 
$\tilde{\Sigma}_{k+1,h,f}$ &&$\surd$& \\
  \hline 
$\tilde{\Sigma}_{k+1,h,b}$&&& if $\Sigma_{k+1,h,b}$ and
$\tilde{\Sigma}_{k+1,h,f}$ are chosen \\
  \hline 
$\Sigma_{k+1,h}^c$ &&&$\surd$ \\
  \hline 
$\Sigma_{k+1,h,f}^c$ &&$k\geq n$& \\
  \hline 
\end{tabular}
\caption{Different choices of spaces}
\label{tab:concluding}
\end{table}

Based on the Table \ref{tab:concluding}, we have three choices that
the three ingredients are satisfied: 
\begin{enumerate}
\item $\Sigma_{k+1,h,b} + \tilde{\Sigma}_{k+1,h,f} +
\tilde{\Sigma}_{k+1,h,b}$. We also prove that the sum is direct based
on the local decomposition of discrete symmetric tensors. The lower
order ($k\leq n-1$) finite element diagrams of this class are depicted
in Figure \ref{fig:diagram-2D} and \ref{fig:diagram-3D} for 2D and 3D,
respectively. 
\item $\Sigma_{k+1,h,b} + \tilde{\Sigma}_{k+1,h,f} +
\Sigma_{k+1,h}^c$. This class of finite elements does not have local
d.o.f. but has fewer global dimension.
\item $\Sigma_{k+1,h,b} + \Sigma_{k+1,h}^c$ for $k\geq n$. This
class of conforming elements has been found by Hu \cite{hu2015finite}. 
\end{enumerate}

\begin{figure}[!htbp]
\centering 
\captionsetup{justification=centering}
\subfloat[$k=0$]{\centering 
   \includegraphics[width=0.28\textwidth]{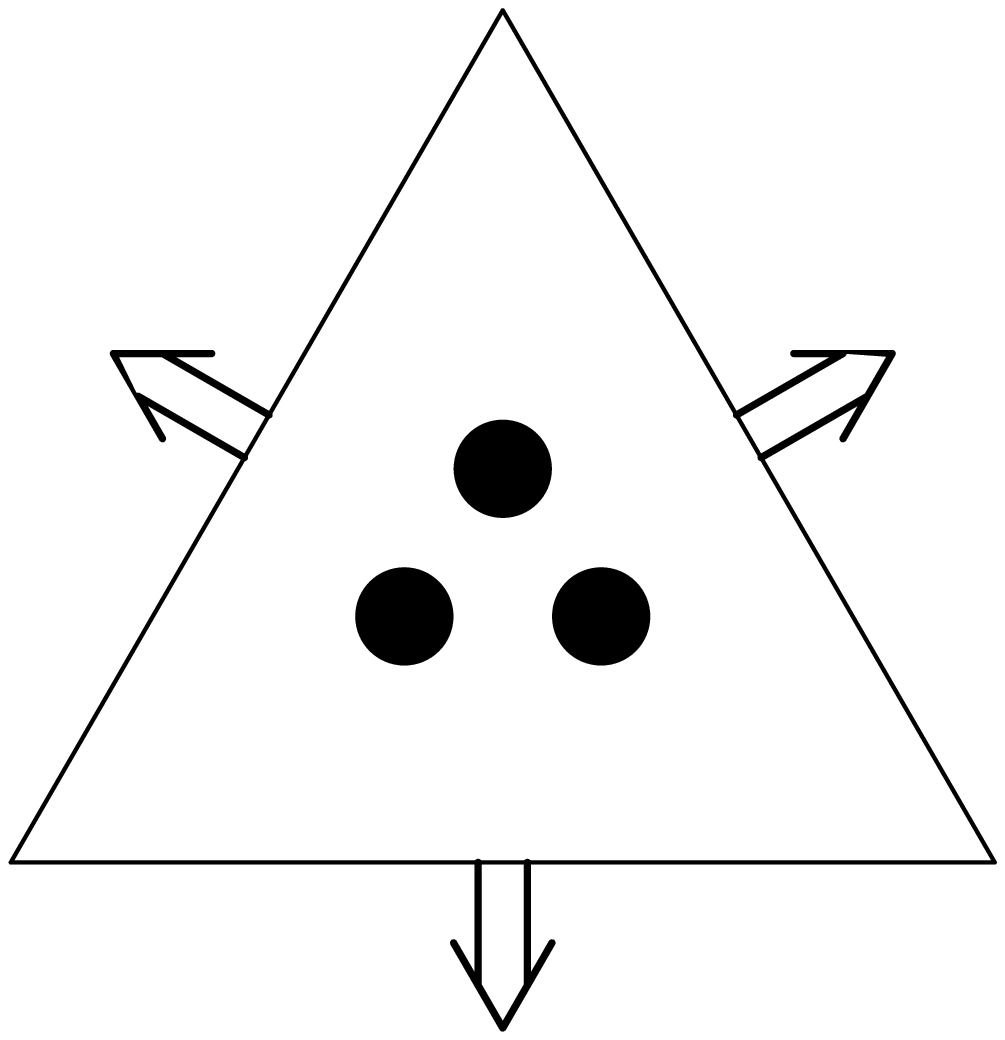} 
}%
\quad 
\subfloat[$k=1$]{\centering 
   \includegraphics[width=0.28\textwidth]{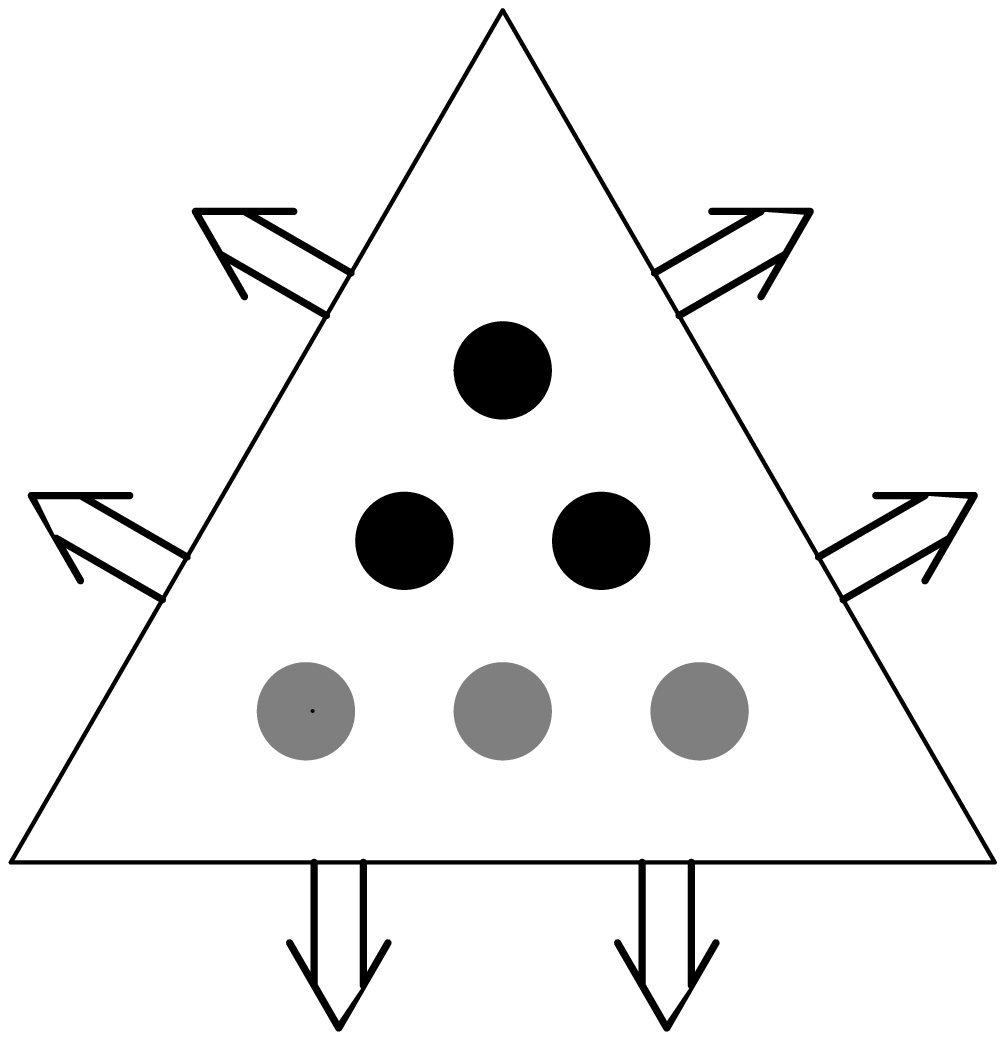} 
}
\caption{Element diagrams for $\tilde{\Sigma}_{k+1,h}^{(1)}$ in 2D \\ 
  gray circle: conforming div-bubble; 
  black circle: nonconforming div-bubble}
\label{fig:diagram-2D}
\end{figure}

\begin{figure}[!htbp]
\centering 
\captionsetup{justification=centering}
\subfloat[$k=0$]{
  \includegraphics[width=0.28\textwidth]{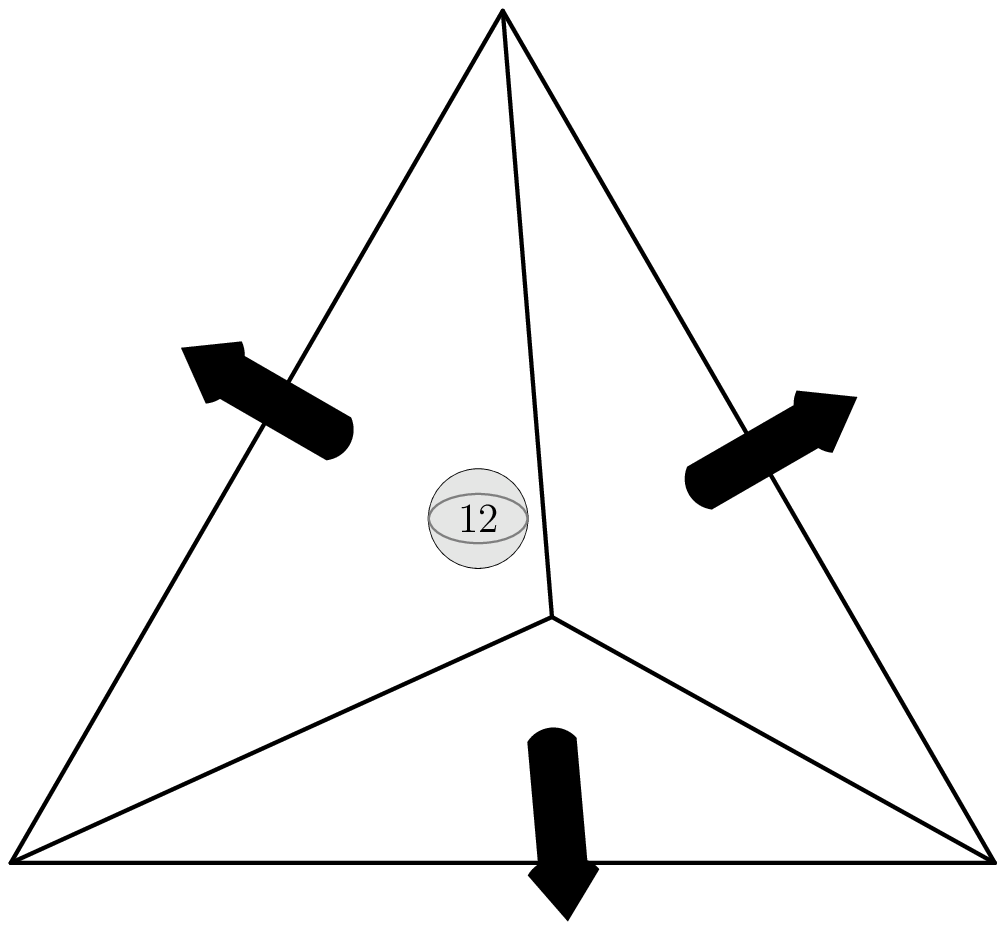} 
}%
\subfloat[$k=1$]{
  \includegraphics[width=0.28\textwidth]{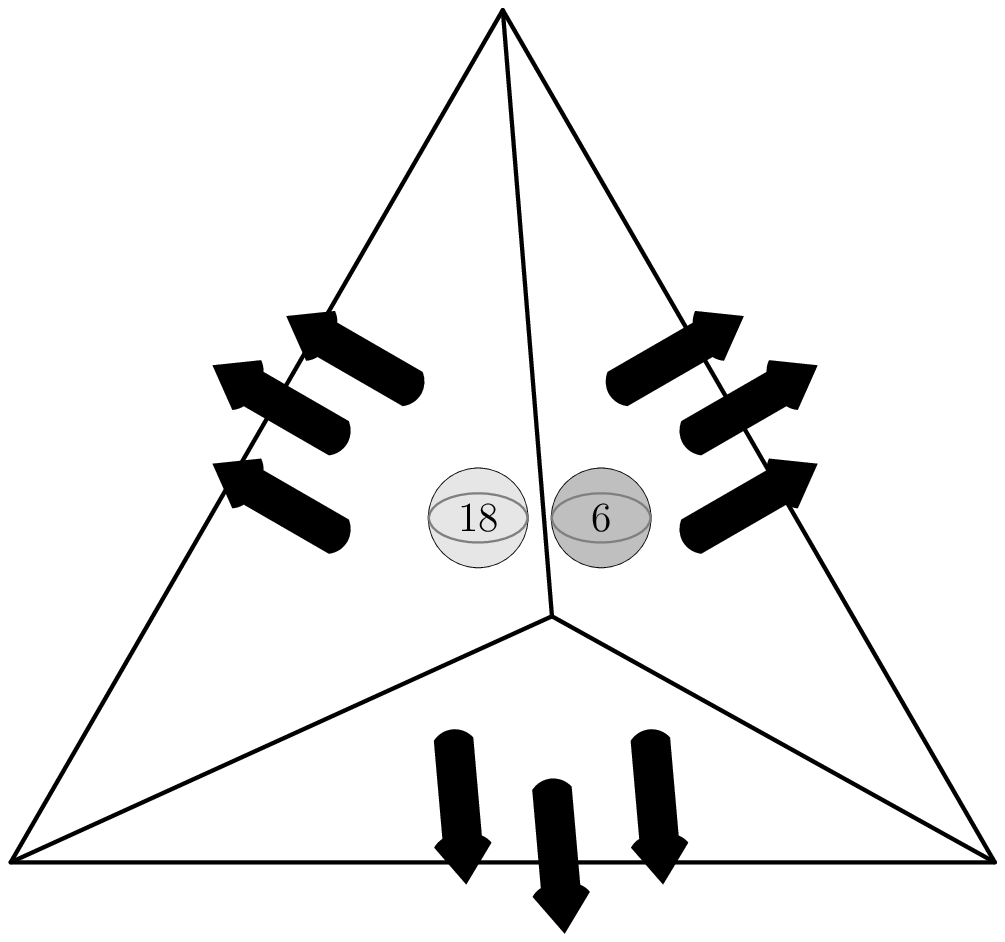} 
}%
\subfloat[$k=2$]{
  \includegraphics[width=0.28\textwidth]{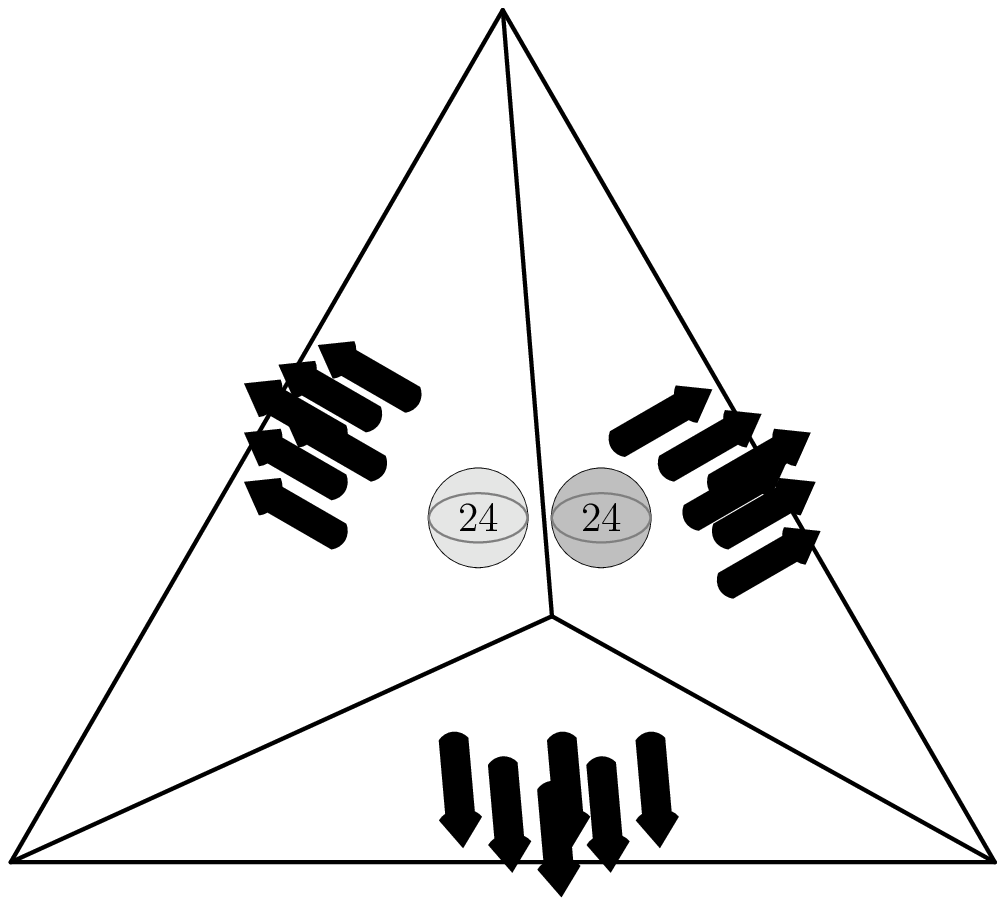}
}
\caption{Element diagrams for $\tilde{\Sigma}_{k+1,h}^{(1)}$ in 3D \\
  dark gray ball: conforming div-bubble; light gray ball:
  nonconforming div-bubble
}
\label{fig:diagram-3D}
\end{figure}

For consistency, an interior penalty term is added to the
bilinear form, which will improve the convergence order but not affect
the stability. One main advantage of these finite elements is their
convenience for implementation, since the basis functions of
nonconforming face-bubble function spaces can be written explicitly in
terms of the orthonormal polynomials. For the case that $k \leq n-1$,
we prove that the nonconforming elements have to be applied in the
framework that the degree of polynomials for stress are at most $k+1$.

\paragraph{Acknowledgement} The author would like to thank Professor
Jun Hu for the helpful discussions and suggestions.


\bibliographystyle{siam}
\bibliography{2016Nonconforming.bib}

\end{document}